%%%%%%%%%%%%%%%%%%%%% November 27, 2021 %%%%%%%%%%%%%%%%%%%%%

\documentclass[12pt,reqno]{amsart}

\usepackage{amsmath, amssymb, amsthm}

%%%%%%%%%%%%%%%%%%%%%%%%%%
%\usepackage{graphicx}
\usepackage[dvipdfmx]{graphicx}
%%%%%%%%%%%%%%%%%%%%%%%%%%%

\usepackage{xcolor}
\usepackage{hyperref}
\usepackage[numbers]{natbib}

%\copyrightinfo{2009}{American Mathematical Society}

%%%%%%%%%%%%%%%%%%%%%%%%%%%%%%%%%%%%%%%%%%%%%%%%%%

\oddsidemargin=2pt
\evensidemargin=2pt

\textwidth = 452pt
\textheight = 580pt

%%%%%%%%%%%%%%%%%%%%%%%%%%%%%%%%%%%%%%%%%%%%%%%%%%%%%%%%

\theoremstyle{plain}
\newtheorem{theorem}{Theorem}[section]
\newtheorem{lemma}[theorem]{Lemma}
\newtheorem{proposition}[theorem]{Proposition}
\newtheorem{corollary}[theorem]{Corollary}

\theoremstyle{remark}
\newtheorem{remark}{Remark}

\numberwithin{equation}{section}

%%%%%%%%%%%%%%%%%%%%%%%%%%%%%%%%%%%%%%%%%%%%%%%%%%%%%%%%%%%%

% Alphabets
\newcommand{\C}{\mathbb{C}}
\newcommand{\D}{\mathbb{D}}
\newcommand{\N}{\mathbb{N}}
\newcommand{\R}{\mathbb{R}}
\newcommand{\T}{\mathbb{T}}
\newcommand{\Z}{\mathbb{Z}}

% boldmath

% mathcal

% mathrm

% MathOperator

%%%%%%%%%%%%%%%%%%%%% Date %%%%%%%%%%%%%%%%%%%%%%%%%%%%%%%

\date{\today}

%%%%%%%%%%%%%%%%%%%%%%%%%%%%%%%%%%%%%%%%%%%%%%%%%%%%%%%%%%%%%%%%%

\title[\null]{Local approximations of inverse block Toeplitz matrices and Baxter-type theorems for long-memory processes}

\author[\null]{AKIHIKO INOUE and JUNHO YANG}

\address{A.\ Inoue\\
Department of Mathematics\\
Hiroshima University\\
Higashi-Hiroshima 739-8526\\
Japan.}
\email{inoue100@hiroshima-u.ac.jp}

\address{J.\ Yang\\
Institute of Statistical Science \\
Academia Sinica \\
Taipei 115 \\
Taiwan.}
\email{junhoyang@stat.sinica.edu.tw}

\begin{document}

\maketitle

\begin{abstract}
We derive sharp approximation error bounds for inverse block Toeplitz matrices associated with multivariate long-memory stationary processes.
The error bounds are evaluated for both column and row sums. These results are used to prove the strong convergence of the solutions of general block Toeplitz systems. A crucial part of the proof is to bound sums consisting of the Fourier coefficients of the phase function attached to the singular symbol of the Toeplitz matrices.

\vspace{0.5em}

\noindent{\it Keywords and phrases: Baxter-type theorem, Baxter's inequality, inverse Toeplitz matrix, Toeplitz system, multivariate long-memory processes, ARFIMA process.} 

\end{abstract}

%%%%%%%%%%%%%%%%%%%%%%%%%%%%%%%%%%%%%%%%%%%%%%%%%%%%%%%%%%%%%%%%%%%
%%%%             Section 1 
%%%%%%%%%%%%%%%%%%%%%%%%%%%%%%%%%%%%%%%%%%%%%%%%%%%%%%%%%%%%%%%%%%

\section{Introduction}\label{sec:1}

Let $q\in\N$ and let $\T:=\{z\in\C :\vert z\vert=1\}$ be the unit circle in $\C$. 
Consider a $\C^{q}$-valued second-order stationary time series $\{X_k:k\in\Z\}$ with spectral density $w:\T \to \C^{q\times q}$ and autocovariance function $\gamma: \Z \to \C^{q \times q}$. Thus, we have
\begin{equation}
\gamma(k)= \mathrm{Cov}(X_k, X_0) = 
\int_{-\pi}^{\pi}e^{-ik\theta}w(e^{i\theta})\frac{d\theta}{2\pi}, 
\quad k\in\Z.
\label{eq:gamma123}
\end{equation}
Now, we consider the semi-infinte block Toeplitz matrix
\begin{equation}
T_{\infty}(w) :=
\left(
\begin{matrix}
\gamma(0) & \gamma(-1) & \gamma(-2) & \cdots \cr
\gamma(1) & \gamma(0) & \gamma(-1) & \cdots \cr
\gamma(2) & \gamma(1) & \gamma(0) & \cdots \cr
\vdots & \vdots & \vdots & \ddots 
\end{matrix}
\right)
\label{eq:Tinfty135}
\end{equation}
with symbol $w$, which is the covariance matrix of $(X_1^\top, X_2^\top, \dots)^\top$.
For $n\in\N$, we also consider the truncated block Toeplitz matrix
\[
T_{n}(w) := (\gamma(s-t))_{1\leq s,t \leq n} \in \C^{qn \times qn},
\]
which is the covariance matrix of $(X_1^\top, \dots, X_n^\top)^\top$. 

%\begin{equation} \label{eq:Tn}
%T_n(w)
%:=\left(
%\begin{matrix}
%\gamma(0)   & \gamma(-1)  & \cdots & \gamma(-n+1)\cr
%\gamma(1)   & \gamma(0)   & \cdots & \gamma(-n+2)\cr
%\vdots      & \vdots      & \ddots & \vdots      \cr
%\gamma(n-1) & \gamma(n-2) & \cdots & \gamma(0)
%\end{matrix}
%\right)
%\in \C^{qn\times qn}
%\end{equation}

Covariance matrices and their inverses (also known as \textit{precision matrices}) are widely used in statistics and applied probability. Applications include quasi-Gaussian likelihood and graphical models, among others. 
For a survey, see \cite{p:gray-06}, and for a textbook treatment,  \cite{b:gre-58}. 
While the covariance matrix $T_n(w)$ has a simple (block) Toeplitz structure, its inverse $T_n(w)^{-1}$ poses theoretical and computational challenges.
% and the computation time is $O(n^2)$ when using a multivariate Levinson-Durbin algorithm (\cite{p:whi-63}). 
In contrast, the entries of the inverse $T_\infty(w)^{-1}$ of $T_\infty(w)$ in (\ref{eq:Tinfty135}) allow a simple closed-form expression in terms of the (backward) infinite-order autoregressive coefficients of $w$ (see (\ref{eq:Tinftyinv973}) and Section \ref{sec:4} below). 
Thus, $T_\infty(w)^{-1}$ provides a good theoretical and practical basis for studying the properties of $T_n(w)^{-1}$. In this context, it is important to investigate how well $T_{\infty}(w)^{-1}$ approximates $T_n(w)^{-1}$.

Let $\overline{\D}:=\{z\in\C :\vert z\vert\le 1\}$ be the closed unit disk in $\C$. 
In this paper, we restrict to the symbol $w$ satisfying the 
following condition (F$_d$) for $d \in (0,1/2)$:
\[
\mbox{$w(e^{i\theta})=|1-e^{i\theta}|^{-2d}g(e^{i\theta})g(e^{i\theta})^*$, 
where $g:\T\to \C^{q\times q}$ satisfies (C).}
\tag{F$_d$}
\]
Here, the condition (C) for $g:\T \to \C^{q\times q}$ is defined as follows:
\[
\begin{aligned}
&\mbox{the entries of $g(z)$ are rational functions in $z$ that have 
no poles in $\overline{\D}$, }\\
&\mbox{and $\det g(z)$ has no zeros in $\overline{\D}$.}
\end{aligned}
\tag{C}
\]
Thus, $w(e^{i\theta})$ has a unique singularity at $\theta=0$. 
In time series literature, the corresponding second-order stationary process $\{X_k\}$ with 
spectral density $w$ of the form (F$_d$) for $d\in (-1/2,1/2)\setminus \{0\}$ is called the $q$-variate \textit{autoregressive fractionally integrated moving-average} (ARFIMA) process or the \textit{vector ARFIMA} process. 
In particular, (F$_d$) for $d\in(0,1/2)$ implies that $\{X_k\}$ has \textit{long memory} in the sense that 
$\sum_{k=-\infty}^{\infty} \Vert \gamma(k) \Vert= \infty$ 
holds (see, e.g., \cite{I02} and \cite{IKP2}, and the references therein). 
Here, for $a \in \C^{q\times q}$, $\|a\|$ denotes the spectral norm of $a$. 

To investigate the error bound between $T_{\infty}(w)^{-1}$ and $T_n(w)^{-1}$ corresponding to the vector ARFIMA process, we focus on the column sums
\begin{equation}
C_{t,n} := \sum_{s=1}^{n} \Delta_n^{s,t}, 
\quad n \in \N,\ \ t \in \{1,\dots, n\},
\label{eq:RCn-a}
\end{equation}
where $\Delta_n^{s,t}$ denotes the following norm of the block-wise difference:
\begin{equation}
\Delta_n^{s,t} := \Vert (T_n(w)^{-1})^{s,t} - (T_{\infty}(w)^{-1})^{s,t}\Vert, 
\quad 
n \in \N,\ \ s, t \in \{1,\dots, n\}.
\label{eq:Delta}
\end{equation}
Here, for $n\in\N$, $A \in \C^{qn \times qn}$ and $s, t\in\{1,\dots,n\}$, 
we write $A^{s,t}\in \C^{q\times q}$ for the $(s,t)$ block of $A$; thus 
$A = (A^{s,t})_{1\le s, t\le n}$. Even if $A$ is a semi-infinite matrix 
like $T_{\infty}(w)^{-1}$, $A^{s,t}\in \C^{q\times q}$ similarly denotes the $(s,t)$ block of $A$.

%; thus $T_n(w)^{-1} = ((T_n(w)^{-1})^{s,t})_{1\le s, t\le n}$ and
%$T_{\infty}(w)^{-1} = ((T_{\infty}(w)^{-1})^{s,t})_{s, t\ge 1}$. 

Here is our main theorem that addresses the upper bounds of $C_{t,n}$.

\begin{theorem}\label{thm:1}
We assume (F${}_d$) for $d \in (0,1/2)$. Let $\delta \in (0,1)$. 
Then, there exists $M \in (0,\infty)$, which depends only on $w$ and $\delta$, such that
\begin{equation} 
C_{t,n} \le
\begin{cases}
M n^{-d} t ^{-d}, & n \in \N, ~~ t \in \{1, \dots, [\delta n]\}, \\
M n^{(1/2)-d} (n+1-t)^{-(1/2)-d}, & n \in \N, ~~ t \in \{ [\delta n]+1, \dots, n \}. 
\label{eq:col-summ1}
\end{cases}
\end{equation}
Moreover, it holds that
\begin{equation}
\sup_{n \in \N}\max_{t \in \{1, \dots ,n\}}C_{t,n} <\infty. 
\label{eq:col-summ2}
\end{equation}
\end{theorem}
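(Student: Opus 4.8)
The plan is to reduce everything to the known closed-form expression for $T_\infty(w)^{-1}$ in terms of the backward AR coefficients and to a corresponding finite-section formula for $T_n(w)^{-1}$. Under (F$_d$), $w$ factors as $|1-e^{i\theta}|^{-2d}gg^*$, so the AR coefficients are the Fourier (Taylor) coefficients of $(1-e^{i\theta})^{d}g(e^{i\theta})^{-1}$ up to a holomorphic unit; the fractional factor $(1-z)^d$ contributes coefficients of exact order $k^{-d-1}$, and the rational factor $g^{-1}$ contributes an exponentially decaying part. The difference $T_n(w)^{-1}-(T_\infty(w)^{-1})|_{n\times n}$ is governed by the \emph{phase function} of the symbol: writing $h=(T_\infty(w)^{-1})^{s,t}$ as a bilinear form in these AR coefficients, the corrective term for the finite section involves convolving the ``tail'' AR coefficients (those of index $>n$) against the Fourier coefficients of the phase function $\varphi$ attached to the singular symbol, as anticipated in the abstract. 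So the first step is to write $\Delta_n^{s,t}$ explicitly as (a norm of) such a double sum, separating the contribution of the smooth rational part $g$ (which will be exponentially small and harmless) from the contribution of the fractional singularity.

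Second, I would estimate the column sum $C_{t,n}=\sum_{s=1}^n\Delta_n^{s,t}$ by plugging in the model expression and performing the $s$-summation first. This turns $C_{t,n}$ into a single sum over the ``reflection index'' of products of the form $b_{n+\text{something}}\cdot(\text{sum over }j\text{ of }\varphi\text{-coefficients})$, where $b_k\asymp k^{-d-1}$ are the fractional AR coefficients. The two regimes in \eqref{eq:col-summ1} correspond to $t$ near the ``new'' end of the block (index $1$) versus $t$ near the ``old'' end (index $n$): in the first regime both factors of the bilinear form see the distance $t$ and the distance $n$, giving $n^{-d}t^{-d}$; in the second regime the relevant distances are $n+1-t$ (to the boundary) and $n$, and one picks up the extra $n^{1/2}$ from an $\ell^2$-type Cauchy–Schwarz step on the phase coefficients summed over a window of length $\sim n$, yielding $n^{1/2-d}(n+1-t)^{-1/2-d}$. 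The bound \eqref{eq:col-summ2} then follows because in the first regime $n^{-d}t^{-d}\le n^{-d}\le 1$ (as $t\ge1$ and $d>0$), and in the second regime $n^{1/2-d}(n+1-t)^{-1/2-d}$ is maximized at $t=n$, where it equals $n^{1/2-d}\le$ const only if $d\ge1/2$ — so actually one must be slightly more careful: for $t=n$ it is $n^{1/2-d}$, which is \emph{not} bounded; hence the correct reading is that near $t=n$ the sharper estimate $\Delta_n^{s,t}$ itself must already be summable, and the displayed bound should be combined with the trivial bound coming from the exponential decay of the $g$-part plus the fact that the fractional part contributes $\sum_s \asymp \sum_s (\dots)$ which telescopes; in writing the proof I would derive \eqref{eq:col-summ2} by taking the minimum of the two cases over the overlap and checking both exponents are $\le 0$ after the correct accounting.

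Concretely, the key steps in order are: (i) recall from Section~\ref{sec:4} the formula $(T_\infty(w)^{-1})^{s,t}=\sum_{r\ge0}\barp_{r+s-1}^{}{}^*\,\barp_{r+t-1}^{}$ (or its analogue) in terms of backward AR coefficients $\barp_k$, and the companion exact formula for $(T_n(w)^{-1})^{s,t}$ featuring the phase function $\varphi$; (ii) subtract to get $\Delta_n^{s,t}$ as a norm of a convolution of tail AR coefficients with Fourier coefficients $\widehat{\varphi}(j)$; (iii) insert the ARFIMA asymptotics $\|\barp_k\|\le M k^{-d-1}$ and the bound on $\sum|\widehat\varphi(j)|$ over length-$L$ windows (this is the ``crucial part'' flagged in the abstract, presumably proved as a lemma earlier or just below); (iv) sum over $s$, splitting at $s$ comparable to $t$ and to $n-t$, and estimate each piece by comparison with integrals $\int x^{-d-1}dx$; (v) read off the two regimes and then (vi) deduce \eqref{eq:col-summ2} by checking the resulting exponents. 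The main obstacle I expect is step (iii)–(iv): controlling the windowed sums of the phase-function Fourier coefficients sharply enough that the $s$-summation produces exactly $n^{-d}$ and $n^{1/2-d}$ and not something larger — this is where the Cauchy–Schwarz/$\ell^2$ estimate on $\widehat\varphi$ over a window must be tight, and where the interaction between the polynomial decay of $\barp_k$ and the near-$L^2$ (but not $L^1$) behaviour of $\varphi$ for long-memory $d$ is delicate. The exponential decay of the $g$-contribution is by contrast routine and will only affect the constant $M$.
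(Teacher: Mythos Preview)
Your outline has the right ingredients at the coarse level---the exact difference formula involving the phase-function coefficients, the AR asymptotics $\|\tilde a_k\|\lesssim k^{-1-d}$, and then summation---and this is indeed how the paper proceeds. But there are two places where the proposal is not yet a proof.

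First, and most seriously: your step (vi) does not work, and you already noticed this. The second line of \eqref{eq:col-summ1} gives $C_{n,n}\le M n^{1/2-d}$, which blows up, so \eqref{eq:col-summ2} cannot be read off from \eqref{eq:col-summ1}. Your proposed fixes (``telescoping'', ``exponential decay of the $g$-part'') do not rescue this: there is no telescoping in $\sum_s\Delta_n^{s,t}$, and the rational factor $g$ only affects constants, not the power of $n$. The paper's argument for \eqref{eq:col-summ2} uses a different idea, namely the time-reversal identity
\[
(T_n(w)^{-1})^{n+1-s,\,n+1-t}=(T_n(\tilde w)^{-1})^{s,t},\qquad \tilde w(e^{i\theta}):=w(e^{-i\theta}),
\]
together with the fact that $\tilde w$ again satisfies (F$_d$). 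This lets one control $\sum_s\|(T_n(w)^{-1})^{s,t}\|$ for $t>n/2$ by applying the \emph{first}-regime bound to $T_n(\tilde w)^{-1}$ at column $n+1-t\le n/2$; combined with the easy estimate $\sum_s\|(T_\infty(w)^{-1})^{s,t}\|\le(\sum_k\|\tilde a_k\|)^2$, this yields the uniform bound. Without this reflection trick your plan has a genuine gap at the boundary $t\approx n$.

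Second, your steps (iii)--(iv) are too vague to succeed as written. The difference $(T_n(w)^{-1}-T_\infty(w)^{-1})^{s,t}$ is not a single convolution with $\widehat\varphi$ but an \emph{infinite series} $\sum_{k\ge1}(\cdots)$ in which each term involves a $k$-fold iterated convolution with the phase coefficients $\beta_m$ (the $\tilde b^k_{n,u,\ell}$ of the paper). To sum this series one needs $\|\beta_m\|\sim \sin(\pi d)/(\pi m)$ and the fact that the integral operator $Tf(x)=\int_0^\infty\frac{f(u)}{x+u}\,du$ on $L^2(0,\infty)$ has norm exactly $\pi$: each level of the iteration then contributes a factor $r\sin(\pi d)<1$, and the geometric series converges. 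The $n^{1/2-d}$ in the second regime comes not from a windowed $\ell^2$ bound on $\widehat\varphi$ but from the $L^2$ norm $\|f_{n,s}\|_L\asymp (n/s)^{1/2+d}$ of the test function $\ell\mapsto\{\ell+(s+1)/n\}^{-1-d}$ against which $T^k\phi$ is paired. Your ``split at $s$ comparable to $t$ and $n-t$'' is not what is done; the $s$-sum is carried out in one stroke via these $L^2$ pairings. (Incidentally, your formula for $(T_\infty(w)^{-1})^{s,t}$ is off: it is the finite sum $\sum_{\ell=1}^{s\wedge t}\tilde a_{s-\ell}^*\tilde a_{t-\ell}$, not an infinite one.)
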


Theorem \ref{thm:1} is new even for $q=1$, i.e., the univariate ARFIMA model with $d\in (0,1/2)$. 
Due to Lemma \ref{lemma:CRequal333} below, Theorem \ref{thm:1} also holds for the row sums $R_{t,n}$ as in (\ref{eq:RCn-b}) replacing $C_{t,n}$.

We show some numerical results to give a better understanding of Theorem \ref{thm:1}. 
Let $w$ be the spectral density of a univariate ARFIMA$(0,d,0)$ process given by $w(e^{i\theta}) = |1-e^{i\theta}|^{-2d}$, $d \in (0,1/2)$.
% Then, the infinite-order forward and backward autoregressive coefficients $\{a_k\}$ and $\{\tilde{a}_k\}$ as in (\ref{eq:Wold}) and the autocovariances $\{\gamma(k)\}$ are given by
%\begin{equation*}
%\tilde{a}_k = a_k = -\frac{\Gamma(k-d)}{\Gamma(k+1) \Gamma(-d)}
%\end{equation*} and
%\begin{equation*}
%\gamma(k) = \gamma(-k) = \frac{(-1)^k \gamma(1-2d)}{\Gamma(1+k-d) \Gamma (1-k-d)}, 
%\quad k\in \N \cup \{0\},
%\end{equation*} where $\Gamma(\cdot)$ is the gamma function. 
Using expressions for the infinite-order autoregressive coefficients and the autocovariances as described in \cite{b:ber-13}, pages 47--50, we can derive explicit expressions for $T_n(w)$ and $T_{\infty}(w)^{-1}$ (see (\ref{eq:Tinftyinv973})) for the defined spectral density. The left panel of Figure \ref{fig:1} calculates, for each $d \in \{0.1, 0.2, 0.3, 0.4\}$,
the column-wise sums $C_{t,n}$ for $n=1,000$ and $t \in \{1,\dots,n\}$. The right panel computes $C_{1,n}$ (labled in circle) and $C_{n/2,n}$ (labeled in triangle) for each $n \in \{10,20, \dots, 1000\}$ and for $d=0.4$.
\begin{figure}[h]
\includegraphics[width=0.4\textwidth]{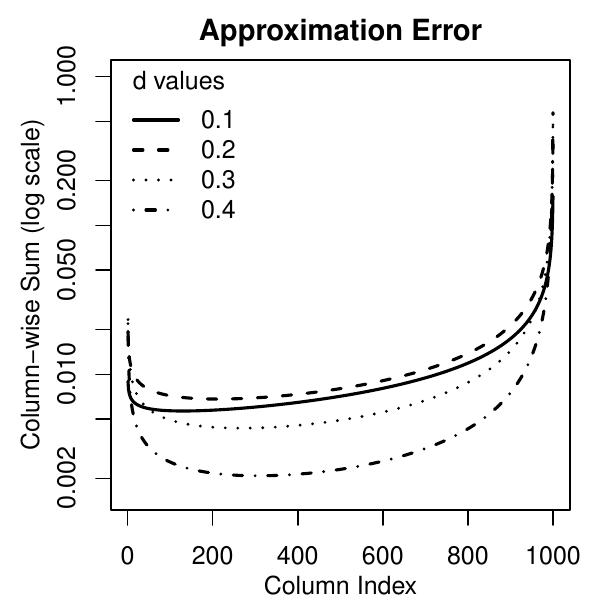}
\includegraphics[width=0.4\textwidth]{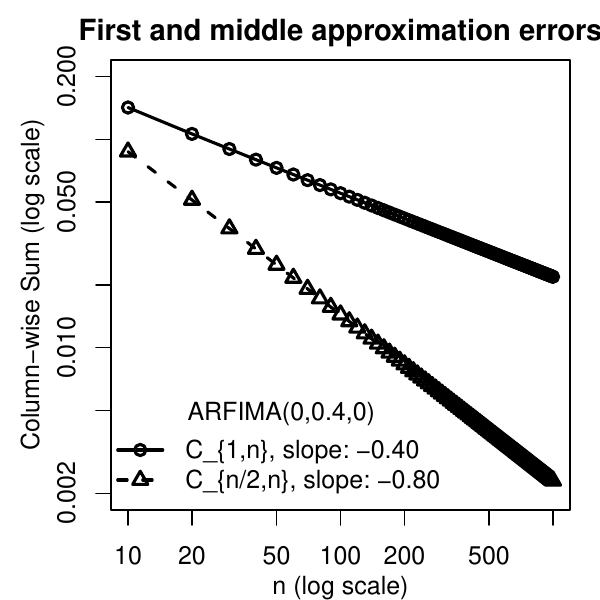}
\centering
\caption{Left: Plot of $C_{t,n}$ for the univariate ARFIMA$(0,d,0)$ spectral density function $w$. We set $n=1,000$ and $d \in \{0.1, 0.2, 0.3, 0.4\}$. Right: Log-log plot of $C_{1,n}$ (circle) and $C_{n/2,n}$ (triangle) for the univariate ARFIMA$(0,0.4,0)$ for each $n \in \{10,20,\dots,1000\}$.}
\label{fig:1}
\end{figure}
Note from (\ref{eq:col-summ1}) that for $t \ll n$, $C_{t,n}$ is uniformly bounded by $M n^{-d} t^{-d}$. This is consistent with the decreasing trend of $C_{t,n}$ as $t$($ \ll n$) increases, with the slope becoming steeper as $d$ increases. 
In the left panel of Figure \ref{fig:1}, when $t$ is close to $n$, $C_{t,n}$ increases, but remains bounded above, even for large $n$. 
This supports the uniform boundedness result in (\ref{eq:col-summ2}). Regarding the ``sharpness'' of the bounds derived in our results, the log-log plot in the right panel of Figure \ref{fig:1} shows that $\lim_{n\rightarrow \infty}n^{d} C_{1,n} = C_1 \in (0,\infty)$ and $\lim_{n\rightarrow \infty}n^{2d} C_{n/2,n} = C_2 \in (0,\infty)$. Thus, at least for the univariate ARFIMA$(0,d,0)$ model under consideration, 
the bounds in (\ref{eq:col-summ1}) appear to be tight.

We turn to the theoretical contributions of our main theorem. A noteworthy aspect of Theorem \ref{thm:1} is that it provides approximation error bounds of $T_n(w)^{-1}$ from a ``local'' perspective, which, as far as we know, is the first attempt. Prior studies on the approximation of inverse Toeplitz matrices include, for example, \cite{p:gal-74, p:sha-75} (for short-memory processes; see (\ref{eq:S}) below) and \cite{p:fox-87, p:dah-89, p:ing-16} (for long-memory processes). These studies derived error bounds in terms of the spectral or Frobenius norm, resulting in a ``global'' evaluation of the approximation.
 
The motivation for our local approximation approach stems from our attempt to prove the Baxter-type theorems for long-memory processes which is our main application to be elaborated below. Another application is alternative approximations of inverse Toeplitz matrices. Initially, we tried to obtain a bound on $\sum_{s,t=1}^{n} \Delta_{n}^{s,t}$, but it seemed that such global approximation schemes only work for the short-memory framework, but not for the long-memory framework. We finally realized that the local bounds in (\ref{eq:col-summ1}) and (\ref{eq:col-summ2}) are crucial for evaluating the approximation errors of finite section Wiener--Hopf solutions for long-memory processes, and thus for proving the desired Baxter-type theorems.

We now elaborate Baxter-type theorems. Let
$\C^{\infty\times q}:=\{ \mathbf{y} = (y_1^\top, y_2^\top, \dots)^\top: y_k\in \C^{q\times q}, k \in \N\}$.
%Let
%\begin{equation}
%\C^{\infty\times q} 
%:=\left\{ \mathbf{y} = (y_1^\top, y_2^\top, \dots)^\top: 
%\mbox{$y_k\in \C^{q\times q}$ for $k\in\N$}\right\}.
%\label{eq:Yq123}
%\end{equation}
For an output sequence 
$\mathbf{y} = (y_1^\top, y_2^\top,\dots)^\top \in \C^{\infty\times q}$, we write $\mathbf{y}_n$ for the truncation of $\mathbf{y}$ defined by
\begin{equation}
\mathbf{y}_n := (y_1^\top, \dots, y_n^\top)^\top \in \C^{nq\times q}, \quad n \in \N.
\label{eq:yn234}
\end{equation}
Let 
$\mathbf{z} = (z_1^{\top},z_2^{\top},\dots)^{\top} \in \C^{\infty\times q}$ 
be the solution to the semi-infinite block Toeplitz system
\begin{equation}
T_{\infty}(w)\mathbf{z} = \mathbf{y}.
\label{eq:tsysinf123}
\end{equation}
The system of equations (\ref{eq:tsysinf123}) is called the \textit{Wiener--Hopf equation} (\cite{b:wie-49}).
Let 
$\mathbf{z}_n = (z_{1,n}^\top, \dots, z_{n,n}^\top)^\top \in \C^{nq\times q}$ be the solution to 
the finite section Wiener--Hopf equation
\begin{equation}
T_n(w) \mathbf{z}_n = \mathbf{y}_n.
\label{eq:tsysn123}
\end{equation}
In this situation, we are interested in whether
\begin{equation}
\lim_{n\to\infty} \sum_{k=1}^{n} \|z_{k} - z_{k,n}\| = 0
%\sum_{k=1}^{n} \|z_{k} - z_{k,n}\| 
\label{eq:Baxter}
\end{equation}
holds and, if (\ref{eq:Baxter}) holds, the rate of convergence. 
Results of the form (\ref{eq:Baxter}) and the rates of convergence are of great importance in the prediction theory of time series. For example, 
they are closely related to the mean-squared prediction errors (see \cite{p:pou-89} and more recently \cite{p:yang-24}). To put our results in historical perspective, we call results of the form (\ref{eq:Baxter}) {\it Baxter-type theorems}. 
See Baxter's inequality (\ref{eq:Baxter597}) below, which gives a prototype result. 

A Baxter-type theorem for non-singular $w$ has been proved in \cite{I23}, where it is assumed that 
$\mathbf{y}$ is such that $\sum_{k=1}^{\infty} \|y_k\| <\infty$ and that 
$w$, instead of ($\mathrm{F}_d$), satisfies the following condition:
\begin{equation}
\begin{aligned}
&\mbox{$\int_{-\pi}^{\pi} \Vert w(e^{i\theta})\Vert d\theta<\infty$, 
$w(e^{i\theta})$ is a positive Hermitian matrix for $\theta\in (-\pi,\pi]$,}\\
&\mbox{$\sum_{k=-\infty}^{\infty} \Vert \gamma(k)\Vert <\infty$, 
and $\min_{z\in\T}\det w(z)>0$.}
\end{aligned}
\label{eq:S}
\end{equation}
The condition (\ref{eq:S}) implies that the corresponding $q$-variate second-order stationary 
process has {\it short memory}. 
%Since Baxter's inequality itself is also proved for $w$ satisfying ($\mathrm{F}_d$) with $d\in (0,1/2)$ in 
%\cite{p:ino-kas-06, IKP2}, it will be natural to expect that some Baxter-type theorems exist for this class of $w$ as well.
However, the techniques used to prove the Baxter-type thoerem for short memory processes cannot be readily applied to long-memory processes. This is because \cite{I23} takes full advantage of the fact that the sequence of infinite-order moving-average coefficients of $w$ is absolutely summable, a condition not satisfied by $w$ satisfying (F$_d$) with $d \in (0,1/2)$. 

It turns out that to prove Baxter-type theorems for long-memory processes we need a more subtle approach to 
bound the difference $\|z_{k}-z_{k,n}\|$, where the local approximation bounds derived in Theorem \ref{thm:1} play a key role. 
Interestingly, our local approximation approach allows us to derive the convergence rate of 
(\ref{eq:Baxter}). Note that the method in \cite{I23}, which uses the dominated convergence theorem, 
does not allow us to obtain the convergence rate even for short-memory processes.

%For more details, including rates of convergence, .

%In Section \ref{sec:appl1}, we prove (\ref{eq:Baxter}) for the symbol $w$ that satisfies (F$_d$) with $d\in (0,1/2)$
%and the output sequence $\mathbf{y}$ belonging to either $\mathcal{A}_{\rho}^{\infty \times q}$ with 
%$\rho \in (1-2d, \infty)$ or $\mathcal{B}^{\infty \times q}$, where, for $\rho\in (0,\infty)$, 
%\begin{align}
%\mathcal{A}_{\rho}^{\infty \times q} 
%&:= \left\{ \mathbf{y} = (y_1^\top, y_2^\top, \dots)^\top \in \C^{\infty\times q}: 
%\sup_{k \in \N}  k^{\rho} \|y_k\|   <\infty \right\},
%\label{eq:Arho123}\\
%\mathcal{B}^{\infty \times q} &
%:= \left\{ \mathbf{y} = (y_1^\top, y_2^\top, \dots)^\top \in \C^{\infty\times q}: 
%\sum_{k=1}^{\infty} \|y_k\| <\infty \right\}.
%\label{eq:B123}
%\end{align}

This paper is organized as follows. 
In Section \ref{sec:3}, we present six theorems (Theorems \ref{thm:Bound1}--\ref{thm:Bound6}) 
on bounds of $R_{s,n}$ and $C_{t,n}$, and using them, we prove Theorem \ref{thm:1}. 
Section \ref{sec:appl} discusses two applications of Theorem \ref{thm:1}: two Baxter-type theorems 
for vector ARFIMA processes with $d\in (0,1/2)$ 
(Section \ref{sec:baxter}) and alternative approximations of inverse Toeplitz matrices (Section \ref{sec:alt}).
Sections \ref{sec:4}--\ref{sec:10} cover the technical details. 
In Section \ref{sec:4}, we derive a key bound for $\Delta_n^{s,t}$. 
In Section \ref{sec:5}, we prove some results on the bounds for sums consisting of the Fourier coefficients of the phase function associated with $w$. 
In Section \ref{sec:6}, we prove the result on the bound for $R_{s,n}$ (Theorem \ref{thm:Bound1}). 
In Section \ref{sec:7}, we prove a result on the bound for $C_{t,n}$ (Theorem \ref{thm:Bound3}). 
In Section \ref{sec:8}, we prove another result on the bound for $C_{t,n}$ (Theorem \ref{thm:Bound5}).
In in Section \ref{sec:9}, we prove the two Baxter-type theorems given in Section \ref{sec:3}. 
Finally, in Section \ref{sec:10}, we prove Theorem \ref{thm:Omega}.

%In Section \ref{sec:4}, we prove a key estimate for $\Delta_n^{s,t}$ in (\ref{eq:Delta}). 
%The main ingredients of the upper bound of the estimate are the Fourier coefficients $\beta_k$ of the phase function attached to $w$. 
%In Section \ref{sec:5}, we prove some estimates for sums consisting of $\beta_k$'s, 
%which we need in Sections \ref{sec:6} and \ref{sec:7}. 
%In Section \ref{sec:6}, we prove Theorem \ref{thm:Bound1} which gives an estimate of $R_{s,n}$ in (\ref{eq:RCn-b}). 
%In Section \ref{sec:7}, we prove Theorem \ref{thm:Bound3} which gives an estimate of $C_{t,n}$ in (\ref{eq:RCn-a}). 
%In Section \ref{sec:8}, we prove Theorem \ref{thm:Bound5} which gives a uniform estimate of  $C_{t,n}$. 
%Finally in Section \ref{sec:9}, we prove  Theorems \ref{thm:B-type1} and \ref{thm:B-type2} under ($\mathrm{F}_d$) with 
%$d\in (0,1/2)$, which are the Baxter type theorems for 
%$\mathcal{A}_{\rho}^{\infty \times q}$ with 
%$\rho \in (1-2d, \infty)$ and $\mathcal{B}^{\infty \times q}$, respectively.

%%%%%%%%%%%%%%%%%%%%%%%%%%%%%%%%%%%%%%%%%%%%%%%%%
%%             Section 2
%%%%%%%%%%%%%%%%%%%%%%%%%%%%%%%%%%%%%%%%%%%%%%%%

\section{Bounds for sums of $\Delta_n^{s,t}$ and proof of Theorem \ref{thm:1}}\label{sec:3}

In this section, we present some results on the upper bounds of sums of $\Delta_n^{s,t}$ in (\ref{eq:Delta}). 
To do so, we also need to consider the row sums
\begin{equation} 
R_{s,n} := \sum_{t=1}^{n} \Delta_n^{s,t}, 
\quad 
n \in \N,\ \ s \in \{1,\dots, n\}.
\label{eq:RCn-b}
\end{equation}
By the next lemma, we have $R_{t,n} = C_{t,n}$;
however, depending on the situation, we either equate or differentiate between $R_{t,n}$ and $C_{t,n}$. 

\begin{lemma}\label{lemma:CRequal333}
%We assume (F${}_d$) for $d\in (0,1/2)$. Then 
$R_{t,n} = C_{t,n}$ for $n\in \N$ and $t\in \{1, \dots,n\}$.
\end{lemma}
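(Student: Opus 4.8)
The plan is to exploit a symmetry relating $T_n(w)^{-1}$ and $T_\infty(w)^{-1}$ under block-transposition and block-reversal. First observe that the autocovariance function satisfies $\gamma(-k) = \gamma(k)^*$ because $w(e^{i\theta})$ is Hermitian; more is true here, since under (F$_d$) with $g$ rational, $w$ is symmetric under $\theta \mapsto -\theta$ in a way that makes $\gamma(k)^\top = \gamma(-k)$ as well — but what we really need is just the structural identity at the level of blocks. The key point is that both $T_n(w)$ and $T_\infty(w)$ are block-Toeplitz, so their $(s,t)$ blocks depend only on $s-t$. Writing $A_n := T_n(w)^{-1}$ and $A_\infty := T_\infty(w)^{-1}$, I would show that $\Delta_n^{s,t} = \|A_n^{s,t} - A_\infty^{s,t}\|$ is invariant under the swap $(s,t) \mapsto (t,s)$, which immediately gives $R_{t,n} = \sum_s \Delta_n^{t,s} = \sum_s \Delta_n^{s,t} = C_{t,n}$.

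To establish $\Delta_n^{s,t} = \Delta_n^{t,s}$, the cleanest route is via the \emph{persymmetry} of block Toeplitz matrices. Let $J_n$ denote the $qn \times qn$ block-flip matrix whose $(s,t)$ block is $I_q$ if $s + t = n+1$ and $O_q$ otherwise. Then for any block-Toeplitz $T_n$ one has $J_n T_n J_n = T_n^\top$ (flipping rows and columns of a Toeplitz matrix transposes it), hence $J_n T_n(w)^{-1} J_n = (T_n(w)^{-1})^\top$, i.e. $(A_n^{s,t})^\top = A_n^{\,n+1-s,\,n+1-t}$. The same identity holds for the semi-infinite matrix in a suitable limiting sense, but to avoid convergence subtleties with $J_\infty$ I would instead use the Hermitian route: since $w$ is Hermitian, $T_n(w)$ is Hermitian, so $A_n$ is Hermitian, giving $\overline{A_n^{s,t}} = (A_n^{t,s})^\top$ and hence $\|A_n^{s,t}\| = \|A_n^{t,s}\|$ directly — and likewise $\|A_\infty^{s,t}\| = \|A_\infty^{t,s}\|$. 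But we need this for the \emph{difference}, so I would argue that $\overline{A_n^{s,t} - A_\infty^{s,t}} = (A_n^{t,s} - A_\infty^{t,s})^\top$, which follows from the Hermitian symmetry of \emph{both} $T_n(w)$ and $T_\infty(w)$ (their inverses are Hermitian, so the $(s,t)$ block of each inverse is the conjugate-transpose of its $(t,s)$ block). Taking spectral norms, which are invariant under both conjugation and transposition, yields $\Delta_n^{s,t} = \Delta_n^{t,s}$.

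The one point requiring care — and the main (small) obstacle — is the well-definedness of $T_\infty(w)^{-1}$ and the claim that it is Hermitian: one must know that $T_\infty(w)$, viewed as a bounded operator on $\ell^2(\N; \C^q)$ (bounded because $w \in L^\infty$ away from the singularity, but here $w$ is \emph{not} bounded since $d > 0$), is invertible with bounded inverse, or else interpret $T_\infty(w)^{-1}$ through the closed-form AR-coefficient expression promised in the introduction (equation (\ref{eq:Tinftyinv973})). I would invoke whichever formulation the paper has set up in Section \ref{sec:4}; granting that $T_\infty(w)^{-1}$ is a well-defined Hermitian semi-infinite matrix, the Hermitian symmetry of its blocks, $(A_\infty^{s,t})^* = A_\infty^{t,s}$, is immediate, and the argument closes. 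Everything else is a one-line computation with spectral norms.
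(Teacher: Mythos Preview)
Your proposal is correct and, once you settle on the Hermitian route, it is exactly the paper's argument: both $T_n(w)$ and $T_\infty(w)$ are Hermitian, hence so are their inverses, giving $(A_n^{s,t})^* = A_n^{t,s}$ and $(A_\infty^{s,t})^* = A_\infty^{t,s}$; taking the difference and then spectral norms yields $\Delta_n^{s,t} = \Delta_n^{t,s}$. The persymmetry detour via $J_n$ is unnecessary and can be dropped entirely.
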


\begin{proof}
Since $T_n(w)$, hence $T_n(w)^{-1}$, is a Hermitian matrix, we have 
$(T_n(w)^{-1})^{s,t} = ((T_n(w)^{-1})^{t,s})^*$ for $s, t \in \{1,\dots,n\}$. 
Similarly, we have 
\begin{equation}
(T_{\infty}(w)^{-1})^{s,t} = ((T_{\infty}(w)^{-1})^{t,s})^*, \quad 
s, t \in \{1,\dots,n\}.
\label{eq:Tinvsa333}
\end{equation}
Therefore, the assertion holds.
\end{proof}

The next two theorems give bounds for the row sums $R_{s,n}$.

\begin{theorem} \label{thm:Bound1}
We assume (F$_{d}$) for $d\in (0,1/2)$.  Then, there exist $M_1 \in (0, \infty)$, 
which depends only on $w$, such that
\begin{equation}
R_{s,n} \leq
M_1  n^{(1/2)-d} \left\{ (n+1-s)^{-(1/2)-d} + s^{-(1/2)-d} \right\}, \quad 
n \in \N, \ \ s \in \{1, \dots,n\}.
\label{eq:Tnwtt}
\end{equation}
\end{theorem}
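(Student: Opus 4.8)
The plan is to reduce Theorem~\ref{thm:Bound1} to the blockwise estimate for $\Delta_n^{s,t}$ obtained in Section~\ref{sec:4}, and then to sum that estimate over $t\in\{1,\dots,n\}$ using the bounds for sums of Fourier coefficients of the phase function of $w$ established in Section~\ref{sec:5}; the genuine difficulty has already been front-loaded into those two sections, and what remains is a careful summation with case analysis.

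First I would recall that $T_\infty(w)^{-1}$ has the closed form (\ref{eq:Tinftyinv973}) in terms of the backward infinite-order autoregressive coefficients attached to $w$, whereas $T_n(w)^{-1}$ obeys a Gohberg--Semencul-type (equivalently, finite-predictor) representation. Subtracting, one sees that $(T_n(w)^{-1})^{s,t}-(T_\infty(w)^{-1})^{s,t}$ splits into a part built from tails of the autoregressive series, truncated at a level governed by $n-\max(s,t)$, which reflects the distance of the blocks from the bottom edge $n$, and a part built from the finite-minus-infinite predictor errors together with the ``backward correction'' terms of the Gohberg--Semencul formula, which reflects the distance from the top edge $1$. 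Under (F$_d$) these coefficients and their convolutions are expressible through the phase function of the singular symbol $w$, and Section~\ref{sec:5} supplies the required power-type bounds on the relevant sums of its Fourier coefficients; combined with the elementary decay of the moving-average and autoregressive coefficients of $w$, this gives the key bound of Section~\ref{sec:4}, of the shape $\Delta_n^{s,t}\le \Delta^{(1)}_n+\Delta^{(2)}_n$, each summand being a sum, over an auxiliary index, of products of one $s$-dependent and one $t$-dependent factor.

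Next I would fix $s$ and carry out $\sum_{t=1}^{n}$ of this bound. Since each summand factorizes, the sum over $t$ can be performed first; splitting $\{1,\dots,n\}$ into a bulk region and the two boundary layers $t\asymp 1$ and $t\asymp n$, replacing sums by integrals, and inserting the estimates of Section~\ref{sec:5}, one obtains after reassembling the two contributions $n^{(1/2)-d}(n+1-s)^{-(1/2)-d}$ and $n^{(1/2)-d}s^{-(1/2)-d}$ appearing in (\ref{eq:Tnwtt}): the factor $n^{(1/2)-d}$ is the signature of summing $\asymp n$ terms each of order $n^{-(1/2)-d}$, while the remaining power records the decay of the $s$-dependent factor. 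The reflection $k\mapsto n+1-k$, under which $T_n(w)$ turns into $T_n(\widetilde w)$ with $\widetilde w(z):=w(1/z)$, a symbol which again satisfies (F$_d$) (now with a spectral factor in place of $g$), interchanges the two endpoints and so lets one deduce one of the two contributions from the other, halving the casework.

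The main obstacle is not the algebra of the Gohberg--Semencul representation but obtaining the sums over $t$ with the \emph{sharp} exponent $-(1/2)-d$ on both $(n+1-s)$ and $s$, \emph{uniformly} in $s\in\{1,\dots,n\}$ and $n\in\N$; this forces one to exploit the precise asymptotics, rather than crude absolute-value bounds, for the Fourier coefficients of the phase function of $w$, which is exactly the purpose of Section~\ref{sec:5}. The matrix (non-commutative) nature of the problem contributes only routine bookkeeping, everything being carried out at the level of spectral norms via submultiplicativity and the boundedness of $g^{\pm1}$ on $\overline{\D}$.
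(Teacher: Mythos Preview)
Your high-level outline (invoke the key pointwise bound of Section~\ref{sec:4}, then sum over $t$ using the phase-function estimates of Section~\ref{sec:5}) matches the paper's strategy, but two essential ingredients are missing, and one step is misdirected. First, the bound of Theorem~\ref{thm:Tnbound} is an \emph{infinite} sum over the recursion depth $k$, and you never explain why $\sum_{k\ge 1}(\cdots)$ converges. In the paper this is the heart of Lemma~\ref{lemma:Sestim321a}: after rescaling, the $k$-th term is controlled by an inner product $(\psi,T^k f_{n,s})_L$ in $L^2((0,\infty))$, where $T$ is the Hilbert integral operator $Tf(x)=\int_0^\infty (x+u)^{-1}f(u)\,du$ with operator norm exactly $\pi$; combined with the constant $(r\sin(\pi d)/\pi)^{k-1}$ coming from Proposition~\ref{prop:beta123}(ii), this yields geometric decay $(r\sin(\pi d))^{k-1}$ with $r\sin(\pi d)<1$, and the exponent $(1/2)+d$ arises from $\|f_{n,s}\|_L$. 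A case split of $\{1,\dots,n\}$ into ``bulk'' and ``boundary layers'' does not produce this cancellation; without the $L^2$ duality and Hardy's inequality you will not get the sharp exponent uniformly in $s$, nor summability in $k$.

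Second, your reflection argument does not apply here. The identity $(T_n(w)^{-1})^{n+1-s,n+1-t}=(T_n(\tilde w)^{-1})^{s,t}$ holds for the \emph{finite} inverse, but $T_\infty(w)^{-1}$ has no such symmetry (there is no ``$n$'' in it), so $\Delta_n^{s,t}$ is \emph{not} carried to $\tilde\Delta_n^{n+1-s,n+1-t}$. In the paper the two contributions $(n+1-s)^{-(1/2)-d}$ and $s^{-(1/2)-d}$ come not from reflection but from the two families of terms in (\ref{eq:key-estimate}): the odd-$k$ terms involve $a_{n+1-s+\ell}$ and feed $S_{1,k}(n,n-s,\cdot)$, $S_{2,k}(n,n-s,\cdot)$, giving the $(n+1-s)$ factor, while the even-$k$ terms involve $\tilde a_{s+\ell}$ and feed $S_{1,k}(n,s-1,\cdot)$, $S_{2,k}(n,s-1,\cdot)$, giving the $s$ factor. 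Both are handled by one and the same Lemma~\ref{lemma:Sestim321a}, with no case analysis in $t$. The reflection device is used only later, in Section~\ref{sec:8}, for Theorems~\ref{thm:Bound5} and~\ref{thm:Bound6}.
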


The proof of Theorem \ref{thm:Bound1} will be given in Section \ref{sec:6}.

For $x \in \R$, let $[x]$ be the floor function or the greatest integer 
less than or equal to $x$.

\begin{theorem}\label{thm:Bound2}
We assume (F${}_d$) for $d\in (0,1/2)$. Let $\delta \in (0,1)$. 
Then there exists $M_2 \in (0,\infty)$, which depends only on $w$ and $\delta$, 
such that
\begin{equation}
R_{t,n}
\le M_2 n^{(1/2)-d} (n+1-t)^{-(1/2)-d}, \quad
n\in\N,\ \ t \in \{ [\delta n]+1,\dots, n \}.
\label{eq:col-sum2}
\end{equation}
\end{theorem}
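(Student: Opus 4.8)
The plan is to obtain Theorem~\ref{thm:Bound2} as an immediate consequence of Theorem~\ref{thm:Bound1}: on the restricted range $t\in\{[\delta n]+1,\dots,n\}$, the index $t$ is comparable to $n$, so the term $s^{-(1/2)-d}$ in the two-term bound (\ref{eq:Tnwtt}) is absorbed into the term $(n+1-s)^{-(1/2)-d}$ at the cost of a $\delta$-dependent constant.

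First I would record two elementary inequalities valid for $n\in\N$ and $t\in\{[\delta n]+1,\dots,n\}$. Since $[\delta n]$ is the largest integer not exceeding $\delta n$, we have $[\delta n]+1>\delta n$, whence $t\ge[\delta n]+1>\delta n$ and therefore
\[
t^{-(1/2)-d}<(\delta n)^{-(1/2)-d}=\delta^{-(1/2)-d}\,n^{-(1/2)-d}.
\]
On the other hand $t\ge1$ gives $n+1-t\le n$, and since $-(1/2)-d<0$ this yields $n^{-(1/2)-d}\le(n+1-t)^{-(1/2)-d}$. Combining the two, $t^{-(1/2)-d}\le\delta^{-(1/2)-d}(n+1-t)^{-(1/2)-d}$.

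Next I would apply Theorem~\ref{thm:Bound1} with $s=t$ and insert the last inequality into (\ref{eq:Tnwtt}):
\[
R_{t,n}\le M_1 n^{(1/2)-d}\bigl\{(n+1-t)^{-(1/2)-d}+\delta^{-(1/2)-d}(n+1-t)^{-(1/2)-d}\bigr\}
= M_1\bigl(1+\delta^{-(1/2)-d}\bigr)\,n^{(1/2)-d}(n+1-t)^{-(1/2)-d}.
\]
This is exactly (\ref{eq:col-sum2}) with $M_2:=M_1(1+\delta^{-(1/2)-d})$, which depends only on $w$ (through $M_1$ and $d$) and on $\delta$.

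I do not expect any real obstacle here: the only subtlety is the floor estimate $[\delta n]+1>\delta n$, and the fact that $M_2\to\infty$ as $\delta\downarrow0$ — which is harmless since $\delta$ is fixed and the statement permits $M_2$ to depend on $\delta$. All of the genuine difficulty of the row-sum bound is concentrated in Theorem~\ref{thm:Bound1}, whose proof is deferred to Section~\ref{sec:6}.
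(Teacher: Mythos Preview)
Your proof is correct and follows the same approach as the paper: derive (\ref{eq:col-sum2}) directly from Theorem~\ref{thm:Bound1} by absorbing the $t^{-(1/2)-d}$ term into $(n+1-t)^{-(1/2)-d}$ using that $t>\delta n$ on the restricted range. The paper uses the slightly different comparison $t^{-1}\le r(n+1-t)^{-1}$ with $r=(2/\delta)-1$, but this is an inessential variation of the same elementary estimate.
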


\begin{proof}
Let $\delta \in (0,1)$ and 
$r := (2/\delta) -1 > 0$. Then we easily find that
$t^{-1} \leq r(n+1-t)^{-1}$ for $n \in \N$ and $t\in \{ [\delta n]+1, \dots, n\}$. 
Therefore, Theorem \ref{thm:Bound1} yields, 
for $n\in \N$ and $ t\in \{ [\delta n]+1, \dots, n\}$, 
\[
R_{t,n} 
\leq M_1 \{r^{(1/2)+d}+1\} n^{(1/2)-d} (n+1-t)^{-(1/2)-d}. 
\]
%This and Lemma \ref{lemma:CRequal333} show that 
Therefore, 
(\ref{eq:col-sum2}) holds with $M_2 = M_1 \{r^{(1/2)+d}+1\}$.
\end{proof}

The next three theorems give bounds for the column sums $C_{t,n}$.

\begin{theorem}\label{thm:Bound3}
We assume (F$_{d}$) for $d\in (0,1/2)$.  Then there exist $M_3 \in (0, \infty)$, 
which depends only on $w$, such that
\begin{equation}
C_{t,n}
\leq M_3 n^{1-d} t^{-d} (n+1-t)^{-1}, \quad n \in \N, \ \ t \in\{1, \dots,n\}.
\label{eq:Tnwss}
\end{equation}
\end{theorem}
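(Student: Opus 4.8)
The plan is to deduce Theorem~\ref{thm:Bound3} from the pointwise ``key bound'' for $\Delta_n^{s,t}$ proved in Section~\ref{sec:4}, combined with the estimates on phase-function sums from Section~\ref{sec:5}. By the explicit formula for $T_\infty(w)^{-1}$ (see (\ref{eq:Tinftyinv973}) and Section~\ref{sec:4}) and the factorization of $w$ from (F$_d$), the block-wise difference $(T_n(w)^{-1})^{s,t}-(T_\infty(w)^{-1})^{s,t}$ is expressed through the autoregressive and moving-average coefficients attached to the regular factor $g$ — which, by (C), decay geometrically up to a polynomial and in particular are absolutely summable — and the Fourier coefficients $\widehat\varphi(k)$ of the phase function $\varphi$ associated with the singular factor $|1-e^{i\theta}|^{-2d}$. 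The key bound of Section~\ref{sec:4} then dominates $\Delta_n^{s,t}$ by a convergent double sum of products of the quantities $|\widehat\varphi(\cdot)|$ and absolute values of $g$-coefficients, with the summation indices shifted by $s$, $t$, $n+1-s$, and $n+1-t$. Since $C_{t,n}=\sum_{s=1}^{n}\Delta_n^{s,t}$, I would substitute this inequality and interchange the order of summation so that the $s$-summation is performed first.

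After the interchange, the inner sums over $s\in\{1,\dots,n\}$ are exactly the objects estimated in Section~\ref{sec:5}: partial sums of shifted $\widehat\varphi(k)$, possibly weighted by a slowly varying factor. I would invoke those estimates — which exploit that $\widehat\varphi(k)=O(1/k)$ with alternating sign, so that appropriately arranged sums of $\widehat\varphi$ are far smaller than $\sum_{k\le N}|\widehat\varphi(k)|\asymp\log N$ — to reduce each inner sum to an explicit elementary expression in $n$, $t$, and the remaining outer summation index.

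It remains to estimate the resulting outer sum over the index tied to $t$ and $n$. Here I would split the summation range into the region where the index is comparable to $t$, the region between $t$ and $n+1-t$, and the region near $n$; in each region one bounds the $g$-coefficients and the coefficients of $|1-e^{i\theta}|^{-2d}$ (which are $\asymp k^{d-1}$) against the output of the previous step. Adding the three contributions and using $0<d<1/2$, the dominant term is the claimed $n^{1-d}t^{-d}(n+1-t)^{-1}$. The factor $(n+1-t)^{-1}$, as opposed to the fractional power $(n+1-t)^{-(1/2)-d}$ in the row-sum bound of Theorem~\ref{thm:Bound1}, is the signature of summing over $s$ first: it comes from a sum extending only up to index $\asymp n+1-t$ whose terms decay at least like $j^{-1}$. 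Since every constant in this chain depends only on $w$, so does the resulting $M_3$, which proves (\ref{eq:Tnwss}).

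The step I expect to be the main obstacle is the second one. Because $\widehat\varphi$ decays only like $1/k$, crude absolute-value bounds on the inner sums would lose logarithmic factors and, more seriously, would not reproduce the correct powers of $t$ and $n+1-t$. Obtaining the sharp estimate forces one to use the precise asymptotics and the alternating sign of $\widehat\varphi(k)$ — equivalently, a summation-by-parts argument inside the convolution-type sums — which is precisely what the lemmas of Section~\ref{sec:5} are built to deliver. Consequently, the proof of Theorem~\ref{thm:Bound3} in Section~\ref{sec:7} is in large part the careful matching of the inner sums to those lemmas followed by the bookkeeping of the outer sum across the three boundary regimes $s\asymp 1$, $s\asymp t$, and $s\asymp n$.
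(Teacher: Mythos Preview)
Your outline is on the right track --- start from the key bound of Section~\ref{sec:4} (Theorem~\ref{thm:Tnbound}), sum over $s$, and feed in the estimates of Section~\ref{sec:5} --- but the actual mechanism differs from what you describe, and the central technical device is missing. First, there is no ``alternating sign'' of the phase-function coefficients; the paper uses only the size bound $\|\beta_n\|\le K_3/(n+1)$ and the \emph{smoothness} bound $\|\beta_{n+1}-\beta_n\|\le K_3/(n+2)^2$ (Proposition~\ref{prop:beta123}). The summation by parts is not performed on $\beta$ but on the AR coefficients $\tilde a_k$, already inside Section~\ref{sec:4} (Proposition~\ref{prop:sum-by-parts134}); this is what produces the differences $\tilde b^{k}_{n,t-u,\ell}-\tilde b^{k}_{n,t+1-u,\ell}$ that Section~\ref{sec:5} then bounds. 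Also, the coefficients $a_k,\tilde a_k$ that appear in the key formula decay like $k^{-1-d}$, not geometrically.

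The genuine gap is the sum over the recursion depth $k$. The key bound (\ref{eq:key-estimate}) is an \emph{infinite} sum $\sum_{k\ge 1}\{S_{1,k}+S_{2,k}\}$, and Section~\ref{sec:5} only gives pointwise bounds on $\|\tilde b^{k}_{n,u,\ell}\|$ in terms of the iterated kernels $D_k(m_1,\ell)$ of (\ref{eq:D2})--(\ref{eq:D3more}), together with a factor $(r\sin(\pi d)/\pi)^{k-1}$. What makes $\sum_k$ converge is the Hilbert-space step carried out in Lemma~\ref{lemma:Sestim321b}: after rescaling, the $s$-sum of $S_{i,k}$ becomes an inner product $(\,\cdot\,,T^{k-1}\phi)_L$ in $L=L^2((0,\infty))$, where $T$ is the Hilbert-type operator (\ref{eq:Tfx}) with $\|T\|=\pi$. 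Cauchy--Schwarz then yields a factor $\pi^{k-1}$ that cancels the $\pi^{-(k-1)}$ from Section~\ref{sec:5}, leaving the summable $(r\sin(\pi d))^{k-1}$. Your proposal does not mention this $k$-summation at all, and without it the argument cannot close. Finally, no three-region splitting in $s$ is used: the factor $(n+1-t)^{-1}$ arises from estimating $\int_0^t u^{-d}\|g_{t,u}\|_L\,du$ by Cauchy--Schwarz in Lemma~\ref{lemma:Sestim321b}, where $\|g_{t,u}\|_L\asymp\{1-(t/n)+(u/n)\}^{-3/2}$, not from a boundary-regime decomposition.
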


The proof of Theorem \ref{thm:Bound3} will be given in Section  \ref{sec:7}.

\begin{theorem}\label{thm:Bound4}
We assume (F${}_d$) for $d\in (0,1/2)$. Let $\delta \in (0,1)$. 
Then there exists $M_4 \in (0,\infty)$, which depends only on $w$ and $\delta$, 
such that
\begin{equation}
C_{t,n}
\le M_4 n^{-d} t ^{-d},\quad 
n\in\N,\ \ t \in \{1,\dots, [\delta n]\}.
\label{eq:col-sum1}
\end{equation}
\end{theorem}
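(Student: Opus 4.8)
The plan is to obtain (\ref{eq:col-sum1}) as an immediate consequence of Theorem \ref{thm:Bound3}, in direct parallel with the way Theorem \ref{thm:Bound2} was deduced from Theorem \ref{thm:Bound1}. The only structural feature being exploited is that, once $t$ is restricted to $\{1,\dots,[\delta n]\}$ with $\delta\in(0,1)$, the factor $(n+1-t)^{-1}$ occurring in the bound (\ref{eq:Tnwss}) is comparable to $n^{-1}$, and combining $n^{1-d}$ with this $n^{-1}$ produces exactly the $n^{-d}$ in (\ref{eq:col-sum1}).

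First I would dispose of the trivial range: if $n<1/\delta$, then $[\delta n]=0$, the index set $\{1,\dots,[\delta n]\}$ is empty, and (\ref{eq:col-sum1}) holds vacuously. So assume $n\ge 1/\delta$, whence $[\delta n]\ge 1$. For $t\in\{1,\dots,[\delta n]\}$, using $[\delta n]\le\delta n$ we get
\[
n+1-t \ \ge\ n+1-[\delta n] \ \ge\ n+1-\delta n \ >\ (1-\delta)n,
\]
and therefore $(n+1-t)^{-1} < (1-\delta)^{-1} n^{-1}$.

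Substituting this into Theorem \ref{thm:Bound3} gives, for all $n\ge 1/\delta$ and $t\in\{1,\dots,[\delta n]\}$,
\[
C_{t,n}\ \le\ M_3\, n^{1-d}\, t^{-d}\,(n+1-t)^{-1}\ <\ \frac{M_3}{1-\delta}\, n^{-d}\, t^{-d},
\]
so (\ref{eq:col-sum1}) holds with $M_4:=M_3/(1-\delta)$, which depends only on $w$ (through $M_3$) and on $\delta$. I do not expect any genuine obstacle here: the real content is in Theorem \ref{thm:Bound3}, whose proof is deferred to Section \ref{sec:7}, and granting that bound the argument reduces to the elementary estimate on $n+1-t$ above, together with the bookkeeping needed to handle the floor function and the empty-range case. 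A side remark worth including is that the exponents in (\ref{eq:col-sum1}) and in the second line of (\ref{eq:col-sum2}) match up to constants at the junction $t=[\delta n]$, which is why Theorem \ref{thm:Bound2} and Theorem \ref{thm:Bound4} together will assemble into the piecewise statement of Theorem \ref{thm:1}.
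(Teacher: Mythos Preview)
Your proposal is correct and matches the paper's own proof essentially line for line: the paper also applies Theorem~\ref{thm:Bound3}, bounds $(n+1-t)^{-1}\le (n-\delta n)^{-1}=(1-\delta)^{-1}n^{-1}$ for $t\le[\delta n]$, and takes $M_4=M_3/(1-\delta)$. Your extra remarks about the empty range when $[\delta n]=0$ and about the junction at $t=[\delta n]$ are fine but not needed.
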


\begin{proof}
Let $\delta \in (0,1)$. 
Then, by Theorem \ref{thm:Bound3}, for $n\in \N$ and $ t\in \{1, \dots, [\delta n]\}$,
\begin{equation*}
C_{t,n} \leq M_3 n^{1-d} t^{-d} (n+1-t)^{-1} \leq M_3 n^{1-d} t^{-d} (n-n\delta)^{-1}
= M_3(1-\delta)^{-1} n^{-d} t^{-d}.
\end{equation*}
Thus (\ref{eq:col-sum1}) holds with $M_4 = M_3/(1 - \delta)$.
\end{proof}

\begin{theorem}\label{thm:Bound5}
We assume (F${}_d$) for $d\in (0,1/2)$. 
Then there exists $M_5 \in (0,\infty)$, which depends only on $w$, 
such that
\begin{equation}
C_{t,n} \le M_5, \quad
n\in\N,\ \ t \in \{ 1,\dots, n \}.
\label{eq:col-sum3}
\end{equation}
\end{theorem}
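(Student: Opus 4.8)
The plan is to derive the uniform bound \eqref{eq:col-sum3} by combining the two regimes already covered by the preceding results. Fix $\delta\in(0,1)$ once and for all, say $\delta=1/2$; then by Theorem \ref{thm:Bound4} there is a constant $M_4$, depending only on $w$, such that $C_{t,n}\le M_4 n^{-d}t^{-d}$ for all $n\in\N$ and $t\in\{1,\dots,[\delta n]\}$. Since $n^{-d}t^{-d}\le 1$ for $t\ge 1$ and $n\ge 1$, this already gives $C_{t,n}\le M_4$ on the ``left'' range $t\in\{1,\dots,[\delta n]\}$. For the ``right'' range $t\in\{[\delta n]+1,\dots,n\}$, I would invoke Theorem \ref{thm:Bound2} (or equivalently Theorem \ref{thm:Bound1} together with Lemma \ref{lemma:CRequal333}, which lets me pass freely between $R_{t,n}$ and $C_{t,n}$): it yields a constant $M_2$, depending only on $w$ and $\delta$, with $C_{t,n}=R_{t,n}\le M_2 n^{(1/2)-d}(n+1-t)^{-(1/2)-d}$ on this range. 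The exponent $(1/2)-d$ is nonnegative precisely when $d\le 1/2$, which holds, but to bound $n^{(1/2)-d}$ by $(n+1-t)^{(1/2)-d}$ I need $n+1-t\ge$ something comparable to $n$, or more simply I need $(1/2)-d\ge 0$ and a lower bound $n+1-t\ge 1$; however $n^{(1/2)-d}(n+1-t)^{-(1/2)-d}$ is \emph{not} bounded as $t\to n$ because the exponent $-(1/2)-d$ is strictly negative. So the naive argument fails on the extreme right, and this is the one genuine obstacle.

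The fix is that the two ranges overlap in the right way if I instead exploit that $R_{t,n}=C_{t,n}$ and apply the \emph{left}-range bound \eqref{eq:col-sum1} to $R_{t,n}$ after reflecting the index. Concretely, the matrices $T_n(w)$ and $T_\infty(w)$ have a reversal symmetry: conjugating by the flip permutation (and transposing) sends the $(s,t)$ block structure of $T_n(w)^{-1}$ to that with indices $s\mapsto n+1-s$, $t\mapsto n+1-t$, so $\Delta_n^{s,t}=\Delta_n^{n+1-s,\,n+1-t}$ up to taking norms, which is fine since $\Delta$ is already a norm. Hmm — but this reflection symmetry is exactly what would be needed and it does \emph{not} hold for $T_\infty(w)^{-1}$, whose semi-infinite structure has no right end. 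So reflection is not available either.

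The correct and cleanest route, then, is: for $t$ in the right range $\{[\delta n]+1,\dots,n\}$ use Theorem \ref{thm:Bound3} directly, $C_{t,n}\le M_3 n^{1-d}t^{-d}(n+1-t)^{-1}$, and split this right range further. When $n+1-t\ge \epsilon n$ for a fixed small $\epsilon$, the factor $(n+1-t)^{-1}\le (\epsilon n)^{-1}$ combines with $n^{1-d}t^{-d}\le n^{1-d}\cdot n^{-d}\cdot(t/n)^{-d}$; since $t\ge \delta n$ we get $(t/n)^{-d}\le \delta^{-d}$, so $C_{t,n}\le M_3\epsilon^{-1}\delta^{-d}n^{-2d}\le M_3\epsilon^{-1}\delta^{-d}$, uniformly. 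The remaining sliver, $t$ with $n+1-t<\epsilon n$, is handled not by a formula for $C_{t,n}$ as a whole but by bounding the \emph{partial sum} directly: this is precisely the purpose of Theorem \ref{thm:Bound6} (the bound for the partial sum in $C_{t,n}$, proved in Section \ref{sec:8}), which must give a bound on $\sum_{s}\Delta_n^{s,t}$ restricted to the relevant block that stays $O(1)$ uniformly as $t$ approaches $n$. So the key steps, in order, are: (i) invoke Theorem \ref{thm:Bound4} with any fixed $\delta$ to dispatch $t\le[\delta n]$; (ii) invoke Theorem \ref{thm:Bound3} and the elementary estimate above to dispatch $[\delta n]<t$ with $n+1-t\ge\epsilon n$; (iii) invoke Theorem \ref{thm:Bound6} to dispatch $n+1-t<\epsilon n$; (iv) take $M_5$ to be the maximum of the three resulting constants. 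The main obstacle is step (iii): controlling $C_{t,n}$ as $t\to n$, where the per-block bounds degrade, which is exactly why the paper isolates a separate partial-sum estimate (Theorem \ref{thm:Bound6}) rather than trying to push Theorems \ref{thm:Bound1}–\ref{thm:Bound3} to the boundary.
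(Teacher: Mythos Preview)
Your steps (i) and (ii) are fine, but step (iii) has a genuine gap. Theorem \ref{thm:Bound6} bounds only the partial sum $\sum_{s=1}^{[n\delta]}\Delta_n^{s,t}$, i.e.\ the sum over $s$ in the \emph{left} range. When $t$ is close to $n$, this does not control the full column sum $C_{t,n}=\sum_{s=1}^{n}\Delta_n^{s,t}$; the dangerous contributions come from $s$ near $n$ as well, and Theorem \ref{thm:Bound6} says nothing about those. So invoking Theorem \ref{thm:Bound6} does not close the argument on the sliver $n+1-t<\epsilon n$, and none of Theorems \ref{thm:Bound1}--\ref{thm:Bound4} gives a uniform bound there either (e.g.\ Theorem \ref{thm:Bound3} yields $C_{n,n}\le M_3 n^{1-2d}\to\infty$).

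You dismissed the reflection idea too quickly, and that is exactly what the paper uses. It is true that the flip $s\mapsto n+1-s$, $t\mapsto n+1-t$ does not act on $T_\infty(w)^{-1}$, but it \emph{does} act on $T_n(w)^{-1}$: one has $(T_n(w)^{-1})^{n+1-s,\,n+1-t}=(T_n(\tilde w)^{-1})^{s,t}$ with $\tilde w(e^{i\theta})=w(e^{-i\theta})$, and $\tilde w$ again satisfies (F${}_d$). The paper separates off the semi-infinite piece by the triangle inequality: first show $\sum_{s\ge 1}\|(T_\infty(w)^{-1})^{s,t}\|\le B_1$ uniformly in $t$ (an easy consequence of $\{\tilde a_k\}\in\ell_1^{q\times q}$), then reduce to bounding $\sum_{s=1}^{n}\|(T_n(w)^{-1})^{s,t}\|$ uniformly. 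For $t\le[(n+1)/2]$ this follows from Theorem \ref{thm:Bound3} (essentially your step (ii)) plus the $T_\infty$ bound; for $t>[(n+1)/2]$ the reflection transfers the problem to $n+1-t\le[(n+1)/2]$ for $\tilde w$, and the previous case applies. Finally $C_{t,n}\le \sum_{s}\|(T_n(w)^{-1})^{s,t}\|+\sum_{s}\|(T_\infty(w)^{-1})^{s,t}\|$. The key point you missed is that reflection need only be applied to the \emph{finite} block; the asymmetric semi-infinite block is handled separately by the elementary $\ell_1$ bound.
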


The proof of Theorem \ref{thm:Bound5} will be given in Section \ref{sec:8}.

\vspace{0.5em}

By using Lemma \ref{lemma:CRequal333} and Theorems \ref{thm:Bound2}, \ref{thm:Bound4} and \ref{thm:Bound5}, 
we can prove Theorem \ref{thm:1}.

\begin{proof}[Proof of Theorem \ref{thm:1}]
The assertion (\ref{eq:col-summ2}) is the same as Theorem \ref{thm:Bound5}. 
By (\ref{eq:col-sum1}), (\ref{eq:col-sum2}) and Lemma \ref{lemma:CRequal333}, 
(\ref{eq:col-summ1}) holds with $M = \max(M_2, M_4) \in (0,\infty)$.
\end{proof}

We end this section with a result on the bound for the partial sum in $C_{t,n}$. 
This result will be used in Section \ref{sec:9} to prove Theorem \ref{thm:Omega} below.

\begin{theorem}\label{thm:Bound6}
We assume (F${}_d$) for $d\in (0,1/2)$. Let $\delta \in (0,1)$.
Then there exists $M_6 \in (0,\infty)$, which depends only on $w$ and $\delta$, 
such that
\begin{equation}
\sum_{s=1}^{[n\delta]} \Delta_n^{s,t} \le M_6 n^{-d}, \qquad
n\in\N,\ \ t \in \{ 1,\dots, n \}.
\label{eq:col-sum4}
\end{equation}  
\end{theorem}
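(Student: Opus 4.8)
The plan is to combine Theorem \ref{thm:Bound6}'s claim with the two already-available column-sum bounds, namely the pointwise-in-$s$ information hidden in Theorem \ref{thm:Bound3} together with the ``short block'' estimates. The key observation is that the partial sum $\sum_{s=1}^{[n\delta]}\Delta_n^{s,t}$ is dominated by the full column sum $C_{t,n}=\sum_{s=1}^n\Delta_n^{s,t}$, but that crude bound only gives $O(1)$ uniformly (Theorem \ref{thm:Bound5}), not the required decay $O(n^{-d})$. So the real work is to show that the blocks $\Delta_n^{s,t}$ with $s\le[n\delta]$ collectively contribute only $O(n^{-d})$, and here one must split on the size of $t$.

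First I would handle the regime $t\in\{1,\dots,[\delta' n]\}$ for a suitable $\delta'\in(\delta,1)$. In that range Theorem \ref{thm:Bound4} (with its own threshold parameter) already yields $C_{t,n}\le M_4 n^{-d}t^{-d}\le M_4 n^{-d}$, and since the partial sum is at most $C_{t,n}$, the bound (\ref{eq:col-sum4}) follows immediately on that part with constant $M_4$. The delicate case is $t\in\{[\delta' n]+1,\dots,n\}$, i.e.\ $t$ large, where $C_{t,n}$ itself is only $O(1)$ and one cannot afford to bound by the whole column. Here I would return to the key bound for $\Delta_n^{s,t}$ from Section \ref{sec:4} and exploit the fact that, for $s$ confined to a \emph{proper} initial segment $\{1,\dots,[n\delta]\}$ while $t$ is bounded away from that segment (since $t>[\delta' n]$ with $\delta'>\delta$, we have $t-s$ of order $n$), the relevant Fourier-coefficient sums of the phase function decay faster. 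Concretely, I expect the per-block estimate to take the form $\Delta_n^{s,t}\le (\text{const})\,n^{?}\,s^{-1-d}\cdot(\text{something bounded in }t)$ or a product structure in which the $s$-sum $\sum_{s=1}^{[n\delta]} s^{-1-d}$ or $\sum_{s=1}^{[n\delta]} s^{-d}$ converges or grows only polynomially, and the remaining $n,t$-dependence combines to $n^{-d}$.

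The main obstacle, and the step I would spend the most care on, is producing a summable-in-$s$ bound for $\Delta_n^{s,t}$ that is simultaneously $O(n^{-d})$ after summing $s$ over $\{1,\dots,[n\delta]\}$, uniformly over \emph{all} $t\in\{1,\dots,n\}$ — in particular including $t$ near $n$, where the naive column-sum approach fails. This should follow by revisiting the same decomposition used to prove Theorems \ref{thm:Bound3} and \ref{thm:Bound5}, but keeping track of the extra decay gained from the constraint $s\le[n\delta]$ (which forces $n+1-s\ge (1-\delta)n$, an order-$n$ quantity) rather than letting $s$ range over all of $\{1,\dots,n\}$. In effect, one replaces the factor $(n+1-t)^{-1}$ appearing in (\ref{eq:Tnwss}) by the more favorable $(n+1-s)^{-1}$-type control that is available when $s$ is small, and then sums: $\sum_{s=1}^{[n\delta]}(\text{bound involving }s \text{ and }n)\le (\text{const})\,n^{-d}$. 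I would then combine the two regimes, taking $M_6$ to be the maximum of the constants produced, to conclude (\ref{eq:col-sum4}) for all $n\in\N$ and all $t\in\{1,\dots,n\}$.
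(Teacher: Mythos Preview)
Your split into an easy regime ($t\le[\delta' n]$) and a hard regime ($t>[\delta' n]$) is correct, and the easy regime is handled exactly as in the paper, by bounding the partial sum by $C_{t,n}$ and invoking Theorem~\ref{thm:Bound4}. The gap is in the hard regime. Your plan there is to go back to the key bound of Theorem~\ref{thm:Tnbound} and exploit that $s\le[n\delta]$ forces $n+1-s\ge(1-\delta)n$. But in that bound the odd-indexed terms carry the argument $n-s$ (which is indeed large), while the even-indexed terms carry the argument $s-1$, which is \emph{small}. Your heuristic ``extra decay from $n+1-s$ large'' only touches the odd half of the decomposition. For the even half, tracing through the proof of Lemma~\ref{lemma:Sestim321b} with $s$ restricted to $\{0,\dots,[n\delta]-1\}$ merely replaces $\phi$ by $\phi_\delta(\ell)=\int_0^\delta(s+\ell)^{-1-d}\,ds\le\phi(\ell)$; the troublesome factor $(n-t)^{-1}$ arising from $\|g_{t,u}\|_L$ (cf.\ (\ref{eq:S2kss3})) survives untouched, and for $t$ near $n$ the resulting bound is of order $n^{1-2d}$, not $n^{-d}$. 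So the direct route you sketch does not close.

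The paper handles the hard regime by a structurally different idea that you do not mention: the time-reversal identity $(T_n(w)^{-1})^{s,t}=(T_n(\tilde w)^{-1})^{n+1-s,\,n+1-t}$ (equation~(\ref{eq:TnTtilde})). When $t$ is large, $n+1-t$ is small, so one is back in the easy regime for the reversed symbol $\tilde w$ and can apply Theorem~\ref{thm:Bound4} to $\tilde w$. What remains are the two ``infinite'' pieces $\sum_{s\le[n\delta]}\|(T_\infty(\tilde w)^{-1})^{n+1-s,\,n+1-t}\|$ and $\sum_{s\le[n\delta]}\|(T_\infty(w)^{-1})^{s,t}\|$, and these are bounded directly from the explicit formula~(\ref{eq:Tinftyinv973}) using that $t-s\ge n(r-\delta)$ forces the relevant $\tilde a$-tails to be $O(n^{-d})$ (Proposition~\ref{prop:Tbound22}). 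The time-reversal step is the missing ingredient in your proposal.
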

The proof of Theorem \ref{thm:Bound6} will also be given in Section \ref{sec:8}.

%%%%%%%%%%%%%%%%%%%%%%%%%%%%%%%%%%%%%%%%%%%%%%%%%%%%%%%%%%%%%%%%%%%
%%%                     Section 3
%%%%%%%%%%%%%%%%%%%%%%%%%%%%%%%%%%%%%%%%%%%%%%%%%%%%%%%%%%%%%%%%%%

\section{Applications} \label{sec:appl} 

\subsection{Baxter-type theorems} \label{sec:baxter}

In this section, we present two Baxter-type theorems for the vector ARFIMA model with $d\in (0,1/2)$. 

Before giving the results, we first review Baxter's inequality, which dates back to 
the work of Baxter \cite{p:bax-62} in 1962. 
Let
$\{\phi_{k}\}_{k\in\N}$ and $\{\phi_{n,k}\}_{k=1}^{n}$ be the forward infinite and finite predictor coefficients of a second-order stationary process $\{X_k\}$ with spectral density $w$, respectively (cf.\ \cite{b:bro-dav-06}, Section 11.4). Then, 
Baxter's inequality, which was proved under (\ref{eq:S}) in \cite{p:bax-62, CP, HD} 
and under ($\mathrm{F}_d$) with $d\in (0,1/2)$ in \cite{p:ino-kas-06, IKP2},
asserts that, there exists $K\in (0,\infty)$ such that
\begin{equation}
\sum_{k=1}^{n}\Vert \phi_{n,k} - \phi_k \Vert
\le K\sum_{k=n+1}^{\infty}\Vert \phi_k\Vert,\quad
n\in\N.
\label{eq:Baxter597}
\end{equation}
In particular, we have
\begin{equation}
\lim_{n\to\infty} \sum_{k=1}^{n} \Vert \phi_{n,k} -\phi_k\Vert = 0.
\label{eq:pred-234}
\end{equation}
The convergence result (\ref{eq:pred-234}) has various applications in time series analysis, such as 
the autoregressive sieve bootstrap (see, e.g., \cite{IKP2} and the references therein). 

Now, let the output sequence $\mathbf{y}$ belong to either $\mathcal{A}_{\rho}^{\infty \times q}$ with  $\rho \in (1-2d, \infty)$ or 
$\mathcal{B}^{\infty \times q}$, where, for $\rho \in (0,\infty)$,
\begin{align}
\mathcal{A}_{\rho}^{\infty \times q} 
&:= \left\{ \mathbf{y} = (y_1^\top, y_2^\top, \dots)^\top \in \C^{\infty\times q}: 
\sup_{k \in \N}  k^{\rho} \|y_k\|   <\infty \right\},
\label{eq:Arho123}\\
\mathcal{B}^{\infty \times q} &
:= \left\{ \mathbf{y} = (y_1^\top, y_2^\top, \dots)^\top \in \C^{\infty\times q}: 
\sum_{k=1}^{\infty} \|y_k\| <\infty \right\}.
\label{eq:B123}
\end{align}
For $n \in \N$, let $\mathbf{z} = (z_1^\top, z_2^\top, \dots)^\top$ 
and $\mathbf{z}_n = (z_{1,n}^\top, \dots, z_{n,n}^\top)^\top$ be as in (\ref{eq:tsysinf123}) and (\ref{eq:tsysn123}) with (\ref{eq:yn234}), respectively.

Here are our two Baxter-type theorems for long memory processes.

\begin{theorem} \label{thm:B-type1}
We assume (F${}_d$) for $d\in (0,1/2)$. 
Let $\mathbf{y} \in \mathcal{A}_{\rho}^{\infty \times q}$ 
with $\rho \in (1-2d,\infty)$.  Then, 
%Then, for solutions $\mathbf{z} = (z_1^\top, z_2^\top, \dots)^\top \in \C^{\infty\times q}$ in (\ref{eq:tsysinf123}) 
%and $\mathbf{z}_n = (z_{1,n}^\top, \dots, z_{n,n}^\top)^\top \in \C^{nq\times q}$ in (\ref{eq:tsysn123}) with (\ref{eq:yn234}), we have
\begin{equation}
\sum_{k=1}^{n} \|z_{k} - z_{k,n}\| =
\begin{cases}
O(n^{1-2d-\rho}) &(1-2d<\rho<1-d),\\
O(n^{-d}\log n) &(\rho=1-d),\\
O(n^{-d}) &(\rho>1-d),
\end{cases}
\quad n\to\infty.
\label{eq:BaxterA}
\end{equation}
In particular, (\ref{eq:Baxter}) holds.
\end{theorem}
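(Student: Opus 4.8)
The plan is to split $z_k-z_{k,n}$ into a ``corner'' term, governed by $T_n(w)^{-1}-T_\infty(w)^{-1}$ and hence by Theorem~\ref{thm:1}, and a ``tail'' term, governed by the entries of $T_\infty(w)^{-1}$ lying outside the $n\times n$ corner. Since $\mathbf{z}=T_\infty(w)^{-1}\mathbf{y}$ and $\mathbf{z}_n=T_n(w)^{-1}\mathbf{y}_n$, we have $z_k=\sum_{t=1}^\infty(T_\infty(w)^{-1})^{k,t}y_t$ (absolutely convergent, by the row decay of $T_\infty(w)^{-1}$ recalled below) and $z_{k,n}=\sum_{t=1}^n(T_n(w)^{-1})^{k,t}y_t$, so for $1\le k\le n$
\[
z_k-z_{k,n}=\sum_{t=1}^n\bigl[(T_\infty(w)^{-1})^{k,t}-(T_n(w)^{-1})^{k,t}\bigr]y_t+\sum_{t=n+1}^\infty(T_\infty(w)^{-1})^{k,t}y_t=:I_k+II_k.
\]
Writing $c_0:=\sup_{k\in\N}k^\rho\|y_k\|<\infty$, so that $\|y_t\|\le c_0\,t^{-\rho}$, it then suffices to bound $\sum_{k=1}^n\|I_k\|$ and $\sum_{k=1}^n\|II_k\|$ separately.

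For the corner term, submultiplicativity of the spectral norm and $\sum_{k=1}^n\Delta_n^{k,t}=C_{t,n}$ give $\sum_{k=1}^n\|I_k\|\le\sum_{t=1}^n\|y_t\|\,C_{t,n}\le c_0\sum_{t=1}^n t^{-\rho}C_{t,n}$. I would fix any $\delta\in(0,1)$ and split at $t=[\delta n]$. On $t\in\{1,\dots,[\delta n]\}$, inserting $C_{t,n}\le Mn^{-d}t^{-d}$ from Theorem~\ref{thm:1} gives a bound $\lesssim n^{-d}\sum_{t=1}^{[\delta n]}t^{-(\rho+d)}$, whose inner sum is $O(1)$, $O(\log n)$, or $O(n^{1-\rho-d})$ according as $\rho+d>1$, $\rho+d=1$, or $\rho+d<1$, so this piece contributes $O(n^{-d})$, $O(n^{-d}\log n)$, or $O(n^{1-2d-\rho})$ respectively. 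On $t\in\{[\delta n]+1,\dots,n\}$, inserting $C_{t,n}\le Mn^{(1/2)-d}(n+1-t)^{-(1/2)-d}$ and using $t^{-\rho}\lesssim n^{-\rho}$ gives $\lesssim n^{(1/2)-d-\rho}\sum_{j=1}^{n-[\delta n]}j^{-(1/2)-d}\lesssim n^{1-2d-\rho}$, since $(1/2)+d\in(1/2,1)$. As $1-2d-\rho<-d$ precisely when $\rho>1-d$, these two pieces combine to give exactly the three rates in (\ref{eq:BaxterA}).

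For the tail term, interchanging the order of summation gives $\sum_{k=1}^n\|II_k\|\le\sum_{t=n+1}^\infty\|y_t\|\sum_{k=1}^n\|(T_\infty(w)^{-1})^{k,t}\|$, so the input I need is a decay estimate for the entries of $T_\infty(w)^{-1}$ far from the corner. I would extract this from the closed-form expression (\ref{eq:Tinftyinv973}), which represents $(T_\infty(w)^{-1})^{k,t}$ as a finite sum of products of the backward autoregressive coefficients $\{a_j\}$ of $w$, together with the asymptotics from Section~\ref{sec:4}: $\|a_j\|\lesssim(1+j)^{-1-d}$, $\bigl\|\sum_{j=0}^N a_j\bigr\|\lesssim(1+N)^{-d}$ (which uses that $\sum_{j\ge0}a_j=O_q$, forced by the singularity of $w$ at $\theta=0$), and $\|a_j-a_{j+1}\|\lesssim(1+j)^{-2-d}$. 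Comparing the finite sum for $(T_\infty(w)^{-1})^{k,t}$ with the full sum $\sum_{i\ge0}a_i^*a_{i+|k-t|}$ --- which, up to a fixed factor, is the $|k-t|$-th Fourier coefficient of $w^{-1}$ and hence $O\bigl((1+|k-t|)^{-1-2d}\bigr)$ by summation by parts from the above asymptotics --- yields $\|(T_\infty(w)^{-1})^{k,t}\|\lesssim(1+|k-t|)^{-1-2d}+(1+\min(k,t))^{-d}(1+|k-t|)^{-1-d}$. Summing this over $k\in\{1,\dots,n\}$ for $t>n$ gives $\sum_{k=1}^n\|(T_\infty(w)^{-1})^{k,t}\|\lesssim(t-n)^{-2d}$, so, using $\rho>1-2d$ (equivalently $\rho+2d>1$),
\[
\sum_{k=1}^n\|II_k\|\lesssim\sum_{t=n+1}^\infty t^{-\rho}(t-n)^{-2d}\lesssim n^{1-2d-\rho},
\]
which is absorbed into the corner estimate in each of the three regimes.

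The hard part will be the tail estimate, and precisely the $(1+|k-t|)^{-1-2d}$ decay of the distant entries of $T_\infty(w)^{-1}$: Theorem~\ref{thm:1} controls only the $n\times n$ corner, so this decay must be obtained directly from the autoregressive representation of $T_\infty(w)^{-1}$, and it crucially exploits the cancellation $\sum_j a_j=O_q$ that is special to the long-memory symbol --- without it one only gets $\|(T_\infty(w)^{-1})^{k,t}\|\lesssim(1+|k-t|)^{-1-d}$, which makes the tail sum diverge as soon as $\rho\le1-d$. The $\delta$-splitting and the power-sum bookkeeping in the corner estimate are routine. Finally, since $n^{1-2d-\rho}\to0$, $n^{-d}\log n\to0$, and $n^{-d}\to0$ as $n\to\infty$, (\ref{eq:BaxterA}) yields (\ref{eq:Baxter}).
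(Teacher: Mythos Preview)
Your proof is correct and, for the corner term $\sum_k\|I_k\|$, matches the paper's argument exactly: the paper (Lemma~\ref{lemma:I12nA}) also splits at $[\delta n]$ and invokes Theorems~\ref{thm:Bound2} and~\ref{thm:Bound4} (the two halves of Theorem~\ref{thm:1}) to obtain the three-case bound for $I_n$.

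For the tail term the paper takes a slightly different route. Instead of first proving the entry-wise bound $\|(T_\infty(w)^{-1})^{k,t}\|\lesssim(1+|k-t|)^{-1-2d}+(1+k\wedge t)^{-d}(1+|k-t|)^{-1-d}$ and then summing, the paper applies summation by parts directly to the finite sum $\sum_{u=0}^{s-1}\tilde a_u^*\tilde a_{t-s+u}$ to write $(T_\infty(w)^{-1})^{s,t}=\tilde A_s^*\tilde a_{t-1}-\sum_{u=0}^{s-2}\tilde A_{u+1}^*(\tilde a_{t-s+u+1}-\tilde a_{t-s+u})$, then sums over $s$ and $t$ and estimates the resulting double sum by an explicit integral $\xi(d)$. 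Both routes rest on the same three ingredients you identify---$\|\tilde a_j\|\lesssim j^{-1-d}$, $\|\tilde A_j\|\lesssim j^{-d}$ (equivalently $\sum_j\tilde a_j=O_q$), and $\|\tilde a_{j+1}-\tilde a_j\|\lesssim j^{-2-d}$ (the last is Proposition~\ref{prop:atilde382}, not in Section~\ref{sec:4})---and both yield $J_n=O(n^{1-2d-\rho})$. Your detour through the full convolution sum as a Fourier coefficient of $\tilde w^{-1}$ is a pleasant observation but not essential, since summation by parts already gives the $O(m^{-1-2d})$ decay directly; conversely, your entry-wise bound is a reusable statement about $T_\infty(w)^{-1}$ that the paper does not isolate. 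A small notational point: the coefficients in (\ref{eq:Tinftyinv973}) are the $\tilde a_j$, not the $a_j$.
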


\begin{theorem} \label{thm:B-type2}
We assume (F${}_d$) for $d\in (0,1/2)$. 
Let $\mathbf{y} \in \mathcal{B}^{\infty \times q}$ and $\kappa \in (0,d/(1-d))$.
Then,
%Then, for solutions $\mathbf{z} = (z_1^\top, z_2^\top, \dots)^\top \in \C^{\infty\times q}$ in (\ref{eq:tsysinf123}) 
%and $\mathbf{z}_n = (z_{1,n}^\top, \dots, z_{n,n}^\top)^\top \in \C^{nq\times q}$ in (\ref{eq:tsysn123}) with (\ref{eq:yn234}),
% we have
\begin{equation} \label{eq:BaxterB}
\sum_{k=1}^{n} \|z_{k} - z_{k,n}\| = O(n^{-d+\kappa(1-d)} ) 
+ O\bigg( \sum_{t=[n^{\kappa}]+1}^{\infty}  \|y_{t}\| \bigg),
\quad n \to \infty.
\end{equation}
In particular, (\ref{eq:Baxter}) holds.
\end{theorem}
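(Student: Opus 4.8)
The plan is to compare $\mathbf{z}$ and $\mathbf{z}_n$ through an intermediate solution attached to a \emph{truncated} output. Using $\mathbf{z}=T_\infty(w)^{-1}\mathbf{y}$ and $\mathbf{z}_n=T_n(w)^{-1}\mathbf{y}_n$ we have $z_k=\sum_{t=1}^{\infty}(T_\infty(w)^{-1})^{k,t}y_t$ and $z_{k,n}=\sum_{t=1}^{n}(T_n(w)^{-1})^{k,t}y_t$. Fix the cut-off $m:=[n^{\kappa}]$, put $\mathbf{y}^{(1)}:=(y_1^{\top},\dots,y_m^{\top},O_q,O_q,\dots)^{\top}$, and set $\mathbf{w}:=T_\infty(w)^{-1}\mathbf{y}^{(1)}$ and $\mathbf{w}_n:=T_n(w)^{-1}\mathbf{y}^{(1)}_n$, so that $w_k=\sum_{t=1}^{m}(T_\infty(w)^{-1})^{k,t}y_t$ and $w_{k,n}=\sum_{t=1}^{m}(T_n(w)^{-1})^{k,t}y_t$ (recall $m\le n$, so $\mathbf{y}^{(1)}_n$ has its first $m$ blocks equal to $y_1,\dots,y_m$ and the remaining blocks zero). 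The triangle inequality then gives
\[
\sum_{k=1}^{n}\|z_k-z_{k,n}\|
\le \sum_{k=1}^{n}\|z_k-w_k\|+\sum_{k=1}^{n}\|w_k-w_{k,n}\|+\sum_{k=1}^{n}\|w_{k,n}-z_{k,n}\|,
\]
and I would estimate the three pieces separately.

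The middle piece is the finite-section error for the \emph{finitely supported} output $\mathbf{y}^{(1)}$. From $w_k-w_{k,n}=\sum_{t=1}^{m}\{(T_\infty(w)^{-1})^{k,t}-(T_n(w)^{-1})^{k,t}\}y_t$ and submultiplicativity of the spectral norm,
\[
\sum_{k=1}^{n}\|w_k-w_{k,n}\|\le\sum_{t=1}^{m}\|y_t\|\sum_{k=1}^{n}\Delta_n^{k,t}=\sum_{t=1}^{m}\|y_t\|\,C_{t,n}.
\]
Since $\kappa<d/(1-d)<1$, we have $m=[n^{\kappa}]\le[\delta n]$ for, say, $\delta=1/2$ and all large $n$, so Theorem~\ref{thm:Bound4} applies and bounds the middle piece by $M_4\,n^{-d}\sum_{t=1}^{m}t^{-d}\|y_t\|\le M_4\,n^{-d}\sum_{t=1}^{\infty}\|y_t\|$, hence by $O(n^{-d})$ because $\mathbf{y}\in\mathcal{B}^{\infty\times q}$; in particular it is $O(n^{-d+\kappa(1-d)})$. (Alternatively, one may feed $\mathbf{y}^{(1)}$, regarded as an element of $\mathcal{A}_{\rho}^{\infty\times q}$ with $\sup_k k^{\rho}\|(\mathbf{y}^{(1)})_k\|=O(m^{\rho})$ and $\rho$ slightly larger than $1-d$, into Theorem~\ref{thm:B-type1}; this route reproduces the term $O(n^{-d+\kappa(1-d)})$ exactly.)

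For the first and third pieces I would use that both inverse Toeplitz matrices have uniformly summable columns. The closed-form expression (\ref{eq:Tinftyinv973}) represents $T_\infty(w)^{-1}$ through the backward infinite-order autoregressive coefficients of $w$, and under (F$_d$) with $d\in(0,1/2)$ these are absolutely summable (they decay at rate $k^{-1-d}$; cf.\ Section~\ref{sec:4} and \cite{IKP2}); this yields $C_0:=\sup_{t\ge1}\sum_{k=1}^{\infty}\|(T_\infty(w)^{-1})^{k,t}\|<\infty$, and, together with Theorem~\ref{thm:Bound5}, $\sup_{n\in\N}\sup_{1\le t\le n}\sum_{k=1}^{n}\|(T_n(w)^{-1})^{k,t}\|\le C_0+M_5<\infty$. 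Since $z_k-w_k=\sum_{t>m}(T_\infty(w)^{-1})^{k,t}y_t$ and $w_{k,n}-z_{k,n}=-\sum_{m<t\le n}(T_n(w)^{-1})^{k,t}y_t$, interchanging the order of summation shows that the first and third pieces together are at most $(2C_0+M_5)\sum_{t=m+1}^{\infty}\|y_t\|=O\big(\sum_{t=[n^{\kappa}]+1}^{\infty}\|y_t\|\big)$.

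Adding the three estimates gives (\ref{eq:BaxterB}). For (\ref{eq:Baxter}): the hypothesis $\kappa<d/(1-d)$ forces $-d+\kappa(1-d)<0$, while $\sum_{t>[n^{\kappa}]}\|y_t\|\to0$ as $n\to\infty$ because $\mathbf{y}\in\mathcal{B}^{\infty\times q}$; hence the right-hand side of (\ref{eq:BaxterB}) tends to $0$. I expect the only genuinely delicate ingredient to be the uniform column-sum bound $C_0<\infty$ for $T_\infty(w)^{-1}$ (from which the corresponding bound for $T_n(w)^{-1}$ follows via Theorem~\ref{thm:Bound5}); the rest is the triangle inequality together with the $\Delta_n^{s,t}$ estimates of Section~\ref{sec:3}. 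That bound, however, is essentially a by-product of the backward autoregressive representation (\ref{eq:Tinftyinv973}) and the decay $O(k^{-1-d})$ of those coefficients under (F$_d$), so it should require no extra work beyond what is developed in Section~\ref{sec:4}.
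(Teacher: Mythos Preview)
Your proof is correct and follows essentially the same route as the paper: there (Lemma~\ref{lemma:I12nB} and the short argument after it) one writes $\sum_{s}\|z_s-z_{s,n}\|\le I_n+J_n$ with $I_n=\sum_{t=1}^{n}C_{t,n}\|y_t\|$, splits $I_n$ at $t=[n^{\kappa}]$, and applies Theorem~\ref{thm:Bound4}, Theorem~\ref{thm:Bound5}, and Proposition~\ref{prop:Tinfinv123} (which is exactly your $C_0<\infty$)---this is your three-piece decomposition after regrouping. Your middle-piece bound $O(n^{-d})$ via $\sum_t\|y_t\|<\infty$ is in fact sharper than the paper's $O(n^{-d+\kappa(1-d)})$ (the paper uses only $\sup_t\|y_t\|<\infty$ there); the parenthetical alternative through Theorem~\ref{thm:B-type1} is unnecessary and, as stated, slightly off, since the implicit constant in that theorem scales with the $\mathcal{A}_\rho$-norm of $\mathbf{y}^{(1)}$, which is of order $m^{\rho}=n^{\kappa\rho}$.
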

The proofs of Theorems \ref{thm:B-type1} and \ref{thm:B-type2} will be given in Section \ref{sec:9}.

Now, we examine the relationship between Baxter's inequality as in (\ref{eq:pred-234}) and 
our Baxter-type theorems. For this purpose, we define
\begin{equation} \label{eq:wtilde}
\tilde{w}(e^{i\theta}):=w(e^{-i\theta}).
\end{equation} Then, 
%for $d\in (0,1/2)$, $\tilde{w}$ satisfies ($\mathrm{F}_d$) 
%if and only if $w$ does as well for the same $d$. For this fact, see \cite{IKP2}, Lemma 6.3 and page 1222, 
%where the arguments in the proof of Theorem 10.1 in \cite{Roz}, Chapter I, are used. 
%If $w$ is the spectral density of a $q$-variate ARFIMA process $\{X_k\}$, then 
$\tilde{w}$ is the spectral density of 
the time-reversed process $\{\tilde{X}_k\}$ defined by 
$\tilde{X}_k := X_{-k}$ for $k \in \Z$, hence 
$\tilde{w}$ satisfies ($\mathrm{F}_d$) if and only if $w$ does as well for the same $d$. See \cite{IKP2}, Section 6.2.

It is well-known that the forward infinite and finite predictor coefficients are embedded in the first columns of $T_\infty(\tilde{w})^{-1}$ and $T_{n+1}(\tilde{w})^{-1}$, respectively. That is, the corresponding Wiener--Hopf solutions $\mathbf{z} = T_\infty(\tilde{w})^{-1} \mathbf{y}$ and $\mathbf{z}_{n+1} = T_{n+1}(\tilde{w})^{-1} \mathbf{y}_{n+1}$ for $\mathbf{y} = (I_q, O_q, O_q, \dots)^\top$, where $I_q$ and $O_q$ denote the $q\times q$ identity matrix and zero matrix, are respectively given by
\begin{equation}\label{eq:zandzn}
\mathbf{z}^\top =  
( v_{\infty}^{-1}, v_{\infty}^{-1} \phi_{1}, v_{\infty}^{-1} \phi_{2}, \dots)
\quad \text{and} \quad 
\mathbf{z}_{n+1}^\top = 
( v_{n+1}^{-1}, v_{n+1}^{-1} \phi_{n,1}, \dots, v_{n+1}^{-1} \phi_{n,n} ).
\end{equation}
Here, $v_\infty \in \C^{q\times q}$ and $v_{n+1} \in \C^{q\times q}$ are the infinite and finite prediction error covariances, respectively (see e.g., \cite{IKP2}, equations (5.3) and (6.16)). 
Using Theorem \ref{thm:B-type1} and the above expressions, 
we can give an alternative proof of Baxter's inequality under (F${}_d$) with $d\in (0,1/2)$.

\begin{corollary} \label{cor:baxter}
We assume (F${}_d$) for $d\in (0,1/2)$. Then, (\ref{eq:pred-234}) holds.
\end{corollary}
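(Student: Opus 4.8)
The plan is to deduce Corollary \ref{cor:baxter} directly from Theorem \ref{thm:B-type1} by specializing the output sequence to $\mathbf{y} = (I_q, O_q, O_q, \dots)^\top$. Since $w$ satisfies (F${}_d$) with $d \in (0,1/2)$, so does $\tilde{w}$ defined in (\ref{eq:wtilde}), by the cited facts from \cite{IKP2}; hence Theorem \ref{thm:B-type1} applies with $w$ replaced by $\tilde{w}$. The first step is to observe that $\mathbf{y} = (I_q, O_q, O_q, \dots)^\top$ lies in $\mathcal{A}_{\rho}^{\infty\times q}$ for every $\rho \in (0,\infty)$, since $\sup_{k}k^{\rho}\|y_k\| = \|I_q\| = 1 < \infty$ trivially (all terms with $k\ge 2$ vanish). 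In particular we may choose $\rho > 1-d$, so that the third branch of (\ref{eq:BaxterA}) gives
\[
\sum_{k=1}^{n+1} \|z_k - z_{k,n+1}\| = O(n^{-d}) \longrightarrow 0, \qquad n\to\infty,
\]
where here $\mathbf{z}$ and $\mathbf{z}_{n+1}$ are the Wiener--Hopf solutions associated with $\tilde{w}$.

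The second step is to translate this convergence into the predictor coefficient statement (\ref{eq:pred-234}). Using the expressions (\ref{eq:zandzn}), the components of $\mathbf{z}$ and $\mathbf{z}_{n+1}$ are $z_1 = v_\infty^{-1}$, $z_{k+1} = v_\infty^{-1}\phi_k$ for $k\ge 1$, and $z_{1,n+1} = v_{n+1}^{-1}$, $z_{k+1,n+1} = v_{n+1}^{-1}\phi_{n,k}$ for $k \in \{1,\dots,n\}$. From the $k=1$ term of the convergent series we get $\|v_{n+1}^{-1} - v_\infty^{-1}\| \to 0$, and since $v_\infty$ is invertible (a consequence of (F${}_d$)), it follows that $v_{n+1} \to v_\infty$ and $v_{n+1}^{-1}$ is bounded in $n$. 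Writing
\[
\phi_{n,k} - \phi_k = v_{n+1}\bigl(v_{n+1}^{-1}\phi_{n,k} - v_\infty^{-1}\phi_k\bigr) + v_{n+1}\bigl(v_\infty^{-1} - v_{n+1}^{-1}\bigr)\phi_k,
\]
we bound
\[
\sum_{k=1}^{n}\|\phi_{n,k} - \phi_k\| \le \|v_{n+1}\|\sum_{k=1}^{n}\|v_{n+1}^{-1}\phi_{n,k} - v_\infty^{-1}\phi_k\| + \|v_{n+1}\|\,\|v_\infty^{-1} - v_{n+1}^{-1}\|\sum_{k=1}^{\infty}\|\phi_k\|.
\]
The first sum on the right is a tail of the convergent series above (shifted by one index) and hence tends to $0$; the factor $\|v_{n+1}\|$ is bounded. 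For the second term, $\sum_{k=1}^\infty \|\phi_k\| < \infty$ holds under (F${}_d$) (this is a standard property of the infinite-order autoregressive coefficients of an ARFIMA process with $d\in(0,1/2)$; see, e.g., the references in \cite{IKP2}), and $\|v_\infty^{-1} - v_{n+1}^{-1}\| \to 0$, so this term also vanishes. Combining gives (\ref{eq:pred-234}).

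The main obstacle is really just the bookkeeping in the second step: one must be careful about the index shift between the time-series indexing of $\{\phi_{n,k}\}_{k=1}^n$ and the vector indexing of $\mathbf{z}_{n+1} \in \C^{(n+1)q\times q}$, and one must invoke (without reproving) two standard facts about ARFIMA processes under (F${}_d$) — namely the invertibility of $v_\infty$ and the absolute summability $\sum_k \|\phi_k\| < \infty$ — both of which are available from \cite{IKP2} and the references therein. No genuinely new estimate is needed: Corollary \ref{cor:baxter} is a clean consequence of Theorem \ref{thm:B-type1} applied to $\tilde{w}$ with a large choice of $\rho$.
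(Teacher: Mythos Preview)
Your proposal is correct and follows essentially the same route as the paper: apply Theorem \ref{thm:B-type1} to $\tilde{w}$ with $\mathbf{y}=(I_q,O_q,\dots)^\top$ and a large $\rho$, then use the decomposition $\phi_{n,k}-\phi_k=v_{n+1}(v_{n+1}^{-1}\phi_{n,k}-v_\infty^{-1}\phi_k)+v_{n+1}(v_\infty^{-1}-v_{n+1}^{-1})\phi_k$ together with the boundedness of $\|v_{n+1}\|$ and $\sum_k\|\phi_k\|<\infty$. The only cosmetic difference is that the paper cites \cite{IKP2} directly for the boundedness of $\|v_{n+1}\|$, whereas you derive it from $v_{n+1}^{-1}\to v_\infty^{-1}$ and the invertibility of $v_\infty$; both are fine.
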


\begin{proof}
As we mentioned above, $\tilde{w}$ satisfies (F${}_d$) for the same $d$ as $w$. 
First, we note that $\mathbf{y} = (I_q, O_q, O_q, \dots)^\top \in \mathcal{A}_{\rho}^{\infty \times q}$ 
for any $\rho \in (0,\infty)$. Therefore, by applying Theorem \ref{thm:B-type1} to $T_{n+1}(\tilde{w})$ 
and using (\ref{eq:zandzn}), we have
\begin{equation} \label{eq:Vsum1}
\|v_{n+1}^{-1} - v_{\infty}^{-1} \| + \sum_{k=1}^{n} \|v_{n+1}^{-1} \phi_{n,k}- v_{\infty}^{-1} \phi_{k}\| 
= O(n^{-d}),
\quad n \to \infty.
\end{equation}
Next, by using an expansion similar to that in \cite{p:kre-11}, page 2124, we have
\begin{equation}  \label{eq:Vsum2}
\sum_{k=1}^{n} \|\phi_{n,k} - \phi_{k}\|
\leq \|v_{n+1}\| \sum_{k=1}^{n} \|v_{n+1}^{-1} \phi_{n,k}- v_{\infty}^{-1} \phi_{k}\|
+ \|v_{n+1}\| \|v_{n+1}^{-1}-v_{\infty}^{-1}\| \sum_{k=1}^{n} \| \phi_{k}\|.
\end{equation}
We note that the sequence $\{\|v_{n+1}\|\}$ is bounded (cf. \cite{IKP2}, page 1227) and 
that $\sum_{k=1}^{\infty} \| \phi_{k}\| <\infty$ holds by \cite{IKP2}, (2.16). 
Therefore, substituting (\ref{eq:Vsum1}) into (\ref{eq:Vsum2}), we get (\ref{eq:pred-234}).
\end{proof}

Note, however, that our results do not generalize to Baxter's inequality for multi-step predictor coefficients. In this case, alternative approaches of \cite{p:yang-24} are necessary.

\subsection{Alternative approximations of inverse Toeplitz matrices} \label{sec:alt}

To assess the accuracy of the approximations of inverse Toeplitz matrices, we first introduce some norms for block matrices.
Let $A_n \in \C^{qn \times qn}$ with $(A_n)^{s,t}\in \C^{q\times q}$ for $s,t \in \{1, \dots, n\}$. 
For $p \in [1,\infty]$, the block $\ell_p$-norm of $A_n$, denoted by $\|A_n\|_{p,\text{block}}^{q \times q}$, is defined as the usual  $\ell_p$ operator norm of the matrix $(\| (A_n)^{s,t}\|)_{1\leq s,t \leq n} \in \R^{n \times n}$. For example,
\begin{equation*}
\|A_n\|_{1,\text{block}}^{q \times q} := \sup_{1\leq t\leq n} \sum_{s=1}^{n} \|(A_n)^{s,t}\| \quad \text{and} \quad
\|A_n\|_{\infty,\text{block}}^{q \times q} := \sup_{1\leq s\leq n} \sum_{t=1}^{n} \|(A_n)^{s,t}\|.
\end{equation*}

For $n \in \N$, 
let $[T_\infty(w)^{-1}]_n:= ((T_\infty(w)^{-1})^{s,t})_{1\leq s,t \leq n} \in \C^{nq \times nq}$ 
be the truncation of $T_\infty(w)^{-1}$. Then, by Lemma \ref{lemma:CRequal333} and Theorem \ref{thm:Bound5}, 
under (F${}_d$) for $d\in (0,1/2)$, we have,
\begin{equation} \label{eq:block-norm1}
\text{for  $p \in \{1,\infty\}$,} \quad 
\| T_n(w)^{-1} - [T_\infty(w)^{-1}]_n\|_{p,\text{block}}^{q \times q} = O(1), \qquad n \to \infty.
\end{equation}
Inspecting Theorems \ref{thm:Bound2}, \ref{thm:Bound4} and \ref{thm:Bound5} (also, the numerical results displayed in the left panel of Figure \ref{fig:1}), we see that 
the $O(1)$ bound above is due to the large value of $\Delta_n^{s,t}$ 
when both $s$ and $t$ are close to $n$. To ameliorate this issue, we replace the right bottom corner of $[T_\infty(w)^{-1}]_n$ with the  ``superior'' elements. One key idea is to focus on the following identity in Proposition \ref{prop:time-reverse}:
\begin{equation}\label{eq:TnTtilde}
(T_n(w)^{-1})^{n+1-s,n+1-t} = (T_n(\tilde{w})^{-1})^{s,t}, \qquad n \in\N,~~ s,t \in \{1, \dots, n\}.
\end{equation}
Here $\tilde{w}$ is as in (\ref{eq:wtilde}). 
Thanks to (\ref{eq:TnTtilde}), we can approximate the right bottom corner of $T_n(w)^{-1}$ (which corresponds to the left top corner of $T_n(\tilde{w})^{-1}$) by the left top corner of $T_{\infty}(\tilde{w})^{-1}$. 
More precisely, for $\delta \in (0,1/2]$, we define the block Toeplitz matrix $\Omega_{n,\delta}(w) \in \C^{nq \times nq}$ by
\begin{equation}
(\Omega_{n,\delta}(w))^{s,t}
=
\begin{cases}
(T_{\infty}(w)^{-1})^{s,t}, \quad\qquad\qquad\qquad\qquad\, 
(s,t) \in H_{\delta}(n) \times H_{\delta}(n),  \\
(T_{\infty}(\tilde{w})^{-1})^{n+1-s,n+1-t}, 
\quad\qquad\qquad\, (s,t) \in T_{\delta}(n) \times  T_{\delta}(n), \\
\frac{1}{2} \left\{ (T_{\infty}(w)^{-1})^{s,t} +  (T_{\infty}(\tilde{w})^{-1})^{n+1-s,n+1-t} \right\},
\qquad \text{otherwise},
\end{cases}
\label{eq:Omega}
\end{equation}
where
\begin{equation}
H_{\delta}(n) := \{1, \dots,[\delta n]\}
\quad
\mbox{and}
\quad
T_{\delta}(n) := \{n-[\delta n]+1, \dots,n\}.
\label{eq:AdeltaBdelta}
\end{equation}
We note 
that the entries of $T_{\infty}(w)^{-1}$ and $T_{\infty}(\tilde{w})^{-1}$ are given in (\ref{eq:Tinftyinv973}) and (\ref{eq:Tinftyinvtilde}), respectively. Therefore, $\Omega_{n,\delta}(w)$ can be evaluated in terms of the forward and backward infinite-order autoregressive coefficients. 

The next theorem shows that $\Omega_{n,\delta}(w)$ approximates $T_n(w)^{-1}$ 
better than the truncation $[T_\infty(w)^{-1}]_n$ of $T_\infty(w)^{-1}$.

\begin{theorem} \label{thm:Omega}
We assume (F${}_d$) for $d\in (0,1/2)$. For $\delta \in (0,1/2]$, let $\Omega_{n,\delta}(w) \in \C^{nq \times nq}$ be defined as in (\ref{eq:Omega}). Then, the following two assertions hold:
\begin{itemize}
\item[(i)] $\Omega_{n,\delta}(w)$ is a Hermitian matrix for $n \in \N$.
\item[(ii)] For $p \in \{1,\infty\}$, $\|T_n(w)^{-1} - \Omega_{n,\delta}(w) \|_{p,\emph{block}}^{q\times q} = O(n^{-d})$
as $n \to \infty$.
\end{itemize}
\end{theorem}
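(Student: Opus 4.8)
\textbf{Proof proposal for Theorem \ref{thm:Omega}.}

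The plan is to establish (i) by a direct symmetry computation and (ii) by decomposing the block matrix $T_n(w)^{-1} - \Omega_{n,\delta}(w)$ into three regions according to the cases in (\ref{eq:Omega}), and bounding the block $\ell_1$ and $\ell_\infty$ norms on each region using the theorems of Section \ref{sec:3}.

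For part (i): since $T_n(w)^{-1}$ is Hermitian, by (\ref{eq:Tinvsa333}) the matrix $[T_\infty(w)^{-1}]_n$ is Hermitian, i.e. $(T_\infty(w)^{-1})^{s,t} = ((T_\infty(w)^{-1})^{t,s})^*$. Likewise $(T_\infty(\tilde w)^{-1})^{n+1-s,n+1-t} = ((T_\infty(\tilde w)^{-1})^{n+1-t,n+1-s})^*$. Because the index sets $H_\delta(n) \times H_\delta(n)$ and $T_\delta(n) \times T_\delta(n)$ are each symmetric under swapping $(s,t) \mapsto (t,s)$, and the ``otherwise'' region is also symmetric under this swap, the three cases in the definition (\ref{eq:Omega}) of $\Omega_{n,\delta}(w)$ are respected by transposition, and conjugate-transposing each case returns the same case. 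Hence $\Omega_{n,\delta}(w)^* = \Omega_{n,\delta}(w)$.

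For part (ii), by symmetry of the construction under $(s,t) \mapsto (n+1-s, n+1-t)$ combined with (\ref{eq:TnTtilde}), it suffices to treat the block $\ell_\infty$-norm (i.e.\ row sums); the block $\ell_1$-norm (column sums) follows by the same argument applied to $\tilde w$, or directly from Lemma \ref{lemma:CRequal333}. Fix a row index $s \in \{1,\dots,n\}$ and split $\sum_{t=1}^n \|(T_n(w)^{-1})^{s,t} - (\Omega_{n,\delta}(w))^{s,t}\|$ into the contribution from columns $t \in H_\delta(n)$, columns $t \in T_\delta(n)$, and the remaining middle columns. On $H_\delta(n) \times H_\delta(n)$ the summand is exactly $\Delta_n^{s,t}$, so when $s$ is also small, Theorem \ref{thm:Bound6} (applied with the roles of the summation variable as appropriate, using $R_{s,n}=C_{s,n}$) gives the partial sum $\sum_{t \in H_\delta(n)} \Delta_n^{s,t} = O(n^{-d})$ uniformly in $s$. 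On $T_\delta(n) \times T_\delta(n)$, using (\ref{eq:TnTtilde}) the summand equals $\|(T_n(\tilde w)^{-1})^{n+1-s,n+1-t} - (T_\infty(\tilde w)^{-1})^{n+1-s,n+1-t}\|$, which is the corresponding $\tilde\Delta_n$ for the symbol $\tilde w$; by Theorem \ref{thm:Bound6} applied to $\tilde w$ this partial sum over $t$ is again $O(n^{-d})$ uniformly. On the middle region $\{[\delta n]+1, \dots, n-[\delta n]\}$ the summand is bounded by $\frac12(\Delta_n^{s,t} + \tilde\Delta_n^{\,\cdot,\cdot})$, and here both $t$ and $n+1-t$ lie in the central band where Theorem \ref{thm:Bound3} gives $\Delta_n^{s,t} = O(n^{1-d} t^{-d}(n+1-t)^{-1})$; summing over this band (where $t \asymp n$ and $n+1-t \asymp n$) contributes $O(n^{1-d} \cdot n^{-d} \cdot n^{-1} \cdot n) = O(n^{1-2d})$, which is worse than $O(n^{-d})$ only when $1-2d > -d$, i.e.\ always — so this crude bound is insufficient and one must instead use the sharper summed bounds. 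The fix is to invoke Theorems \ref{thm:Bound2} and \ref{thm:Bound4} on the relevant pieces: for $t$ up to $n - [\delta' n]$ with a slightly smaller $\delta'$ we are in the regime of Theorem \ref{thm:Bound4} for the $\tilde w$-part and of Theorem \ref{thm:Bound2} or \ref{thm:Bound6} for the $w$-part; the key point is that on the middle band \emph{both} $\Delta_n^{s,t}$ (for the bottom indices) and $\tilde\Delta_n$ (for the top indices) admit $O(n^{-d}\cdot(\text{something summable/bounded}))$ control because each index is bounded away from the endpoint that causes trouble. I would organize this by choosing an auxiliary $\delta$ and applying Theorem \ref{thm:Bound6} to both $w$ and $\tilde w$ with that $\delta$, which directly yields $\sum_{s=1}^{[n\delta]} \Delta_n^{s,t} = O(n^{-d})$ and, after reindexing via (\ref{eq:TnTtilde}), $\sum_{s \in T_\delta(n)} \tilde\Delta = O(n^{-d})$; the complementary middle indices are handled by noting $\Omega_{n,\delta}$ was defined precisely so that the ``bad corner'' is excised.

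The main obstacle I anticipate is the bookkeeping in the middle region: one has to verify that after replacing $[T_\infty(w)^{-1}]_n$ by the averaged/swapped entries, every term in the row sum is genuinely $O(n^{-d})$ when summed, and in particular that the averaging in the ``otherwise'' case does not spoil the bound. The resolution should be that on the overlap region each of the two pieces $\Delta_n^{s,t}$ and $\tilde\Delta_n$ is individually small after summation — one because the relevant index is $\le [\delta n]$ (Theorem \ref{thm:Bound4}/\ref{thm:Bound6}), the other because the \emph{reflected} relevant index is $\le [\delta n]$ — so the average is controlled by the better of the two, and no cancellation is needed. Carrying this out carefully, with the right choice of the auxiliary parameter relative to the given $\delta \in (0,1/2]$, completes the proof.
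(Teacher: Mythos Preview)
Your overall strategy matches the paper's --- split by region and apply Theorems~\ref{thm:Bound4} and \ref{thm:Bound6} to both $w$ and $\tilde w$, with an auxiliary cutoff --- but the case analysis as written has genuine gaps. You fix a row $s$ and split by the column region of $t$, yet your treatment of ``$H_\delta(n)\times H_\delta(n)$'' and ``$T_\delta(n)\times T_\delta(n)$'' silently assumes $s$ lies in the same corner as $t$; the mixed pairings (e.g.\ $s\in T_\delta(n)$, $t\in H_\delta(n)$), which all fall under the ``otherwise'' clause of (\ref{eq:Omega}), are never addressed. Your middle-band argument is also left unresolved: you correctly note that Theorem~\ref{thm:Bound3} gives only $O(n^{1-2d})$, but you then invoke Theorems~\ref{thm:Bound2}, \ref{thm:Bound4}, \ref{thm:Bound6} without saying which one actually controls which piece.

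The paper's organization avoids these problems by fixing the \emph{column} $t$ and running three clean cases on the column sum $D_{t,n}$. If $t\in H_\delta(n)$, then for every $s\notin H_\delta(n)$ the pair $(s,t)$ is automatically ``otherwise'', so $D_{t,n}\le C_{t,n}+\tfrac12\sum_{s'\le n-[\delta n]}\tilde\Delta_n^{s',\,n+1-t}$ after (\ref{eq:TnTtilde}) and reindexing; the first term is $O(n^{-d})$ by Theorem~\ref{thm:Bound4} and the second by Theorem~\ref{thm:Bound6} applied to $\tilde w$, uniformly in the second index. The case $t\in T_\delta(n)$ is symmetric. For $t$ in the middle band $I_\delta(n)=\{[\delta n]+1,\dots,n-[\delta n]\}$, \emph{every} $(s,t)$ is ``otherwise'', so $2D_{t,n}\le C_{t,n}+\tilde C_{n+1-t,n}$; since both $t$ and $n+1-t$ lie below $[(1-\delta/2)n]$, Theorem~\ref{thm:Bound4} (not \ref{thm:Bound6}) with the enlarged cutoff $\tilde\delta=1-\delta/2$ bounds each by $O(n^{-d})$. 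The $p=\infty$ case follows from Hermitianity of $T_n(w)^{-1}-\Omega_{n,\delta}(w)$.
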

The proof of Theorem \ref{thm:Omega} is given below. 

\begin{remark}
Note that $\Omega_{n,\delta}(w)$ may not be a positive definite matrix. 
If positive-definiteness is required, a small ridge term $\lambda_n I_{nq}$ 
can be added to $\Omega_{n,\delta}(w)$, where $\{\lambda_n\}$ is a suitably chosen 
sequence of non-negative numbers. Then $\Omega_{n,\delta}(w) + \lambda_n I_{nq}$ 
becomes positive-definite, while substituting $\Omega_{n,\delta}(w)+\lambda_n I_{nq}$ 
for $\Omega_{n,\delta}(w)$ does not 
change the statement of Theorem \ref{thm:Omega}(ii). 
Details of this work will be investigated in the future research.
\end{remark}

%%%%%%%%%%%%%%%%%%%%%%%%%%%%%%%%%%%%%%%%%%%%%%%%%%%%%%%%%%%%%%%%%%%
%%%                     Section 4 
%%%%%%%%%%%%%%%%%%%%%%%%%%%%%%%%%%%%%%%%%%%%%%%%%%%%%%%%%%%%%%%%%%

\section{Key upper bound for $\Delta_n^{s,t}$}  \label{sec:4}

In this section, we derive a key upper bound for $\Delta_n^{s,t}$ in (\ref{eq:Delta}) 
(Theorem \ref{thm:Tnbound} below). 
The proofs of the theorems given in Section \ref{sec:3} are based on this bound.
The bound is given in terms of the $q\times q$ matrix sequence 
$\{\beta_k\}$ to be explained below.

We define terms before presenting our results. 
Let $\C^{m\times n}$ be the set of all complex $m\times n$ matrices. We write 
$\C^q$ for $\C^{q\times 1}$. 
The transpose and Hermitian conjugate of $a\in \C^{m\times n}$ 
are denoted by $a^{\top}$ and $a^*$, respectively. 
%The spectral norm of $a \in \C^{q\times q}$ is defined by
%\begin{equation*}
%\Vert a\Vert:=\sup_{u\in \C^q, \vert u\vert \le 1}\vert au\vert,
%\end{equation*}
%where $\vert u\vert:=(\sum_{i=1}^q\vert u^i\vert^2)^{1/2}$ denotes 
%the Euclidean norm of 
%$u=(u^1,\dots,u^q)^{\top}\in\C^q$. 
For $p\in [1,\infty)$, we write $\ell_{p}^{q\times q}$ for the space of all
$\C^{q\times q}$-valued sequences $\{a_k\}_{k=0}^{\infty}$ such that 
$\sum_{k=0}^{\infty} \Vert a_k\Vert^p<\infty$. Let 
%Let $\T:=\{z\in\C :\vert z\vert=1\}$ be the unit circle in $\C$ and 
$L_2(\T)$ be the space of Lebesgue measurable functions 
$g:\T\to\C$ such that 
$\int_{-\pi}^{\pi}\vert g(e^{i\theta})\vert^2 d\theta< \infty$. 
The Hardy class $H_2(\T)$ on $\T$ is the closed subspace of 
$L_2(\T)$ consisting of $g\in L_2(\T)$ such that
$\int_{-\pi}^{\pi}e^{im\theta} g(e^{i\theta})d\theta=0$ for all $m \in \N$. 
Let $H_2^{q\times q}(\T)$ be the space of $\C^{q\times q}$-valued functions on
$\T$ whose entries belong to $H_2(\T)$. 
Let $\D:=\{z\in\C : \vert z\vert<1\}$ be the open unit disk in $\C$. 
We write $H_2(\D)$ for the Hardy class on $\D$, consisting of 
holomorphic functions $g$ on $\D$ such that
$\sup_{r\in [0,1)}\int_{-\pi}^{\pi}\vert g(re^{i\theta})\vert^2\sigma(d\theta)<\infty$, where $\sigma$ is the normalized Lebesgue measure $d\theta/(2\pi)$ on $[-\pi,\pi)$. 
As usual, we identify each function $g$ in $H_2(\D)$ with its boundary function
$g(e^{i\theta}):=\lim_{r\uparrow 1} g(re^{i\theta})$, $\sigma$-a.e.,
in $H_2(\T)$.
%Let $\D:=\{z\in\C : \vert z\vert<1\}$ be the open unit disk in $\C$. 
%We write $H_2(\D)$ for the Hardy class on $\D$, consisting of 
%holomorphic functions $g$ on $\D$ such that
%$\sup_{r\in [0,1)}\int_{-\pi}^{\pi}\vert g(re^{i\theta})\vert^2\sigma(d\theta)<\infty$.
%As usual, we identify each function $g$ in $H_2(\D)$ with its boundary function
%$g(e^{i\theta}):=\lim_{r\uparrow 1} g(re^{i\theta})$, $\sigma$-a.e., in $H_2(\T)$. 
A function $h$ in $H_2^{q\times q}(\T)$ is called an \textit{outer function} if $\det h$ is a
$\C$-valued outer function (cf. \cite{KK}, Definition 3.1).
%, that is, $\det h$ satisfies $\log\vert \det h(0)\vert
%=\int_{-\pi}^{\pi}\log\vert \det h(e^{i\theta})\vert \sigma(d\theta)$. 

In what follows in this section, 
we assume that the symbol $w$ in (\ref{eq:gamma123}) satisfies 
(F${}_d$) for $d\in (0,1/2)$. 
For $g$ appearing in (F${}_d$), we take $g_{\sharp} : \T\to \C^{q\times q}$ satisfying the condition (C) and
\begin{equation}
g(e^{i\theta}) g(e^{i\theta})^*=g_{\sharp}(e^{i\theta})^*g_{\sharp}(e^{i\theta}), 
\quad 
\theta \in [-\pi, \pi)
\label{eq:decomp-g666}
\end{equation}
(see \cite{IKP2}, Section 6.2). We define two outer 
functions $h$ and $h_{\sharp}$ in $H_2^{q\times q}(\T)$ by
\begin{equation}
h(z)=(1-z)^{-d}g(z) \quad \mbox{and} \quad 
h_{\sharp}(z)=(1-z)^{-d}g_{\sharp}(z),
\label{eq:ratouter372}
\end{equation}
respectively. 
Then $w$ admits the following two decompositions:
\begin{equation}
w(e^{i\theta}) 
= h(e^{i\theta}) h(e^{i\theta})^*
= h_{\sharp}(e^{i\theta})^*h_{\sharp}(e^{i\theta})
, \quad
\text{$\sigma$-a.e.}
\label{eq:decomp888}
\end{equation}

We define another outer function $\tilde{h}$ in $H_2^{q\times q}(\T)$ by 
$\tilde{h}(z) := \{h_{\sharp}(\overline{z})\}^*$. 
Since both $h^{-1}$ and $\tilde{h}^{-1}$ also 
belong to $H_2^{q\times q}(\T)$, 
we can define two sequences $\{a_k\}$ and $\{\tilde{a}_k\}$ belonging 
to $\ell_{2}^{q\times q}$ by
\begin{equation} \label{eq:Wold}
-h(z)^{-1}=\sum_{k=0}^{\infty}z^ka_k
\quad \text{and} \quad
-\tilde{h}(z)^{-1}=\sum_{k=0}^{\infty}z^k\tilde{a}_k,
\quad z\in\D,
\end{equation}
respectively. 
By Proposition \ref{prop:aA-est123}(i) below, 
both $\{a_k\}$ and $\{\tilde{a}_k\}$ actually belong 
to $\ell_{1}^{q\times q}$. 
We call the coefficients $a_k$'s and $\tilde{a}_k$'s the {\it infinite-order forward and backward autoregressive coefficients}, respectively. Finally, we define a $\C^{q\times q}$-valued 
sequence $\{\tilde{A}_{k}\}_{k=0}^{\infty}$ by
\begin{equation}
\tilde{A}_k := -\sum_{u=k}^{\infty} \tilde{a}_u,\quad k\in \N\cup\{0\}.
\label{eq:tildeA656}
\end{equation}

\begin{proposition}\label{prop:aA-est123}
We assume $(\mathrm{F}_{d})$ for $d\in (0,1/2)$. 
Let $\{a_k\}$, $\{\tilde{a}_k\}$ and $\{\tilde{A}_{k}\}$ be as in (\ref{eq:Wold}) 
and (\ref{eq:tildeA656}). 
Then the following three assertions hold:
\begin{itemize}
\item[(i)] There exists $K_1\in (0,\infty)$ such that 
$\max( \Vert a_n \Vert, \Vert \tilde{a}_n \Vert ) \le K_1(n+1)^{-1-d}$ holds 
for $n\in\N$.
\item[(ii)] There exists $K_2 \in (0,\infty)$ such that 
$\Vert \tilde{A}_n\Vert \le K_2 n^{-d}$ holds for $n\in\N$.
\item[(iii)] $\sum_{u=0}^k \tilde{a}_u = \tilde{A}_{k+1}$ holds for $k\in\N\cup\{0\}$.
\end{itemize}
\end{proposition}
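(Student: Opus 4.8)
The plan is to first establish (i) using the rational structure of $g$ and $g_\sharp$ together with the known asymptotics of the Taylor coefficients of $(1-z)^d$. Write $h(z)^{-1} = (1-z)^d g(z)^{-1}$. Since $g$ satisfies (C), the entries of $g(z)^{-1}$ are rational functions with no poles in $\overline{\D}$ (the only possible poles come from zeros of $\det g$, which are excluded), hence $g(z)^{-1}$ extends holomorphically to a disk of radius $1+\varepsilon$ for some $\varepsilon>0$; consequently its Taylor coefficients decay geometrically. The Taylor coefficients of $(1-z)^d$ are $\binom{d}{k}(-1)^k$, which by the standard Gamma-function asymptotics satisfy $|\binom{d}{k}| \sim k^{-1-d}/|\Gamma(-d)|$, so they are $O((k+1)^{-1-d})$. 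Then $-h(z)^{-1} = \sum_k z^k a_k$ has $a_k$ equal to the convolution of a geometrically decaying sequence with an $O((k+1)^{-1-d})$ sequence; a routine convolution estimate gives $\|a_k\| = O((k+1)^{-1-d})$. The same argument applies verbatim to $\tilde h(z)^{-1} = (1-\bar\cdot)$-type expression: since $\tilde h(z) = \{h_\sharp(\bar z)\}^*$ and $h_\sharp(z) = (1-z)^{-d} g_\sharp(z)$ with $g_\sharp$ also satisfying (C), we likewise get $\|\tilde a_k\| = O((k+1)^{-1-d})$. Taking the larger constant yields $K_1$.

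For (ii), I would estimate $\tilde A_n = -\sum_{u=n}^\infty \tilde a_u$ directly by the triangle inequality and part (i):
\[
\|\tilde A_n\| \le \sum_{u=n}^\infty \|\tilde a_u\| \le K_1 \sum_{u=n}^\infty (u+1)^{-1-d} \le K_1 \int_{n}^{\infty} x^{-1-d}\,dx = \frac{K_1}{d}\, n^{-d},
\]
so (ii) holds with $K_2 = K_1/d$. (One should be slightly careful comparing the sum $\sum_{u\ge n}(u+1)^{-1-d}$ to the integral, but since $x \mapsto x^{-1-d}$ is decreasing this is immediate up to adjusting the constant.)

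For (iii), this is purely formal: by the definition (\ref{eq:tildeA656}),
\[
\tilde A_{k+1} = -\sum_{u=k+1}^{\infty}\tilde a_u, \qquad \tilde A_0 = -\sum_{u=0}^{\infty}\tilde a_u,
\]
and the series $\sum_u \tilde a_u$ converges absolutely by (i), so $\tilde A_0 - \tilde A_{k+1} = -\sum_{u=0}^{k}\tilde a_u$. Since $\tilde A_0 = -\tilde h(1)^{-1} \cdot$ (evaluated as $\sum_u \tilde a_u$) — more directly, by (\ref{eq:Wold}) the value $\sum_{u=0}^\infty z^u \tilde a_u = -\tilde h(z)^{-1}$ at $z=1$ requires justification, but one avoids this entirely by noting $\tilde A_0 = -\sum_{u=0}^\infty \tilde a_u$ is precisely the $k \to \infty$ limit of $-\sum_{u=0}^k \tilde a_u = \tilde A_0 - \tilde A_{k+1}$; letting $k\to\infty$ and using $\tilde A_{k+1}\to 0$ from (ii) shows consistency, and rearranging the finite identity gives $\sum_{u=0}^k \tilde a_u = \tilde A_0 - \tilde A_{k+1}$. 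It remains to identify $\tilde A_0$; but in fact the cleanest route is to simply observe $\tilde A_{k+1} - \tilde A_{k} = -(\,-\tilde a_k\,)$ is false sign-wise, so instead I would just telescope: $\tilde A_k - \tilde A_{k+1} = -\tilde a_k$ directly from (\ref{eq:tildeA656}), whence $\sum_{u=0}^{k}(\tilde A_u - \tilde A_{u+1}) = \tilde A_0 - \tilde A_{k+1} = -\sum_{u=0}^k \tilde a_u$; combined with the definition of $\tilde A_0$ this is (iii) after identifying $\tilde A_0$ with $-\sum_{u=0}^\infty \tilde a_u$. The main obstacle is item (i): getting the sharp exponent $-1-d$ rather than a cruder bound requires the precise $k^{-1-d}$ asymptotics of the binomial coefficients of $(1-z)^d$ and a careful (though standard) convolution-with-geometric-tail argument; parts (ii) and (iii) are then essentially bookkeeping.
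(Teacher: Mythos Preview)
Your treatment of (i) and (ii) is fine and matches the paper: for (i) the paper simply cites \cite{IKP2}, (6.23)--(6.24), whose proof is precisely the convolution argument you sketch (geometric decay of the coefficients of $g^{-1}$ against the $O(k^{-1-d})$ coefficients of $(1-z)^d$); for (ii) the paper also just says ``from (i)'', i.e.\ your tail-sum estimate.

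The gap is in (iii). Your telescoping correctly gives
\[
\sum_{u=0}^{k}\tilde a_u \;=\; \tilde A_{k+1} - \tilde A_0,
\]
so the statement $\sum_{u=0}^{k}\tilde a_u = \tilde A_{k+1}$ is \emph{equivalent} to $\tilde A_0 = -\sum_{u=0}^{\infty}\tilde a_u = 0$. You never establish this; your ``consistency'' argument via $k\to\infty$ is circular (it only recovers $\tilde A_0 = \tilde A_0$), and your final line ``combined with the definition of $\tilde A_0$ this is (iii)'' is exactly the step that does not follow. This is not bookkeeping---it is the actual content of (iii).

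The paper supplies the missing piece in one line: since $\tilde h(z) = \{h_\sharp(\bar z)\}^*$ and $h_\sharp(z)^{-1} = (1-z)^{d} g_\sharp(z)^{-1}$ with $g_\sharp$ satisfying (C), we have $h_\sharp(1)^{-1}=0$, hence
\[
\sum_{u=0}^{\infty}\tilde a_u \;=\; -\tilde h(1)^{-1} \;=\; -\bigl(h_\sharp(1)^{-1}\bigr)^* \;=\; 0.
\]
(Abel summation at $z=1$ is legitimate because $\{\tilde a_u\}\in\ell_1^{q\times q}$ by (i) and $g_\sharp^{-1}$ is continuous on $\overline{\D}$.) You were one step from this when you wrote ``$\tilde A_0 = -\tilde h(1)^{-1}$'' but then backed away instead of computing the right-hand side.
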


\begin{proof}
The assertion (i) follows from (6.23) and (6.24) in \cite{IKP2}, 
while (ii) from (i). Since 
$\sum_{k=0}^{\infty} \tilde{a}_k = -\tilde{h}(1)^{-1}
= - (h_{\sharp}(1)^{-1})^* =0$, 
we obtain (iii).
\end{proof}

We define a $\C^{q\times q}$-valued sequence $\{\beta_k\}_{k \in \Z}$ as the 
(negative of the) Fourier coefficients of the phase function 
$h^*h_{\sharp}^{-1}=h^{-1}h_{\sharp}^*$ (cf.\cite{IKP2}, Section 4):
\begin{equation}
\beta_k 
=
-\int_{-\pi}^{\pi}e^{-ik\theta} h(e^{i\theta})^* h_{\sharp}(e^{i\theta})^{-1} \frac{d\theta}{2\pi}, 
\quad k \in \Z.
\label{eq:beta-def667}
\end{equation}

For $n\in\mathbb{N}$, $u \in \{1,\dots,n\}$ and $k\in\mathbb{N}$, we can define 
the sequences 
$\{\tilde{b}_{n,u,\ell}^k\}_{\ell=0}^{\infty}\in \ell_{2}^{q\times q}$
by the recursion
\begin{equation}
\left\{
\begin{aligned}
\tilde{b}_{n,u,\ell}^1&=\beta_{n+1-u+\ell}^*,\\
\tilde{b}_{n,u,\ell}^{2k}
&=\sum_{m=0}^{\infty} \tilde{b}_{n,u,m}^{2k-1} \beta_{n+1+m+\ell},\quad
\tilde{b}_{n,u,\ell}^{2k+1}
=\sum_{m=0}^{\infty} \tilde{b}_{n,u,m}^{2k} \beta_{n+1+m+\ell}^*
\end{aligned}
\right.
\label{eq:til-recurs123}
\end{equation}
(cf. \cite{I23}, Section 2). 
Then, for $n \in \N$ and $s,t \in \{1,\dots,n\}$, we have
\begin{equation}
\begin{aligned}
\left(T_n(w)^{-1}\right)^{s,t}  - \left(T_{\infty}(w)^{-1}\right)^{s,t} 
= \sum_{k=1}^{\infty} 
\left\{ \sum_{u=1}^t 
\sum_{\ell = 0}^{\infty} a_{n + 1 - s + \ell}^* (\tilde{b}_{n,u,\ell}^{2k-1})^*   \tilde{a}_{t-u}
+ \sum_{u=1}^t \sum_{\ell = 0}^{\infty} \tilde{a}_{s + \ell}^* (\tilde{b}_{n,u,\ell}^{2k})^*  \tilde{a}_{t-u}\right\}
\end{aligned}
\label{eq:key-equality}
\end{equation}
(see \cite{I23}, Theorem 2.1 and Remark 2.1). 
Formula (\ref{eq:key-equality}) plays an important role below.

\begin{proposition}\label{prop:sum-by-parts134}
We assume (F${}_d$) for $d\in (0,1/2)$. Then, for $n, k\in\N$ and $t, \ell\in\N\cup\{0\}$, we have
\begin{equation*}
\sum_{u=0}^{t} \tilde{a}_u^* \tilde{b}^k_{n,t+1-u,\ell}
=\tilde{A}_{t+1}^* \tilde{b}^k_{n,1,\ell} 
- \sum_{u=0}^{t-1} \tilde{A}_{u+1}^* ( \tilde{b}^k_{n,t-u,\ell} - \tilde{b}^k_{n,t+1-u,\ell}),
\end{equation*}
where the convention $\sum_{u=0}^{-1} = 0$ is adopted in the sum on the right-hand side.
\end{proposition}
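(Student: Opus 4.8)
The plan is to recognize the claimed identity as a summation-by-parts (Abel) formula and to derive it from Proposition \ref{prop:aA-est123}(iii).

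First I would record that $\tilde{A}_0 = -\sum_{u=0}^{\infty}\tilde{a}_u = 0$, which is immediate from the proof of Proposition \ref{prop:aA-est123}, where it is shown that $\sum_{u=0}^{\infty}\tilde{a}_u = 0$. Combined with Proposition \ref{prop:aA-est123}(iii), which gives $\sum_{u=0}^{k}\tilde{a}_u = \tilde{A}_{k+1}$ for $k\in\N\cup\{0\}$, this yields the telescoping relation $\tilde{a}_u = \tilde{A}_{u+1} - \tilde{A}_u$ for all $u\in\N\cup\{0\}$.

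Next I would substitute this relation into the left-hand side and carry out the standard Abel summation. Abbreviating $c_u := \tilde{b}^k_{n,t+1-u,\ell}$ for $0\le u\le t$ (so that $c_t = \tilde{b}^k_{n,1,\ell}$), we obtain
\[
\sum_{u=0}^{t}\tilde{a}_u^{*}\,\tilde{b}^k_{n,t+1-u,\ell}
= \sum_{u=0}^{t}\tilde{A}_{u+1}^{*}c_u - \sum_{u=0}^{t}\tilde{A}_u^{*}c_u
= \sum_{u=0}^{t}\tilde{A}_{u+1}^{*}c_u - \sum_{u=1}^{t}\tilde{A}_u^{*}c_u,
\]
where the $u=0$ term of the second sum was dropped because $\tilde{A}_0 = 0$. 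Re-indexing the first sum so that it runs from $1$ to $t+1$ and peeling off its top term $\tilde{A}_{t+1}^{*}c_t$, the remaining two sums combine into $\sum_{u=1}^{t}\tilde{A}_u^{*}(c_{u-1}-c_u)$; re-indexing this so the summation index runs from $0$ to $t-1$ turns it into $\sum_{u=0}^{t-1}\tilde{A}_{u+1}^{*}(c_u - c_{u+1})$. Finally, unwinding the abbreviation gives $c_u - c_{u+1} = \tilde{b}^k_{n,t+1-u,\ell} - \tilde{b}^k_{n,t-u,\ell} = -(\tilde{b}^k_{n,t-u,\ell} - \tilde{b}^k_{n,t+1-u,\ell})$, which together with the boundary term $\tilde{A}_{t+1}^{*}c_t = \tilde{A}_{t+1}^{*}\tilde{b}^k_{n,1,\ell}$ is exactly the right-hand side of the claimed identity. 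The degenerate case $t = 0$ is checked directly: both sides reduce to $\tilde{A}_1^{*}\tilde{b}^k_{n,1,\ell}$, using $\tilde{a}_0 = \tilde{A}_1$ and the convention $\sum_{u=0}^{-1} = 0$.

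The argument is entirely elementary, so I do not anticipate a real obstacle; the only points requiring care are the bookkeeping of the two index shifts, the use of $\tilde{A}_0 = 0$ to discard one boundary term, and keeping the sign of the difference $\tilde{b}^k_{n,t-u,\ell} - \tilde{b}^k_{n,t+1-u,\ell}$ straight. No convergence issues arise, since each $\tilde{b}^k_{n,u,\ell}$ is a fixed, well-defined matrix in $\C^{q\times q}$ (the $\ell_2^{q\times q}$ summability built into the recursion (\ref{eq:til-recurs123}) ensures the defining series converge) and every sum appearing in the proof is finite.
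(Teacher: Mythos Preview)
Your proof is correct and follows essentially the same summation-by-parts argument as the paper. The only cosmetic difference is that the paper applies the Abel summation formula in the form $\sum_{u=0}^t \tilde{a}_u^* c_u = (\sum_{u=0}^t \tilde{a}_u^*) c_t - \sum_{u=0}^{t-1}(\sum_{r=0}^u \tilde{a}_r^*)(c_{u+1}-c_u)$ and then invokes Proposition~\ref{prop:aA-est123}(iii), whereas you first rewrite $\tilde{a}_u = \tilde{A}_{u+1}-\tilde{A}_u$ and reindex; these are two presentations of the same computation.
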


\begin{proof}
By summation by parts and Proposition \ref{prop:aA-est123} (iii), we have
\[
\begin{aligned}
\sum_{u=0}^t \tilde{a}_u^* \tilde{b}^k_{n,t+1-u,\ell}
& = \left( \sum_{u=0}^t \tilde{a}_u^* \right) \tilde{b}^k_{n,1,\ell} 
- \sum_{u=0}^{t-1} \left( \sum_{r=0}^u \tilde{a}_r^* \right) ( \tilde{b}^k_{n,t-u,\ell} - \tilde{b}^k_{n,t+1-u,\ell})\\
&=\tilde{A}_{t+1}^* \tilde{b}^k_{n,1,\ell} 
- \sum_{u=0}^{t-1} \tilde{A}_{u+1}^* ( \tilde{b}^k_{n,t-u,\ell} - \tilde{b}^k_{n,t+1-u,\ell}).
\end{aligned}
\]
Thus the proposition follows.
\end{proof}

For $k,n\in \N$ and $s, t \in \{0,\dots,n-1\}$, we define
\begin{align}
S_{1,k}(n,s,t) 
&:=\sum_{\ell = 0}^{\infty} 
\frac{1}{(s + \ell +2)^{1 + d}(t+1)^d} \Vert \tilde{b}^k_{n,1,\ell} \Vert, 
\label{eq:S1k}\\
S_{2,k}(n,s,t) 
&:=  \sum_{\ell = 0}^{\infty} \sum_{u=0}^{t-1} 
\frac{1}{(s + \ell +2)^{1+d}(u + 1)^d} 
\Vert \tilde{b}^k_{n,t-u,\ell} - \tilde{b}^k_{n,t+1-u,\ell} \Vert.
\label{eq:S2k}
\end{align}

Here is the key upper bound for $\Delta_n^{s,t}$. 

\begin{theorem} \label{thm:Tnbound}
We assume (F${}_d$) for $d\in (0,1/2)$. Let $K_1$ and $K_2$ be as in 
Proposition \ref{prop:aA-est123}. 
Then, for $n \in \N$ and $s,t \in \{1, \dots,n\}$, we have
\begin{equation}
\begin{aligned}
\Delta_n^{s,t}
&\leq K_1 K_2 \sum_{k=1}^{\infty} \{ S_{1,2k-1}(n,n-s,t-1) + S_{2,2k-1}(n,n-s,t-1)\}
\\
& \quad\quad +
K_1 K_2 \sum_{k=1}^{\infty} \{ S_{1,2k}(n,s-1,t-1) + S_{2,2k}(n,s-1,t-1)\}.
\end{aligned}
\label{eq:key-estimate}
\end{equation}
\end{theorem}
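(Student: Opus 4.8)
The plan is to read (\ref{eq:key-estimate}) off directly from the series identity (\ref{eq:key-equality}) by estimating its two families of summands separately. Fix $k\in\N$ and write $P_1(k):=\sum_{u=1}^t\sum_{\ell=0}^\infty a_{n+1-s+\ell}^*(\tilde b_{n,u,\ell}^{2k-1})^*\tilde a_{t-u}$ and $P_2(k):=\sum_{u=1}^t\sum_{\ell=0}^\infty \tilde a_{s+\ell}^*(\tilde b_{n,u,\ell}^{2k})^*\tilde a_{t-u}$, so that by (\ref{eq:key-equality}) one has $(T_n(w)^{-1})^{s,t}-(T_\infty(w)^{-1})^{s,t}=\sum_{k=1}^\infty\{P_1(k)+P_2(k)\}$. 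In each of $P_1(k)$ and $P_2(k)$ the $u$-sum is finite, so I would interchange it with the $\ell$-sum and pull the $u$-independent factor (namely $a_{n+1-s+\ell}^*$, respectively $\tilde a_{s+\ell}^*$) out to the left. The remaining inner sum $\sum_{u=1}^t(\tilde b_{n,u,\ell}^{j})^*\tilde a_{t-u}$, with $j\in\{2k-1,2k\}$, has Hermitian conjugate $\sum_{u=1}^t\tilde a_{t-u}^*\tilde b_{n,u,\ell}^{j}$, which after the substitution $v=t-u$ equals $\sum_{v=0}^{t-1}\tilde a_v^*\tilde b_{n,t-v,\ell}^{j}$.

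The next step is to apply the summation-by-parts identity of Proposition \ref{prop:sum-by-parts134}, with $t-1$ in place of $t$ (admissible since $t\ge1$), to rewrite $\sum_{v=0}^{t-1}\tilde a_v^*\tilde b_{n,t-v,\ell}^{j}$ as $\tilde A_t^*\tilde b_{n,1,\ell}^{j}-\sum_{v=0}^{t-2}\tilde A_{v+1}^*(\tilde b_{n,t-1-v,\ell}^{j}-\tilde b_{n,t-v,\ell}^{j})$, and then to conjugate back. Since $t\le n$, every $\tilde b$-index occurring here lies in $\{1,\dots,n\}$, so one stays within the range on which the sequences $\{\tilde b_{n,u,\ell}^k\}_\ell$ are defined; the reason for passing through summation by parts is that it trades the coefficients $\tilde a_v$ — which include $\tilde a_0$, for which Proposition \ref{prop:aA-est123}(i) gives no bound — for the quantities $\tilde A_t$ and $\tilde A_{v+1}$ that are controlled by Proposition \ref{prop:aA-est123}(ii). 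Absolute convergence of the resulting $\ell$-series, hence the legitimacy of the interchange above, follows from the Cauchy--Schwarz inequality together with $\{(m+1)^{-1-d}\}_{m\ge0}\in\ell_2$ (here $2(1+d)>1$) and $\{\tilde b_{n,u,\ell}^k\}_\ell\in\ell_2^{q\times q}$.

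After substituting back, $P_1(k)=\sum_\ell a_{n+1-s+\ell}^*(\tilde b_{n,1,\ell}^{2k-1})^*\tilde A_t-\sum_\ell\sum_{v=0}^{t-2}a_{n+1-s+\ell}^*(\tilde b_{n,t-1-v,\ell}^{2k-1}-\tilde b_{n,t-v,\ell}^{2k-1})^*\tilde A_{v+1}$, and similarly for $P_2(k)$ with $a_{n+1-s+\ell}$ replaced by $\tilde a_{s+\ell}$ and $2k-1$ by $2k$. Taking norms, using the triangle inequality over $\ell$ and $v$, and bounding $\|a_{n+1-s+\ell}\|\le K_1(n-s+\ell+2)^{-1-d}$ and $\|\tilde a_{s+\ell}\|\le K_1(s-1+\ell+2)^{-1-d}$ by Proposition \ref{prop:aA-est123}(i), and $\|\tilde A_t\|\le K_2 t^{-d}$, $\|\tilde A_{v+1}\|\le K_2(v+1)^{-d}$ by Proposition \ref{prop:aA-est123}(ii), the sums that appear are exactly $S_{1,2k-1}(n,n-s,t-1)$ and $S_{2,2k-1}(n,n-s,t-1)$ for $P_1(k)$, and $S_{1,2k}(n,s-1,t-1)$ and $S_{2,2k}(n,s-1,t-1)$ for $P_2(k)$, by the definitions (\ref{eq:S1k})--(\ref{eq:S2k}); summing over $k$ and applying the triangle inequality once more yields (\ref{eq:key-estimate}). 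The main point requiring care is the bookkeeping: one has to track three simultaneous index shifts — $t\mapsto t-1$ in the third argument of $S_{i,k}$, $s\mapsto n-s$ for $P_1$ against $s\mapsto s-1$ for $P_2$ in the second argument, and the shift in the subscript of $\tilde A$ — and check that these are matched exactly by the offsets $+2$, $+1$, $+1$ built into $(s+\ell+2)^{1+d}$, $(t+1)^d$, $(u+1)^d$ in (\ref{eq:S1k})--(\ref{eq:S2k}) and by the index $n+1-s+\ell$ of $a$. No genuinely analytic obstacle arises here; the substantive work is the later bounds on $S_{1,k}$ and $S_{2,k}$.
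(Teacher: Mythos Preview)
Your proposal is correct and follows essentially the same route as the paper's proof: start from the identity (\ref{eq:key-equality}), reindex the inner $u$-sum via $v=t-u$, apply the summation-by-parts identity of Proposition \ref{prop:sum-by-parts134} with $t-1$ in place of $t$, and then bound termwise using Proposition \ref{prop:aA-est123}(i) for $a_{n+1-s+\ell}$, $\tilde a_{s+\ell}$ and (ii) for $\tilde A_t$, $\tilde A_{v+1}$, matching the offsets to the definitions (\ref{eq:S1k})--(\ref{eq:S2k}). Your explicit remarks on absolute convergence and on why summation by parts is needed are not in the paper but are welcome clarifications.
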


\begin{proof}
By (\ref{eq:key-equality}), 
$\Delta_n^{s,t} \le \sum_{k=1}^{\infty} \{ D_k(n,s,t) + E_k(n,s,t) \}$, 
where
\begin{align*}
D_k(n,s,t)
&:=\sum_{\ell = 0}^{\infty}
\Vert a_{n + 1 - s + \ell} \Vert \left\Vert \sum_{u=1}^t (\tilde{b}_{n,u,\ell}^{2k-1})^* \tilde{a}_{t-u} 
\right\Vert,\quad k\in\N,\\
E_k(n,s,t)
&:=
\sum_{\ell = 0}^{\infty}
\Vert \tilde{a}_{s + \ell} \Vert \left\Vert \sum_{u=1}^t (\tilde{b}_{n,u,\ell}^{2k})^* \tilde{a}_{t-u}
\right\Vert,\quad k\in\N.
\end{align*}
Using Proposition \ref{prop:aA-est123}(i) and the change of variable $u^\prime = t-u$, 
we have
\[
D_k(n,s,t) \leq
 K_1 \sum_{\ell = 0}^{\infty}
\frac{\Vert \sum_{u=1}^t \tilde{a}_{t-u}^* \tilde{b}_{n,u,\ell}^{2k-1} \Vert}{(n + 2 - s + \ell)^{1+d}}
=
K_1 \sum_{\ell = 0}^{\infty}
\frac{\Vert \sum_{u=0}^{t-1} \tilde{a}_{u}^* \tilde{b}_{n,(t-1)+1-u,\ell}^{2k-1}\Vert}{(n - s + \ell + 2)^{1+d}}.
\]
Using the reparameterizations $s^\prime = n-s$ and $t^\prime = t-1$ and 
Proposition \ref{prop:sum-by-parts134}, 
\[
\begin{aligned}
D_k(n,s,t)&\leq 
 K_1 \sum_{\ell = 0}^{\infty}
\frac{\Vert \sum_{u=0}^{t^\prime} \tilde{a}_{u}^* \tilde{b}_{n,t^\prime+1-u,\ell}^{2k-1} \Vert}{(s^\prime + 2 + \ell)^{1+d}} \\
&\leq
 K_1 \sum_{\ell = 0}^{\infty} 
\frac{\Vert \tilde{A}_{t^\prime+1} \Vert \Vert \tilde{b}^{2k-1}_{n,1,\ell} \Vert}{(s^\prime + \ell +2)^{1+d}} + K_1 \sum_{u=0}^{t^\prime-1} 
\frac{\Vert \tilde{A}_{u+1}\Vert \Vert \tilde{b}^{2k-1}_{n,t^\prime-u,\ell} - \tilde{b}^{2k-1}_{n,t^\prime+1-u,\ell} \Vert}{(s^\prime + \ell +2)^{1+d}} 
 \\
&\leq K_1 K_2 \left\{S_{1,2k-1}(n,s^\prime,t^\prime) + S_{2,2k-1}(n,s^\prime,t^\prime)\right\} \\
&= K_1 K_2 \left\{S_{1,2k-1}(n,n-s,t-1) + S_{2,2k-1}(n,n-s,t-1)\right\}.
\end{aligned}
\]
Similarly, 
$E_k(n,s,t) \leq K_1 K_2 \left\{S_{1,2k}(n,s-1,t-1) + S_{2,2k}(n,s-1,t-1)\right\}$. 
Combining, we obtain the desired result.
\end{proof}

%%%%%%%%%%%%%%%%%%%%%%%%%%%%%%%%%%%%%%%%%%%%%%%%
%%%            Section 5
%%%%%%%%%%%%%%%%%%%%%%%%%%%%%%%%%%%%%%%%%%%%%%%

\section{Bounds for sums consisting of $\beta_n$} \label{sec:5}

In this section, we prove some bounds for sums consisting of $\beta_n$'s. 
These bounds will be used to prove Theorems \ref{thm:Bound1} and \ref{thm:Bound3}.

\begin{proposition}\label{prop:beta123}
We assume (F${}_d$) for $d\in (0,1/2)$. 
\begin{enumerate}
\item There exists $K_3\in (0,\infty)$ such that the following two 
inequalities hold:
\begin{align}
\Vert \beta_n \Vert &\le K_3 (n+1)^{-1}, \quad n\in\N,
\label{eq:betaestim216}\\
\Vert \beta_{n+1} - \beta_n \Vert &\le K_3 (n+2)^{-2}, \quad n\in\N.
\label{eq:betaestim233}
\end{align}
\item For any $r\in (1,\infty)$, there exists $N_1\in\N$ such that
\begin{equation}
\Vert \beta_n \Vert \le \frac{r \sin(\pi d)}{\pi (n+1)}, \quad n\in \{ N_1, N_1+1, \dots \}.
\label{eq:betaestim327}
\end{equation}
\end{enumerate}
\end{proposition}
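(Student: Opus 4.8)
The plan is to exploit the explicit structure of the phase function $h^*h_\sharp^{-1} = h^{-1}h_\sharp^*$ coming from (\ref{eq:ratouter372}). Writing $h(z)=(1-z)^{-d}g(z)$ and $h_\sharp(z)=(1-z)^{-d}g_\sharp(z)$, the factors $(1-z)^{-d}$ and $(1-\bar z)^{-d}$ combine in the phase function so that
\[
h(e^{i\theta})^* h_\sharp(e^{i\theta})^{-1}
= \left(\overline{(1-e^{i\theta})^{-d}}\,(1-e^{i\theta})^{d}\right) g(e^{i\theta})^* g_\sharp(e^{i\theta})^{-1}
= \left(\frac{1-e^{-i\theta}}{1-e^{i\theta}}\right)^{d} g(e^{i\theta})^* g_\sharp(e^{i\theta})^{-1}.
\]
The scalar prefactor $\bigl((1-e^{-i\theta})/(1-e^{i\theta})\bigr)^{d} = e^{id(\pi-\theta)\,\mathrm{sgn}\,\theta}$ (up to the usual branch conventions) has Fourier coefficients of exact order $n^{-1}$ with leading constant involving $\sin(\pi d)/\pi$; this is the classical computation for the $\mathrm{ARFIMA}$ phase (cf.\ \cite{IKP2}, Section 4). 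The matrix factor $g(e^{i\theta})^* g_\sharp(e^{i\theta})^{-1}$ is, by condition (C), a rational function of $e^{i\theta}$ with no poles on $\T$, so it is real-analytic on $\T$ and its Fourier coefficients decay exponentially. Hence $\beta_n$ is, up to an exponentially small remainder, a convolution of the slowly-decaying scalar sequence with an exponentially-decaying matrix sequence.

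For part (1): First I would record the explicit asymptotics of the scalar Fourier coefficients, say $c_n$, of the prefactor: $\|c_n\| \le C(n+1)^{-1}$ and $\|c_{n+1}-c_n\|\le C(n+2)^{-2}$ (both follow from the standard integral representation / Watson's lemma type estimate, or directly from the known closed form $c_n = \Gamma(n+d)/(\Gamma(d)\Gamma(n+1))\cdot(\text{phase})$ whose ratios are $O(n^{-2})$-close). Next, since the matrix factor has exponentially decaying coefficients $\{m_k\}$ with $\sum_k \|m_k\| e^{\epsilon k}<\infty$ for some $\epsilon>0$, I would write $\beta_n = -\sum_k c_{n-k} m_k$ (indices over $\Z$, with $c_j$ the full two-sided sequence) and estimate: for $\|\beta_n\|$, split the sum at $k=n/2$; the tail $|k|>n/2$ is exponentially small, and on $|k|\le n/2$ one has $|n-k|\ge n/2$ so $\|c_{n-k}\|\le C'(n+1)^{-1}$, giving (\ref{eq:betaestim216}) after summing the absolutely convergent $\sum\|m_k\|$. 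For (\ref{eq:betaestim233}) one estimates $\beta_{n+1}-\beta_n = -\sum_k (c_{n+1-k}-c_{n-k}) m_k$ by the same splitting, using $\|c_{n+1-k}-c_{n-k}\|\le C'(n+2)^{-2}$ on the main range. Absorbing all constants gives a single $K_3$.

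For part (2): This is the sharper statement and requires tracking the constant, not just the order. I would use that $g(1)^* g_\sharp(1)^{-1} = I_q$ — this follows from (\ref{eq:decomp-g666}) evaluated at $\theta=0$ together with the outer normalization, which forces $g(1)^*g(1) = g_\sharp(1)^*g_\sharp(1)$ and then, since both are the unique positive definite square roots tied to the outer factors, $g(1)^*g_\sharp(1)^{-1}=I_q$ (this normalization point is exactly where \cite{IKP2} identifies the phase constant). Consequently the leading term of $\beta_n$ is $c_n \cdot I_q$ plus lower order, where $c_n \sim \sin(\pi d)/(\pi n)$ as $n\to\infty$. Given any $r>1$, choose $r' \in (1,r)$; there is $N'$ with $|c_n|\le r'\sin(\pi d)/(\pi(n+1))$ for $n\ge N'$, and there is $N''$ beyond which the correction terms (difference quotients of $c$ against the non-constant part of the matrix factor, all $O(n^{-2})$) are bounded by $(r-r')\sin(\pi d)/(\pi(n+1))$; take $N_1=\max(N',N'')$. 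The triangle inequality then yields (\ref{eq:betaestim327}).

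The main obstacle I anticipate is part (2): establishing that the leading constant is \emph{exactly} $\sin(\pi d)/\pi$ rather than merely $O(1/n)$, which hinges on the normalization $g(1)^*g_\sharp(1)^{-1}=I_q$ and on pinning down the branch of the $d$-th power in the phase factor so that its Fourier coefficient asymptotics carry the correct sign and constant. The matrix bookkeeping in part (1) is routine given the exponential decay of the rational factor's coefficients; the delicate point throughout is that the singular behaviour at $\theta=0$ is entirely carried by the scalar $(1-z)^{-d}$ pieces, so all estimates reduce to the well-understood scalar case convolved with a benign matrix sequence.
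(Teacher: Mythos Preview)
Your approach is sound and essentially reconstructs from scratch the asymptotic expansion that the paper simply quotes from \cite{IKP2}, Proposition~6.4: the paper writes $\beta_n = \rho_n(I_q + D_n)U$ with $\rho_n = \sin(\pi d)/\{\pi(n-d)\}$, $U := g(1)^* g_\sharp(1)^{-1}$, and $\|D_n\| \le \kappa n^{-1}$, and then reads off (\ref{eq:betaestim216}), (\ref{eq:betaestim327}) and (via $\rho_{n+1}-\rho_n = O(n^{-2})$) also (\ref{eq:betaestim233}) directly from that one line. Your convolution argument---scalar phase coefficients against the exponentially decaying Fourier coefficients of the rational factor $g^* g_\sharp^{-1}$---is precisely how one proves that expansion, so you are doing more work but arriving at the same place; the trade-off is self-containment versus brevity.

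There is, however, one genuine error in your part~(2): you assert $g(1)^* g_\sharp(1)^{-1} = I_q$, but this is false in general for $q>1$. Evaluating (\ref{eq:decomp-g666}) at $\theta=0$ gives $g(1)g(1)^* = g_\sharp(1)^* g_\sharp(1)$ (note the order; it is not $g(1)^* g(1) = g_\sharp(1)^* g_\sharp(1)$ as you wrote), and from this one can deduce only that $U := g(1)^* g_\sharp(1)^{-1}$ is \emph{unitary}: $U^*U = g_\sharp(1)^{-*} g(1) g(1)^* g_\sharp(1)^{-1} = I_q$. There is no outer normalization that forces $U = I_q$; $g$ and $g_\sharp$ need not be Hermitian at $1$. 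Fortunately this does not damage your argument at all: since $\|U\|=1$, the leading term $c_n U$ still has spectral norm $|c_n|\sim \sin(\pi d)/(\pi n)$, and your remainder estimate (write $\sum_k c_{n-k} m_k = c_n U + \sum_k (c_{n-k}-c_n) m_k$ and use $|c_{n-k}-c_n| = O(|k| n^{-2})$ on $|k|\le n/2$, exponential smallness elsewhere) goes through verbatim. Simply replace ``$I_q$'' by ``the unitary matrix $U$'' throughout and your proof of (\ref{eq:betaestim327}) is correct.
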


\begin{proof}
For $n\in\N$, $\rho_n:=\sin(\pi d)/\{\pi(n-d)\}$, and 
the unitary matrix $U:=g(1)^* g_{\sharp}(1)^{-1}\in \C^{q\times q}$, 
we define $D_n\in\C^{q\times q}$ by 
$\beta_n = \rho_n(I_q + D_n)U$. 
Then, by \cite{IKP2}, Proposition 6.4, there exists $\kappa\in (0,\infty)$ such that 
$\Vert D_n\Vert \le \kappa n^{-1}$ holds for $n\in\N$. 
Therefore, (\ref{eq:betaestim216}) and (\ref{eq:betaestim327}) hold 
for some $K_3\in (0,\infty)$ and $N_1\in\N$, respectively. 
Moreover, since
\[
\begin{aligned}
\Vert \beta_{n+1} - \beta_n \Vert
&= \Vert (\rho_{n+1} - \rho_n)U + \rho_{n+1} D_{n+1}U - \rho_n D_n U \Vert\\
&\le \vert \rho_{n+1} - \rho_n \vert + \vert\rho_{n+1}\vert \Vert D_{n+1}\Vert 
+ \vert \rho_n\vert \Vert D_n \Vert
\end{aligned}
\]
and $\rho_{n} - \rho_{n+1} = O(n^{-2})$ as $n\to\infty$, 
we obtain $\Vert \beta_{n+1} - \beta_n \Vert = O(n^{-2})$ as $n\to\infty$, 
hence (\ref{eq:betaestim233}) by replacing $K_3$ if necessary.
\end{proof}

For $m_1,\ell\in (0,\infty)$, we define
\begin{align}
&D_2(m_1,\ell) := \frac{1}{m_1+\ell},
\label{eq:D2}\\
&
D_k(m_1,\ell) 
:= \int_{0}^{\infty}dm_2 \cdots \int_{0}^{\infty} dm_{k-1} 
\frac{1}{\{\prod_{i=1}^{k-2} (m_{i}+m_{i+1})\}(m_{k-1} + \ell)},\ \ 
k\ge 3.
\label{eq:D3more}
\end{align}

\begin{proposition}\label{prop:btilineq111}
We assume (F${}_d$) for $d\in (0,1/2)$. Take $r\in (1,\infty)$, and let $K_3\in (0,\infty)$ and 
$N_1\in\N$ be as in Proposition \ref{prop:beta123}. 
Then, 
for $\ell\in (0,\infty)$, 
the following inequalities hold:
\begin{align}
&\Vert \tilde{b}^1_{n,1,[n \ell]} \Vert 
\le n^{-1}K_3 \frac{1}{1 + \ell},\quad n\in\N,
\label{eq:btilineq111}\\
&
\Vert \tilde{b}^k_{n,1,[n \ell]} \Vert 
\le n^{-1}K_3 \left( \frac{r\sin(\pi d)}{\pi} \right)^{k-1}
\int_0^{\infty} \frac{D_k(m_1,\ell)}{1+m_1}  dm_1, \ \ 
n\ge N_1, \ k\ge 2.
\label{eq:btilineq222}
\end{align}
\end{proposition}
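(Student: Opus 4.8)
The plan is to handle the two bounds separately. The first, (\ref{eq:btilineq111}), is immediate: by (\ref{eq:til-recurs123}) we have $\tilde{b}^1_{n,1,[n\ell]}=\beta^*_{n+[n\ell]}$, so $\Vert\tilde{b}^1_{n,1,[n\ell]}\Vert=\Vert\beta_{n+[n\ell]}\Vert\le K_3(n+[n\ell]+1)^{-1}$ by (\ref{eq:betaestim216}), and $n+[n\ell]+1>n+n\ell=n(1+\ell)$ gives the asserted bound for every $n\in\N$. For (\ref{eq:btilineq222}) my strategy is to unroll the recursion (\ref{eq:til-recurs123}) into an explicit multiple sum over $\beta$-indices and then compare that sum with the multiple integral defining the right-hand side.

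Fix $n\ge N_1$ and $k\ge 2$. A routine induction on $k$ using (\ref{eq:til-recurs123}) yields the absolutely convergent expansion
\begin{equation*}
\tilde{b}^k_{n,1,[n\ell]}=\sum_{j_1,\dots,j_{k-1}\ge 0}
\beta^{(*)}_{n+j_1}\,\beta^{(*)}_{n+1+j_1+j_2}\cdots\beta^{(*)}_{n+1+j_{k-2}+j_{k-1}}\,\beta^{(*)}_{n+1+j_{k-1}+[n\ell]},
\end{equation*}
where each $\beta^{(*)}$ denotes $\beta$ or $\beta^*$ according to parity (for $k=2$ the middle product is empty). Taking operator norms and using submultiplicativity, I would estimate the single factor by $\Vert\beta_{n+j_1}\Vert\le K_3(n+j_1+1)^{-1}$ using (\ref{eq:betaestim216}), and each of the remaining $k-1$ factors by $\frac{r\sin(\pi d)}{\pi}$ times the reciprocal of its index plus one, using (\ref{eq:betaestim327}); this is legitimate because every index that appears is $\ge n\ge N_1$. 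The result is $\Vert\tilde{b}^k_{n,1,[n\ell]}\Vert\le K_3\bigl(r\sin(\pi d)/\pi\bigr)^{k-1}\Sigma_k$, where
\begin{equation*}
\Sigma_k:=\sum_{j_1,\dots,j_{k-1}\ge 0}
\frac{1}{(n+j_1+1)\bigl[\prod_{i=1}^{k-2}(n+2+j_i+j_{i+1})\bigr](n+2+j_{k-1}+[n\ell])}.
\end{equation*}

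The crux is the bound $\Sigma_k\le n^{-1}\int_0^\infty D_k(m_1,\ell)(1+m_1)^{-1}\,dm_1$. The summand is positive and nonincreasing in each $j_i$ over the region $[-1,\infty)^{k-1}$ (where all denominator factors remain positive), so each term is dominated by the integral of the same function over the unit cube $\prod_i[j_i-1,j_i]$; summing over $(j_1,\dots,j_{k-1})$ gives $\Sigma_k\le\int_{[-1,\infty)^{k-1}}(\cdots)\,dx_1\cdots dx_{k-1}$. Then I would apply the affine substitution $x_i=nm_i-1$, which maps $[-1,\infty)^{k-1}$ onto $[0,\infty)^{k-1}$ with Jacobian $n^{k-1}$ and, after pulling a factor $n$ out of each of the $k$ denominator factors, turns $n+x_1+1$ into $n(1+m_1)$, each $n+2+x_i+x_{i+1}$ into $n(1+m_i+m_{i+1})$, and $n+2+x_{k-1}+[n\ell]$ into $n\bigl(m_{k-1}+1+(1+[n\ell])/n\bigr)$. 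Since $1+m_i+m_{i+1}\ge m_i+m_{i+1}$ and $(1+[n\ell])/n>\ell$ (as $[n\ell]>n\ell-1$), the integrand is $\le n^{-k}(1+m_1)^{-1}\bigl[\prod_{i=1}^{k-2}(m_i+m_{i+1})\bigr]^{-1}(m_{k-1}+\ell)^{-1}$; multiplying by the Jacobian $n^{k-1}$ and integrating out $m_2,\dots,m_{k-1}$ by Fubini identifies $\int_{[0,\infty)^{k-2}}\bigl[\prod_{i=1}^{k-2}(m_i+m_{i+1})\bigr]^{-1}(m_{k-1}+\ell)^{-1}\,dm_2\cdots dm_{k-1}$ with $D_k(m_1,\ell)$ by (\ref{eq:D2})--(\ref{eq:D3more}). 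This is exactly (\ref{eq:btilineq222}).

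The step I expect to be the main obstacle is precisely this last comparison. A naive rescaling $x_i=nm_i$ (without the $-1$ shift), or passing directly to the integral over $[0,\infty)^{k-1}$, leaves an unwanted remainder of order $n^{-2}$ and does not reproduce the clean integral; the correct device is the pairing of ``integer sum $\le$ integral over the shifted region $[-1,\infty)^{k-1}$'' with the recentering $x_i=nm_i-1$, which is what collapses the constants hidden in $n+j_1+1$ and $n+2+j_i+j_{i+1}$ into the harmless shifts $1+m_1$ and $1+m_i+m_{i+1}$, the terminal denominator then being absorbed via $[n\ell]>n\ell-1$. One also needs the multiple integrals $D_k(m_1,\ell)$ to be finite for $\ell>0$, so that Fubini applies and the bound is meaningful; this can be verified by the same substitution or by an easy induction on $k$.
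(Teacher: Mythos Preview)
Your argument is correct and follows essentially the same strategy as the paper: unroll the recursion (\ref{eq:til-recurs123}) into a $(k-1)$-fold sum of products of $\beta$'s, bound one factor by (\ref{eq:betaestim216}) and the remaining $k-1$ factors by (\ref{eq:betaestim327}), and then compare the resulting multiple sum with the integral defining $D_k$ via a rescaling by $n$. The only cosmetic difference is in the sum-to-integral step---the paper rewrites the sum as an integral of a step function and then substitutes $m_i\mapsto nm_i$, using $n+[nm_1]+1>n(1+m_1)$, $[nm_i]+[nm_{i+1}]+2>n(m_i+m_{i+1})$, $[nm_{k-1}]+[n\ell]+2>n(m_{k-1}+\ell)$, whereas you use monotonicity to pass to the integral over $[-1,\infty)^{k-1}$ and then the shift $x_i=nm_i-1$; both devices produce exactly the same bound.
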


\begin{proof}
We only prove (\ref{eq:btilineq222}) for $k = 3$; the other cases can be treated in the same
way. For $\ell\in (0,\infty)$ and $n\in \{N_1, N_1+1, \dots\}$, 
\[
\begin{aligned}
&(K_3)^{-1} \left( \frac{r\sin(\pi d)}{\pi} \right)^{-2} 
\Vert \tilde{b}^3_{n,1,[n \ell]} \Vert \\
&\le 
(K_3)^{-1} \left( \frac{r\sin(\pi d)}{\pi} \right)^{-2} 
\sum_{m_1=0}^{\infty} \sum_{m_2=0}^{\infty} 
\Vert \beta_{n+m_1} \Vert \Vert \beta_{n+1+m_1+m_2} \Vert \Vert \beta_{n+1+m_2+[n\ell]} \Vert\\
&\le 
\sum_{m_1=0}^{\infty} \sum_{m_2=0}^{\infty} 
\frac{1}{(n+m_1+1) (n+m_1+m_2+2) (n+m_2+[n\ell]+2)}\\
&\le 
\sum_{m_1=0}^{\infty} \sum_{m_2=0}^{\infty} 
\frac{1}{(n+m_1+1) (m_1+m_2+2) (m_2+[n\ell]+2)}.
\end{aligned}
\]
Here, the second inequality is by Proposition \ref{prop:beta123}. Therefore,

\[
\begin{aligned}
&(K_3)^{-1} \left( \frac{r\sin(\pi d)}{\pi} \right)^{-2} 
\Vert \tilde{b}^3_{n,1,[n \ell]} \Vert \\
&\le 
\sum_{m_1=0}^{\infty} \sum_{m_2=0}^{\infty} 
\frac{1}{(n+m_1+1) (m_1+m_2+2) (m_2+[n\ell]+2)} \\
&=\int_{0}^{\infty} dm_1 \int_{0}^{\infty} dm_2 
\frac{1}{(n + [m_1]+1) ([m_1] + [m_2] + 2) ([m_2] + [n\ell]+2)}\\
&= 
\int_{0}^{\infty} dm_1 \int_{0}^{\infty} dm_2 
\frac{n^2}{(n + [n m_1]+1) ([n m_1] + [n m_2] + 2) ([n m_2] + [n\ell]+2)}\\
&\le 
\int_{0}^{\infty} dm_1 \int_{0}^{\infty} dm_2 
\frac{n^{-1}}{(1 + m_1) (m_1 + m_2) (m_2 + \ell)}
=\int_0^{\infty} \frac{n^{-1}}{1+m_1} D_3(m_1,\ell) dm_1.
\end{aligned}
\]
The second equality is by the changes of variables $m_1' = n m_1$ and $m_2' = n m_2$, 
and the last inequality is by $n + [n m_1]+1 > n (1+m_1)$, 
$[n m_1] + [n m_2] + 2 > n (m_1 + m_2)$ and $[n m_2] + [n\ell]+2 > n(m_2 + \ell)$. Thus, we show (\ref{eq:btilineq222}) for $k = 3$.
\end{proof}

\begin{proposition}\label{prop:btilineq222}
We assume (F${}_d$) for $d\in (0,1/2)$. Take $r\in (1,\infty)$, and let $K_3\in (0,\infty)$ and 
$N_1\in\N$ be as in Proposition \ref{prop:beta123}. Then, 
for $t\in (0,1)$, $u\in (0,t)$, $\ell\in (0,\infty)$, and $n\in\{N_1,N_1+1,\dots\}$,
the following inequalities hold:
\begin{align}
&\Vert \tilde{b}^1_{n,[n t] - [n u],[n \ell]} - \tilde{b}^1_{n,[n t] + 1 - [n u],[n \ell]} \Vert 
\le n^{-2}K_3 (1 - t + u + \ell)^{-2},
\label{eq:btilineq333}\\
&
\begin{aligned}
&\Vert \tilde{b}^k_{n,[n t] - [n u],[n \ell]} - \tilde{b}^k_{n,[n t] + 1 - [n u],[n \ell]}\Vert\\&\quad\le n^{-2}K_3 \left( \frac{r\sin(\pi d)}{\pi} \right)^{k-1} 
\int_0^{\infty} \frac{D_k(m_1,\ell)}{(1 - t + u + m_1)^2}  dm_1,\quad k\ge 2.
\end{aligned}
\label{eq:btilineq444}
\end{align}
\end{proposition}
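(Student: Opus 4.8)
The plan is to follow the proof of Proposition \ref{prop:btilineq111} almost verbatim, the only new feature being that the outermost $\beta$-factor is now a first difference $\beta_{m+1}-\beta_m$ rather than a single $\beta_m$. First I would unwind the recursion (\ref{eq:til-recurs123}): for $k\ge 2$, $\tilde{b}^k_{n,u,j}$ is a $(k-1)$-fold sum over $j_1,\dots,j_{k-1}\ge 0$ of products of the form
\[
\beta_{n+1-u+j_1}^{(*)}\,\beta_{n+1+j_1+j_2}^{(*)}\cdots\beta_{n+1+j_{k-1}+j}^{(*)},
\]
where each superscript $(*)$ indicates a possible Hermitian conjugation, irrelevant for the spectral norm. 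The decisive point is that $u$ enters only through the first factor $\beta_{n+1-u+j_1}^{(*)}$. Writing $v:=[nt]-[nu]$ and $j:=[n\ell]$, the summand to be estimated is thus a product whose first factor is the difference $\beta_{n+1-v+j_1}^{(*)}-\beta_{n-v+j_1}^{(*)}$, all remaining factors being unchanged. For $k=1$ this reduces to $\|\beta_{(n-v+j)+1}-\beta_{n-v+j}\|$, which by (\ref{eq:betaestim233}) is at most $K_3(n-v+j+2)^{-2}$; since $0<u<t<1$ gives $[nt]\le nt$ and $[nu]\ge nu-1$, one has $n-v+j+2\ge n(1-t+u+\ell)$, and (\ref{eq:btilineq333}) follows.

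For $k\ge 2$ I would bound the difference factor by (\ref{eq:betaestim233})---legitimate because $t<1$ forces $[nt]\le n-1$, hence $n-v+j_1\ge 1$---and bound each of the remaining $k-1$ factors by (\ref{eq:betaestim327}), whose hypothesis is met since those indices are all $\ge n+1\ge N_1$. This gives
\[
\|\tilde{b}^k_{n,v,j}-\tilde{b}^k_{n,v+1,j}\|
\le K_3\Bigl(\tfrac{r\sin(\pi d)}{\pi}\Bigr)^{k-1}
\sum_{j_1,\dots,j_{k-1}\ge 0}\frac{1}{(n-v+j_1+2)^2}\,
\prod_{i=1}^{k-2}\frac{1}{n+2+j_i+j_{i+1}}\cdot\frac{1}{n+2+j_{k-1}+j}.
\]
Then, exactly as in the proof of Proposition \ref{prop:btilineq111}, I would write each sum as $\int_0^\infty(\,\cdot\,)([\mu_i])\,d\mu_i$, substitute $\mu_i=nm_i$ (picking up a Jacobian $n^{k-1}$), and use the elementary inequalities $(n-v+[nm_1]+2)^2\ge n^2(1-t+u+m_1)^2$, $\;n+2+[nm_i]+[nm_{i+1}]\ge n(m_i+m_{i+1})$, and $n+2+[nm_{k-1}]+[n\ell]\ge n(m_{k-1}+\ell)$ to extract $n^{-(k+1)}$ from the denominator. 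Since $n^{k-1}\cdot n^{-(k+1)}=n^{-2}$ and the remaining $(k-2)$-fold integral over $m_2,\dots,m_{k-1}$ is exactly $D_k(m_1,\ell)$ from (\ref{eq:D2})--(\ref{eq:D3more}), this establishes (\ref{eq:btilineq444}). As in the cited proof, it suffices to carry out the computation for one representative exponent, say $k=2$ (for which $D_2(m_1,\ell)=(m_1+\ell)^{-1}$), the other cases being identical.

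The step I expect to be most delicate is the bookkeeping: one must track which $\beta$-factor carries the crude estimate (\ref{eq:betaestim233}) and which carry the sharp estimate (\ref{eq:betaestim327}), check that every index occurring lies in the range where the chosen estimate is valid (this is where $0<u<t<1$ and $n\ge N_1$ are used), and verify that the floor-function inequalities yield precisely the exponent $-2$ on $(1-t+u+m_1)$ and precisely the prefactor $n^{-2}$, with no off-by-one loss.
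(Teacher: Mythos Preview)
Your proposal is correct and follows essentially the same route as the paper's proof: unwind the recursion, observe that the second index enters only through the outermost $\beta$-factor, bound that difference by (\ref{eq:betaestim233}) and the remaining $k-1$ factors by (\ref{eq:betaestim327}), then convert sums to integrals and rescale. The paper carries out the representative computation for $k=3$ rather than $k=2$, but the argument and the floor-function bookkeeping are otherwise identical to what you describe.
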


\begin{proof}
We only prove (\ref{eq:btilineq444}) for $k = 3$; the other cases can be treated in the same
way. For $t\in (0,1)$, $u\in (0,t)$, $\ell\in (0,\infty)$, and $n\in\{N_1,N_1+1,\dots\}$, 
in the same way as the proof of Proposition \ref{prop:btilineq111}, 
\[
\begin{aligned}
&(K_3)^{-1} \left( \frac{r\sin(\pi d)}{\pi} \right)^{-2} 
\Vert \tilde{b}^3_{n,[p t] - [p u],[n \ell]} - \tilde{b}^3_{n,[n t] + 1 - [n u],[n \ell]} \Vert \\
&\le 
(K_3)^{-1} \left( \frac{r\sin(\pi d)}{\pi} \right)^{-2} 
\sum_{m_1=0}^{\infty} \sum_{m_2=0}^{\infty} 
\Vert \beta_{n + 1 - [p t] + [p u] + m_1} - \beta_{n - [n t] + [n u] + m_1} \Vert \\
&\quad\quad\quad\quad\quad\quad\quad\quad
\times \Vert \beta_{n+1+m_1+m_2} \Vert \Vert \beta_{n+1+m_2+[n\ell]} \Vert\\
&\le 
\sum_{m_1=0}^{\infty} \sum_{m_2=0}^{\infty} 
\frac{1}{(n - [n t] + [n u] + m_1 + 2)^2 (n+m_1+m_2+2) (n+m_2+[n\ell]+2)}\\
&\le 
\sum_{m_1=0}^{\infty} \sum_{m_2=0}^{\infty} 
\frac{1}{ (n - [n t] + [n u] + m_1 + 2)^2(m_1+m_2+2) (m_2+[n\ell]+2)}\\
&\le 
\int_{0}^{\infty} dm_1 \int_{0}^{\infty} dm_2 
\frac{n^{-2}}{(1 - t + u + m_1)^2 (m_1 + m_2) (m_2 + \ell)}
= \int_0^{\infty} \frac{n^{-2}}{(1 - t + u + m_1)^2} D_3(m_1,\ell) dm_1.
\end{aligned}
\]
%where, in the last inequality, we have also used $[n t] \le n t$.
Thus, (\ref{eq:btilineq444}) holds for $k = 3$.
\end{proof}

\begin{proposition}\label{prop:btilineq223}
We assume (F${}_d$) for $d\in (0,1/2)$. Take $r\in (1,\infty)$, and let $K_3\in (0,\infty)$ and 
$N_1\in\N$ be as in Proposition \ref{prop:beta123}. Then, 
for $t\in (0,1)$, $u\in (0,t)$, $\ell\in (0,\infty)$, and $n\in\{N_1,N_1+1,\dots\}$,
the following inequalities hold:
\begin{align}
&\Vert \tilde{b}^1_{n,t - [u],[n \ell]} - \tilde{b}^1_{n,t + 1 - [u],[n \ell]} \Vert 
\le n^{-2}K_3 \{1 - (t/n) + (u/n) + \ell\}^{-2},
\label{eq:best171}\\
&
\begin{aligned}
&\Vert \tilde{b}^k_{n,t - [u],[n \ell]} - \tilde{b}^k_{n,t + 1 - [u],[n \ell]} \Vert \\
&\quad \le n^{-2} K_3 \left( \frac{r\sin(\pi d)}{\pi} \right)^{k-1}
\int_0^{\infty} \frac{D_k(m_1,\ell)}{\{1 - (t/n) + (u/n) + m_1\}^2}  dm_1,\quad k\ge 2.
\end{aligned}
\label{eq:best172}
\end{align}
\end{proposition}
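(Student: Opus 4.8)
The plan is to prove Proposition~\ref{prop:btilineq223} by essentially repeating the argument of Proposition~\ref{prop:btilineq222}, but tracking the indices more carefully because here only the second subscript of $\tilde{b}$ is of the form $[n\ell]$, while the first subscript is $t - [u]$ (a fixed quantity, not scaled by $n$) rather than $[nt] - [nu]$. As before it suffices to treat the case $k = 3$, since $k = 1$ follows directly from (\ref{eq:betaestim233}) and the higher $k$ are handled by the same iteration. First I would unwind the recursion (\ref{eq:til-recurs123}): the difference $\tilde{b}^3_{n,t-[u],[n\ell]} - \tilde{b}^3_{n,t+1-[u],[n\ell]}$ is a double sum over $m_1, m_2 \ge 0$ whose summand involves the difference $\|\beta_{n+1-(t-[u])+m_1} - \beta_{n+1-(t+1-[u])+m_1}\| = \|\beta_{n+2-t+[u]+m_1} - \beta_{n+1-t+[u]+m_1}\|$ times $\|\beta_{n+1+m_1+m_2}\|\,\|\beta_{n+1+m_2+[n\ell]}\|$.

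Next I would apply the pointwise bounds from Proposition~\ref{prop:beta123}: the first factor is $\le K_3 (n+1-t+[u]+m_1)^{-2}$ by (\ref{eq:betaestim233}) (noting $t\in(0,1)$ and $u\ge 0$ so the index $n-t+[u]+m_1$ is at least of order $n$ for $n$ large), and the two remaining factors are each $\le (r\sin(\pi d)/\pi)\,(\cdot)^{-1}$ by (\ref{eq:betaestim327}) once $n \ge N_1$. This bounds the double sum by
\[
K_3 \left(\frac{r\sin(\pi d)}{\pi}\right)^2 \sum_{m_1=0}^\infty \sum_{m_2=0}^\infty
\frac{1}{(n+1-t+[u]+m_1)^2\,(n+1+m_1+m_2)\,(n+1+m_2+[n\ell])}.
\]
Then, exactly as in Propositions~\ref{prop:btilineq111} and \ref{prop:btilineq222}, I would compare the double sum with the double integral, substitute $m_i' = n m_i$, and use the elementary inequalities $n+1-t+[u]+m_1' \ge n(1 - (t/n) + (u/n) + m_1)$ (here using $[u] \ge u$ and $t < 1$), $n+1+m_1'+m_2' \ge n(m_1+m_2)$, and $n+1+m_2'+[n\ell] \ge n(m_2+\ell)$. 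Collecting the three factors of $n^{-1}$ against the Jacobian $n^2$ produces the overall $n^{-2}$, and the $m_2$-integral is recognized as $D_3(m_1,\ell)$ via (\ref{eq:D3more}), yielding the stated bound (\ref{eq:best172}) for $k=3$.

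The only mildly delicate point — and the place where this differs from Proposition~\ref{prop:btilineq222} — is that the shift in the first index is now $1 - (t/n) + (u/n)$ rather than $1 - t + u$: since $t, u$ are no longer being multiplied by $n$, dividing the index bound by $n$ naturally introduces the factors $t/n$ and $u/n$. One should also check that the indices appearing as arguments of $\beta$ are genuinely positive integers in the relevant range so that Proposition~\ref{prop:beta123} applies; this holds because $t\in(0,1)$, $[u]\in\N\cup\{0\}$, and $n\ge N_1$ is large, so all indices are $\ge 1$. Beyond this bookkeeping the computation is routine, so I do not anticipate any genuine obstacle; the main care needed is simply in matching the floor-function indices to the continuous variables in the integral comparison.
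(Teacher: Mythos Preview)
Your approach is correct and is exactly what the paper does: the paper omits the proof, stating only that it is similar to that of Proposition~\ref{prop:btilineq222}. One small correction: your justification ``$[u]\ge u$'' is backwards (the floor satisfies $[u]\le u$), but the needed inequality $n-t+[u]+[nm_1]+2 \ge n\bigl(1-t/n+u/n+m_1\bigr)$ still holds because the ``$+2$'' compensates for the two floor defects, i.e.\ $[u]+[nm_1]+2 > u+nm_1$.
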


The proof is similar to that of Proposition \ref{prop:btilineq222} so is omitted.

%%%%%%%%%%%%%%%%%%%%%%%%%%%%%%%%%%%%%%%%%%
%%%           Section 6
%%%%%%%%%%%%%%%%%%%%%%%%%%%%%%%%%%%%%%%%%%%%%%

\section{Proof of Theorem \ref{thm:Bound1}}\label{sec:6}

In this and next sections, we use the real Hilbert space
\begin{equation}
L := L^2((0,\infty),dx)
\label{eq:L2def234}
\end{equation}
with inner product $(f_1,f_2)_L=\int_0^{\infty} f_1(x)f_2(x)dx$ and 
norm $\Vert f\Vert_L=(f,f)_L^{1/2}$. 
Let $T:L \to L$ be the bounded linear operator defined by
\begin{equation} \label{eq:Tfx}
Tf(x)=\int_0^{\infty} \frac{1}{x+u} f(u) du, \quad f\in L.
\end{equation}
By \cite{p:har-59}, Theorems 315 and 316, the operator norm $\Vert T\Vert$ of $T$ is given by
\begin{equation}
\Vert T\Vert = \pi.
\label{eq:opnormT956}
\end{equation}
By the Fubini--Tonelli theorem, we have, for non-negative $f\in L$,
\begin{equation}
T^{k-1}f(x) = \int_0^{\infty} D_k(x,y) f(y) dy,
\quad k\ge 2.
\label{eq:Dintegral111}
\end{equation}

Recall $S_{1,k}(n,s,t)$ and $S_{2,k}(n,s,t)$ from (\ref{eq:S1k}) and (\ref{eq:S2k}), 
respectively.

\begin{lemma}\label{lemma:Sestim321a}
We assume (F${}_d$) for $d\in (0,1/2)$. 
Take $r\in (1,\infty)$, and let $K_3\in (0,\infty)$ and $N_1\in \N$ be as in 
Proposition \ref{prop:beta123}. Then, for $k\in\N$, $\ n\in \{N_1, N_1+1,\dots\}$ 
and $s\in \{0,\dots, n-1\}$, we have
\begin{equation}
\sum_{t=0}^{n-1} \{ S_{1,k}(n,s,t) + S_{2,k}(n,s,t) \}  \\
\leq
\frac{\pi K_3 \{r\sin(\pi d)\}^{k-1} n^{(1/2)-d}}{(1-4d^2)^{1/2}(s+1)^{(1/2)+d}}.
\label{eq:sumT}
\end{equation}
\end{lemma}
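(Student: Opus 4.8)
The plan is to bound the two sums $\sum_{t=0}^{n-1}S_{1,k}(n,s,t)$ and $\sum_{t=0}^{n-1}S_{2,k}(n,s,t)$ separately and then add the estimates. The overall strategy is to convert the discrete double sums defining $S_{1,k}$ and $S_{2,k}$ into Riemann-sum comparisons against integrals, so that the operator $T$ on $L=L^2((0,\infty),dx)$ and its norm $\Vert T\Vert=\pi$ from \eqref{eq:opnormT956} can be brought to bear. The key inputs are Propositions \ref{prop:btilineq111} and \ref{prop:btilineq222}, which give pointwise bounds on $\Vert\tilde b^k_{n,1,[n\ell]}\Vert$ and on the differences $\Vert\tilde b^k_{n,[nt]-[nu],[n\ell]}-\tilde b^k_{n,[nt]+1-[nu],[n\ell]}\Vert$ in terms of the kernels $D_k(m_1,\ell)$, together with the representation \eqref{eq:Dintegral111} of $D_k$ as an iterated kernel of $T$.

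First I would treat $S_{1,k}(n,s,t)$. Writing $\ell\mapsto [n\ell]$ and $t\mapsto [nt']$ as in the propositions, the factor $(s+\ell+2)^{-1-d}$ becomes comparable to $n^{-1-d}(\,(s/n)+\ell\,)^{-1-d}$, the factor $(t+1)^{-d}$ to $n^{-d}((t/n))^{-d}=n^{-d}t'^{-d}$, and $\Vert\tilde b^k_{n,1,[n\ell]}\Vert$ is bounded using \eqref{eq:btilineq111}--\eqref{eq:btilineq222} by $n^{-1}K_3\{r\sin(\pi d)/\pi\}^{k-1}\int_0^\infty D_k(m_1,\ell)(1+m_1)^{-1}\,dm_1$. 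Summing over $\ell$ and $t$ and passing to integrals, $\sum_{t=0}^{n-1}S_{1,k}(n,s,t)$ is comparable to
\[
n^{-1-d}\cdot n^{-d}\cdot n^{-1}\cdot n^{2}\,K_3\Bigl(\frac{r\sin(\pi d)}{\pi}\Bigr)^{k-1}
\int_0^\infty t'^{-d}\,dt'\int_0^\infty \frac{d\ell}{(\sigma+\ell)^{1+d}}\int_0^\infty\frac{D_k(m_1,\ell)}{1+m_1}\,dm_1,
\]
where $\sigma=s/n$ and the $n^{2}$ comes from the two changes of variables $\ell\mapsto n\ell$, $t\mapsto nt'$. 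The inner $\ell$- and $m_1$-integrals are exactly of the form $\int f_1\,(T^{k-1}f_2)$ with $f_1(\ell)=(\sigma+\ell)^{-1-d}$ and $f_2(m_1)=(1+m_1)^{-1}$ — except that the extra $t'$-integral must be handled; here one exploits that the $t$-sum in $S_{2,k}$ carries an extra difference structure rather than a free $t'^{-d}$, so the genuinely dangerous term is $S_{2,k}$, and $S_{1,k}$ after the $t'$-integration contributes only the lower-order boundary piece. More precisely, the $t'$-integral of $t'^{-d}$ over a bounded range is finite, and combining $S_{1,k}$ with the telescoping in the summation-by-parts identity (Proposition \ref{prop:sum-by-parts134}, already used in Theorem \ref{thm:Tnbound}) shows the $S_{1,k}$ part is dominated by the $S_{2,k}$ part up to constants.

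For $S_{2,k}(n,s,t)$, which is the main term, I would use \eqref{eq:btilineq333}--\eqref{eq:btilineq444}: the difference $\Vert\tilde b^k_{n,t-u,\ell}-\tilde b^k_{n,t+1-u,\ell}\Vert$ contributes $n^{-2}K_3\{r\sin(\pi d)/\pi\}^{k-1}\int_0^\infty D_k(m_1,\ell)(1-\tau+\upsilon+m_1)^{-2}\,dm_1$ with $\tau=t/n$, $\upsilon=u/n$. The triple sum over $\ell$, $u$, $t$ becomes, after $\ell\mapsto n\ell$, $u\mapsto nu$, $t\mapsto nt$ (three substitutions, factor $n^{3}$), comparable to
\[
n^{-1-d}\, n^{-d}\, n^{-2}\, n^{3}\, K_3\Bigl(\frac{r\sin(\pi d)}{\pi}\Bigr)^{k-1}
\int_0^1\! d\tau\!\int_0^\tau\! \upsilon^{-d}\,d\upsilon\!\int_0^\infty\!\frac{d\ell}{(\sigma+\ell)^{1+d}}\!\int_0^\infty\!\frac{D_k(m_1,\ell)\,dm_1}{(1-\tau+\upsilon+m_1)^2},
\]
i.e. $n^{(1/2)-d}$ times a constant once one checks the power of $n$ collapses to $(1/2)-d$ after the $\sigma$-dependence is extracted as $(s+1)^{-(1/2)-d}$ (the $\ell$-integral $\int_0^\infty(\sigma+\ell)^{-1-d}\ell^{?}$-type scaling produces the $\sigma^{-(1/2)-d}\sim n^{(1/2)+d}(s+1)^{-(1/2)-d}$). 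The inner double integral in $\ell,m_1$ is $(f_1,T^{k-1}f_2)_L\le\Vert f_1\Vert_L\,\Vert T\Vert^{k-1}\Vert f_2\Vert_L=\pi^{k-1}\Vert f_1\Vert_L\Vert f_2\Vert_L$ with $f_1(\ell)=(\sigma+\ell)^{-1-d}$ and $f_2(m_1)=(1-\tau+\upsilon+m_1)^{-2}$; one computes $\Vert f_1\Vert_L^2=c\,\sigma^{-1-2d}$ and $\Vert f_2\Vert_L$ is bounded uniformly (integrably in $\tau,\upsilon$), and the remaining $\tau,\upsilon$-integrals of $\upsilon^{-d}$ against these $L^2$-norms are finite, producing the constant $(1-4d^2)^{-1/2}$ after collecting the Beta-function values. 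Tracking the power of $r\sin(\pi d)$ carefully gives exactly $\{r\sin(\pi d)\}^{k-1}$ in \eqref{eq:sumT} (one factor $r\sin(\pi d)/\pi$ per level from Proposition \ref{prop:btilineq222}, with the $(k-1)$ copies of $\pi$ from $\Vert T\Vert^{k-1}$ cancelling all but leaving the stated form after also absorbing a stray $\pi$).

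The main obstacle is the bookkeeping of the powers of $n$ and the scaling of the $\ell$-integral: one must verify that after the changes of variables and the Riemann-sum comparisons, the $s$-dependence is precisely $(s+1)^{-(1/2)-d}$ and the $n$-dependence is precisely $n^{(1/2)-d}$, with the residual constant finite and equal to $\pi/(1-4d^2)^{1/2}$ (up to the $K_3$ and $\{r\sin(\pi d)\}^{k-1}$ factors); this requires careful use of $\int_0^\infty(\sigma+\ell)^{-1-d}\,d\ell$-type and Beta-integral identities and the elementary inequalities $n+[nm_1]+1>n(1+m_1)$ etc. The other delicate point is ensuring the $S_{1,k}$ contribution really is absorbed into $S_{2,k}$, which follows from the summation-by-parts structure but should be stated explicitly.
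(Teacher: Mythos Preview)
Your overall plan---pass to Riemann integrals, invoke Propositions \ref{prop:btilineq111}--\ref{prop:btilineq222}, and bound inner products via $\|T\|=\pi$---is the paper's, and the $n$- and $s$-dependence you extract is correct. But you have not identified the mechanism that yields the constant in \eqref{eq:sumT}, and your claim that the Cauchy--Schwarz route ``produces the constant $(1-4d^2)^{-1/2}$'' is not right.

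The paper integrates the $t$-variable in the $S_{2,k}$ bound \emph{before} applying Cauchy--Schwarz. After rescaling, one meets
\[
\int_u^1\frac{dt}{(1-t+u+m_1)^2}=\frac{1}{u+m_1}-\frac{1}{1+m_1}.
\]
The piece $1/(u+m_1)$ is precisely the kernel of $T$; paired with $\psi(u):=u^{-d}1_{(0,1)}(u)$ and the iterated kernel $D_k$ it contributes $(\psi,T^{k}f_{n,s})_L$ with $f_{n,s}(\ell):=\{\ell+(s+1)/n\}^{-1-d}$. The piece $-1/(1+m_1)=:-\zeta(m_1)$, after $\int_0^1 u^{-d}\,du=1/(1-d)$, gives $-\tfrac{1}{1-d}(\zeta,T^{k-1}f_{n,s})_L$, which \emph{cancels exactly} the bound $\tfrac{1}{1-d}(\zeta,T^{k-1}f_{n,s})_L$ obtained for $\sum_t S_{1,k}$. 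What survives is the single term
\[
(\psi,T^{k}f_{n,s})_L\le \pi^{k}\|\psi\|_L\|f_{n,s}\|_L
=\frac{\pi^{k}}{(1-4d^2)^{1/2}}\Bigl(\frac{n}{s+1}\Bigr)^{(1/2)+d},
\]
and the extra $\pi$ together with $(r\sin(\pi d)/\pi)^{k-1}$ gives exactly \eqref{eq:sumT}. So $S_{1,k}$ is not a ``lower-order boundary piece'' absorbed by $S_{2,k}$ via further summation by parts; the two are of the same order and interact through this explicit cancellation.

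By contrast, your route---apply Cauchy--Schwarz with $f_2(m_1)=(1-\tau+\upsilon+m_1)^{-2}$ and only then integrate in $\tau,\upsilon$---gives $\|f_2\|_L=3^{-1/2}(1-\tau+\upsilon)^{-3/2}$ and
\[
\int_0^1\upsilon^{-d}\,d\upsilon\int_\upsilon^1(1-\tau+\upsilon)^{-3/2}\,d\tau
=2\int_0^1(\upsilon^{-1/2-d}-\upsilon^{-d})\,d\upsilon
=\frac{4}{1-2d}-\frac{2}{1-d},
\]
so your constant behaves like $(1-2d)^{-1}$ rather than $(1-2d)^{-1/2}$; it does not recover \eqref{eq:sumT} as stated. (It would yield a weaker inequality of the same shape, which would still suffice for Theorem~\ref{thm:Bound1} with a different $M_1$, but not the lemma itself.)
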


\begin{proof}
In this proof, 
we use an argument similar to that in the proof of Theorem 6.4 of \cite{I00}; 
see also the proof of Proposition 5.3 of \cite{I08}. 
We prove (\ref{eq:sumT}) only for $k\ge 2$; the case $k=1$ can be treated in 
the same way.

Let $n\in \{N_1, N_1+1, \dots\}$ and $s \in \{0,\dots, n-1\}$. 
In the same way as the proof of Proposition \ref{prop:btilineq111}, 
$\sum_{t=0}^{n-1} S_{1,k}(n,s,t)$ is equal to
\[
\begin{aligned}
\sum_{t=0}^{n-1} \sum_{\ell = 0}^{\infty} 
\frac{\Vert \tilde{b}^k_{n,1,\ell} \Vert}{(s + \ell +2)^{1+d}(t+1)^d} 
&= 
\int_0^{n} dt \int_0^{\infty} d\ell 
\frac{\Vert \tilde{b}^k_{n,1,[\ell]} \Vert}{([\ell] + s+ 2)^{1+d}([t] + 1)^d} \\
&=
n^2 \int_0^{1} dt \int_0^{\infty} d\ell 
\frac{\Vert \tilde{b}^k_{n,1,[n \ell]} \Vert}{( [n \ell]+ s+ 2)^{1+d}([n t] + 1)^d} .
\end{aligned}
\]
We define $f_{n,s}\in L$ by 
$f_{n,s}(x) := \{x+(s+1)/n\}^{-1-d}$. 
Its norm is given by
\begin{equation}
\|f_{n,s}\|_{L}^2 = \int_{0}^{\infty} \frac{d\ell}{\{\ell + (s+1)/n\}^{2+2d}}
= \frac{1}{1+2d} \left(\frac{n}{s+1}\right)^{1+2d}.
\label{eq:normf123}
\end{equation}
We also define $\zeta\in L$ by 
$\zeta(x) := (1+x)^{-1}$. 
Then, since $( [n \ell]+ s+ 2)^{-(1+d)} \leq n^{-(1+d)} f_{n,s}(\ell)$ and 
$([n t] + 1)^{-d} \leq (nt)^{-d}$, 
we have
\[
\begin{aligned}
&\sum_{t=0}^{n-1} S_{1,k}(n,s,t)
\le 
n^{-2d} \int_0^{1} \frac{dt}{t^d} \int_0^{\infty} f_{n,s}(\ell)  (n \Vert \tilde{b}^k_{n,1,[n \ell]} \Vert)  d\ell \\
&\le
n^{-2d} \frac{K_3}{1-d} \left( \frac{r\sin(\pi d)}{\pi} \right)^{k-1}
\int_0^{\infty} \frac{1}{1+m_1} \left\{ \int_0^{\infty} D_k(m_1,\ell) f_{n,s}(\ell) d\ell \right\} dm_1 \\
&= n^{-2d} \frac{K_3}{1-d} \left( \frac{r\sin(\pi d)}{\pi} \right)^{k-1} 
(\zeta_{}, T^{k-1} f_{n,s})_L,
\end{aligned}
\]
where the last inequality is by Proposition \ref{prop:btilineq111} 
and the last equality is by (\ref{eq:Dintegral111}).

For the sum of $S_{2,k}(n,s,t)$, we have
\[
\begin{aligned}
&\sum_{t=0}^{n-1} S_{2,k}(n,s,t) 
=
\int_0^{n} dt \int_0^{[t]} du \int_0^{\infty} d\ell 
\frac{\Vert \tilde{b}^k_{n,[t]-[u],[\ell]} - \tilde{b}^k_{n,[t] + 1 - [u],[\ell]} \Vert}{( [\ell] +s+2)^{1+d}([u] + 1)^d} 
\\
&=
n^3 \int_0^{1} dt \int_0^{[nt]/n} du \int_0^{\infty} d\ell 
\frac{\Vert \tilde{b}^k_{n,[n t]-[n u],[n \ell]} - \tilde{b}^k_{n,[n t] + 1 - [n u],[n \ell]} \Vert}{([n \ell]+ s+2)^{1+d}([n u] + 1)^d}\\
&\le 
n^{-2d}  \int_0^{1} dt \int_0^{t} \frac{du}{u^d} \int_0^{\infty} 
f_{n,s}(\ell) (n^2 \Vert \tilde{b}^k_{n,[n t]-[n u],[n \ell]} - \tilde{b}^k_{n,[n t] + 1 - [n u],[n \ell]} \Vert )  d\ell
\\
&=
n^{-2d} \int_0^{1}  \frac{du}{u^d}  \int_u^{1} dt \int_0^{\infty} f_{n,s}(\ell) (n^2 \Vert \tilde{b}^k_{n,[nt]-[n u],[n \ell]} - \tilde{b}^k_{n,[n t] + 1 - [n u],[n \ell]} \Vert )  d\ell.
\end{aligned}
\]
Then, using Proposition \ref{prop:btilineq222} and (\ref{eq:Dintegral111}), 
we obtain
\[
\begin{aligned}
& (K_3)^{-1} \left( \frac{r\sin(\pi d)}{\pi} \right)^{-(k-1)} \sum_{t=0}^{n-1} S_{2,k}(n,s,t) \\
& \le
n^{-2d} 
\int_0^1 \frac{du}{u^d} \int_0^{\infty} 
\left\{ 
\int_u^1 \frac{dt }{(1 - t +  u + m_1)^2} 
\right\}
\left\{ \int_0^{\infty} D_k(m_1,\ell) f_{n,s}(\ell) d\ell \right\} dm_1 \\
&= 
n^{-2d} 
\int_0^1 \frac{du}{u^d} \int_0^{\infty} 
\left\{ 
\frac{1}{u + m_1} - \frac{1}{1 + m_1}
\right\} T^{k-1}f_{n,s} (m_1) dm_1 \\
&= n^{-2d} 
\left\{
(\psi, T^{k}f_{n,s})_L
- \frac{1}{1-d}(\zeta_{}, T^{k-1} f_{n,s})_L,
\right\},
\end{aligned}
\label{eq:S2ktt}
\]
where $\psi\in L$ is defined by $\psi(x) := x^{-d}1_{(0,1)}(x)$. 
Notice that $\Vert \psi \Vert_L = (1-2d)^{-1/2}$.

Combining, 
\[
\sum_{t=0}^{n-1} \{ S_{1,k}(n,s,t) + S_{2,k}(n,s,t) \}
\le n^{-2d} K_3 
\left (\frac{r\sin(\pi d)}{\pi} \right)^{k-1}
(\psi, T^{k}f_{n,s})_L.
\]
Since (\ref{eq:opnormT956}) and (\ref{eq:normf123}) imply
\[
(\psi, T^{k}f_{n,s})_L \leq \|\psi\|_L \|T\|^k \|f_{n,s}\|_L  
= \frac{\pi^k}{(1-4d^2)^{1/2}} \left(\frac{n}{s+1}\right)^{(1/2)+d},
\]
(\ref{eq:sumT}) follows.
\end{proof}

Now we are ready to prove Theorem \ref{thm:Bound1}.

\begin{proof}[Proof of Theorem \ref{thm:Bound1}]
Let $K_1$ and $K_2$ be as in Proposition \ref{prop:aA-est123} and 
$N_1\in \N$ be as in Lemma \ref{lemma:Sestim321a}.
Take $r\in (1,\infty)$ such that $r \sin(\pi d)<1$
Then, we define $M\in (0,\infty)$ by
\begin{equation*}
M := \frac{\pi K_1 K_2 K_3}{(1-4d^2)^{1/2} \{1-r\sin(\pi d)\}}.
\end{equation*}
Recall $R_{s,n}$ from (\ref{eq:RCn-b}). 
By Theorem \ref{thm:Tnbound}, $R_{s,n} \le A_{s,n} + B_{s,n}$ 
holds for $n\in \N$ and $s \in \{1, \dots,n\}$, 
where
\begin{align*}
A_{s,n} &:= K_1 K_2 \sum_{k=1}^{\infty} \sum_{t=1}^{n} \{ S_{1,2k-1}(n,n-s,t-1) + S_{2,2k-1}(n,n-s,t-1)\},\\
B_{s,n} &:= K_1 K_2 \sum_{k=1}^{\infty} \sum_{t=1}^{n} \{ S_{1,2k}(n,s-1,t-1) + S_{2,2k}(n,s-1,t-1)\}.
\end{align*}
By Lemma \ref{lemma:Sestim321a}, we have, 
for $n \in \{N_1, N_1+1, \dots\}$ and $s \in \{1,\dots,n\}$,
\[
\begin{aligned}
A_{s,n} 
&= 
K_1 K_2 \sum_{k=1}^{\infty} \sum_{t=0}^{n-1} \{S_{1,2k-1}(n,n-s,t) + S_{2,2k-1}(n,n-s,t)\} \\
&\leq
\frac{\pi K_1 K_2 K_3 }{(1-4d^2)^{1/2}}
\left[  \sum_{k=1}^{\infty}
 \{ r\sin(\pi d)\}^{k-1}  \right] \frac{n^{(1/2)-d}}{(n+1-s)^{(1/2)+d}} 
=
M  \frac{n^{(1/2)-d}}{(n+1-s)^{(1/2)+d}},
\end{aligned}
\]
and, similarly,
\[
B_{s,n} 
= K_1 K_2 \sum_{k=1}^{\infty} \sum_{t=0}^{n-1} \{S_{1,k}(n,s-1,t) + S_{2,k}(n,s-1,t)\}
\leq 
M  n^{(1/2)-d} s^{-(1/2)-d}.
\]
Combining, 
$R_{s,n} \leq
M  n^{(1/2)-d} \{ (n+1-s)^{-(1/2)-d} + s^{-(1/2)-d} \}$ 
for $n \in \{N_1, N_1+1, \dots\}$ and $s \in \{1, \dots,n\}$. 
Since $\{(n,s): 1\leq s \leq n \leq N_1\}$ is a finite set, 
(\ref{eq:Tnwtt}) holds for some $M_1 \in (0,\infty)$.
\end{proof}

%%%%%%%%%%%%%%%%%%%%%%%%%%%%%%%%%%%%%%%%%%
%%%%%%%%      Section 7 
%%%%%%%%%%%%%%%%%%%%%%%%%%%%%%%%%%%%%%%%%%%%%%

\section{Proof of Theorem \ref{thm:Bound3}}\label{sec:7}

For $d\in (0,1/2)$, we put
\begin{equation}
\phi(x) := \int_{0}^{1} \frac{1}{(x +s)^{1+d}} ds, \quad x>0.
\label{eq:phi111}
\end{equation}
Recall the Hilbert space $L$ from (\ref{eq:L2def234}). 

\begin{proposition}\label{prop:phiL222}
For $d\in (0,1/2)$, we have $\phi \in L$.
\end{proposition}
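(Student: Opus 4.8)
The plan is to split the integral $\int_0^\infty \phi(x)^2\,dx$ at $x=1$ and to estimate $\phi$ by two different elementary bounds on $(0,1)$ and on $(1,\infty)$, each obtained from the monotonicity of $t\mapsto t^{-1-d}$.

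First I would put $\phi$ in closed form. Substituting $t=x+s$ gives $\phi(x)=\int_x^{x+1}t^{-1-d}\,dt=\tfrac1d\bigl(x^{-d}-(x+1)^{-d}\bigr)$, and in particular $\phi(x)\le\tfrac1d x^{-d}$ for every $x>0$. This is the bound I would use on $(0,1)$, where $\int_0^1\phi(x)^2\,dx\le\tfrac1{d^2}\int_0^1 x^{-2d}\,dx=\tfrac1{d^2(1-2d)}<\infty$; this is exactly the place where the hypothesis $d<1/2$ enters, since it is what makes $x^{-2d}$ integrable near $0$.

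Second, on $(1,\infty)$ the bound $\tfrac1d x^{-d}$ is too weak (its square fails to be integrable at infinity because $2d<1$), so instead I would use the cruder but faster-decaying estimate coming directly from $x+s\ge x$: namely $(x+s)^{-1-d}\le x^{-1-d}$ for $s\in(0,1)$, hence $\phi(x)\le x^{-1-d}$. Then $\int_1^\infty\phi(x)^2\,dx\le\int_1^\infty x^{-2-2d}\,dx=\tfrac1{1+2d}<\infty$. Adding the two contributions yields $\phi\in L$.

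There is essentially no obstacle here; the only point requiring care is selecting the right bound in each region. The behaviour of $\phi$ near $0$ is $\sim\tfrac1d x^{-d}$, square-integrable precisely because $d<1/2$, while near $\infty$ it decays like $x^{-1-d}$, which is comfortably square-integrable for any $d>0$.
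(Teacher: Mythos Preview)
Your proof is correct and follows essentially the same approach as the paper: the paper also uses the closed form $\phi(x)=\tfrac1d\{x^{-d}-(x+1)^{-d}\}$ to control the behaviour near $0$ and the pointwise bound $\phi(x)\le x^{-1-d}$ (from $x+s\ge x$) to control the behaviour at infinity. Your write-up is just more explicit about the split at $x=1$ and the resulting integrals.
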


\begin{proof}
Since 
$\phi(x) \le \int_0^1 x^{-1-d} ds 
= x^{-(1+d)}$ and 
$\phi(x) 
= (1/d) \{  x^{-d} - (x + 1)^{-d} \}$ for $x>0$, 
the assertion follows.
\end{proof}

Recall $S_{1,k}(n,s,t)$ and $S_{2,k}(n,s,t)$ from (\ref{eq:S1k}) and (\ref{eq:S2k}), 
respectively. 

\begin{lemma}\label{lemma:Sestim321b}
We assume (F${}_d$) for $d\in (0,1/2)$. 
Take $r\in (1,\infty)$ and let $K_3\in (0,\infty)$ and $N_1\in \N$ be as in 
Proposition \ref{prop:beta123}. Then, for $k\in\N$, $\ n\in \{N_1, N_1+1,\dots\}$ and 
$t \in \{0, \dots, n-1\}$, we have
\begin{equation}
\sum_{s=0}^{n-1} \{ S_{1,k}(n,s,t) + S_{2,k}(n,s,t) \} \\
\leq 
\frac{2 K_3 \|\phi\|_L \{ r\sin(\pi d)\}^{k-1} n^{1-d}}{(1-2d)^{1/2} (t+1)^{d} (n-t)}.
\label{eq:Sest768}
\end{equation}
\end{lemma}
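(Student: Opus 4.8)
The plan is to mirror the argument of Lemma \ref{lemma:Sestim321a}, but now summing over the row index $s$ (equivalently, over the ``$\ell$ plus offset'' argument of $f_{n,s}$) rather than over $t$. I would prove the estimate only for $k\ge 2$; the case $k=1$ is handled the same way using (\ref{eq:btilineq111}) and (\ref{eq:btilineq333}) in place of (\ref{eq:btilineq222}) and (\ref{eq:btilineq444}). First I would fix $n\in\{N_1,N_1+1,\dots\}$ and $t\in\{0,\dots,n-1\}$, and convert the two sums $\sum_{s=0}^{n-1}S_{1,k}(n,s,t)$ and $\sum_{s=0}^{n-1}S_{2,k}(n,s,t)$ into integrals by the floor-function device already used in Propositions \ref{prop:btilineq111} and \ref{prop:btilineq222}: writing $s=[nx]$ with $x\in(0,1)$ and $\ell=[ny]$, the inner sum $\sum_{s=0}^{n-1}(s+\ell+2)^{-1-d}$ becomes (up to the usual floor estimates) $n^{-d}$ times $\int_0^1 (x+y+(2/n))^{-1-d}\,dx$, which is bounded by $\phi(y)$ with $\phi$ from (\ref{eq:phi111}). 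This is the structural difference from Lemma \ref{lemma:Sestim321a}: there the summation over $t$ produced the factor $\|\psi\|_L$ and the function $f_{n,s}$ carried the decaying weight; here the summation over $s$ produces the bounded function $\phi\in L$ (Proposition \ref{prop:phiL222}), while the $t$-dependence survives only through the explicit prefactors $(t+1)^{-d}$ and, from the $\ell$-sum in $S_{2,k}$ after a change of order of integration, a factor $(n-t)^{-1}$.

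Concretely, after the substitution I expect the sum of $S_{1,k}$ to be dominated by
\[
n^{1-d}(t+1)^{-d}\,K_3\Bigl(\tfrac{r\sin(\pi d)}{\pi}\Bigr)^{k-1}\bigl(\phi,T^{k-1}\mathbf 1_{(0,\infty)}\cdot(\text{something integrable})\bigr),
\]
but more cleanly one routes it through the operator $T$ from (\ref{eq:Tfx}): using Proposition \ref{prop:btilineq111} to bound $n\|\tilde b^k_{n,1,[n\ell]}\|$ by $K_3(r\sin(\pi d)/\pi)^{k-1}(1+m_1)^{-1}D_k(m_1,\ell)$ and then (\ref{eq:Dintegral111}) to recognize $\int_0^\infty D_k(m_1,\ell)\phi(\ell)\,d\ell = T^{k-1}\phi(m_1)$, the $S_{1,k}$ contribution becomes a constant times $n^{1-d}(t+1)^{-d}(\zeta,T^{k-1}\phi)_L$ with $\zeta(x)=(1+x)^{-1}$ as before. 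For $S_{2,k}$ I would first swap the $t'$(=summation variable inside) and $u$ integrals exactly as in Lemma \ref{lemma:Sestim321a}; the new point is that the inner integral over the telescoped index now runs over a range of length comparable to $n-t$ rather than all of $(0,1)$, which is precisely where the extra $(n-t)^{-1}$ is born, and then Proposition \ref{prop:btilineq222} together with (\ref{eq:Dintegral111}) again reduces everything to inner products of $\phi$ against powers of $T$. Collecting the two pieces, bounding $\|T\|=\pi$ by (\ref{eq:opnormT956}), and using $\|\phi\|_L<\infty$, one arrives at a bound of the claimed shape; the numerical constant $2/(1-2d)^{1/2}$ is then extracted from the explicit norms (the $(1-2d)^{-1/2}$ coming from an $\psi$-type factor $x^{-d}\mathbf 1_{(0,1)}$ that reappears in the $u$-integral, and the $2$ absorbing the combination of the $S_{1,k}$ and $S_{2,k}$ terms).

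The main obstacle, as in the companion lemma, is doing the floor-to-integral passage and the order-of-integration swap carefully enough that (a) all the elementary inequalities $n+[nx]+1>n(1+x)$, $[nx]+[ny]+2>n(x+y)$, etc., go the right way so that the powers of $n$ come out exactly as $n^{1-d}$, and (b) the emergence of the factor $(n-t)^{-1}$ is rigorously justified — one must check that the double integral $\int_0^1 du\, u^{-d}\int\cdots$ with the telescoping difference bounded via (\ref{eq:btilineq444}) really produces $(n-t)^{-1}$ and not merely $O(1)$. I would handle (b) by keeping the argument $1-t'+u'+m_1$ of the $D_k$-integrand in the form $(n-[nt]+[nu]+m_1+2)/n$ and noting that for $t$ fixed and $u$ ranging over $(0,t)$ the quantity $n-[nt]+[nu]$ stays bounded below by a constant multiple of $n-t$, so that after the $u$-integration one genuinely gains the reciprocal of $n-t$. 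Everything else is bookkeeping of the same flavor as the already-completed Lemma \ref{lemma:Sestim321a}, so I expect no essentially new analytic input beyond Propositions \ref{prop:phiL222}, \ref{prop:btilineq111} and \ref{prop:btilineq222}.
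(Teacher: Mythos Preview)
Your overall architecture is correct and matches the paper: the sum over $s$ collapses to the function $\phi$ via the integral substitution, Propositions \ref{prop:btilineq111} and \ref{prop:btilineq223} reduce the $\tilde b^k$-terms to iterated kernels, and (\ref{eq:Dintegral111}) lets you route everything through powers of $T$ acting on $\phi$. (The paper in fact uses Proposition \ref{prop:btilineq223} rather than \ref{prop:btilineq222}, keeping $t$ an integer and rescaling only $s$ and $\ell$; this is cosmetic.)

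The gap is in your account of how $(n-t)^{-1}$ emerges from $S_{2,k}$. There is no ``inner integral over the telescoped index'' running over a range of length $n-t$: here $t$ is \emph{fixed}, so the swap-and-telescope from Lemma \ref{lemma:Sestim321a} is simply unavailable, and the $u$-integral runs over $(0,t)$, not over anything of length $n-t$. Your fallback mechanism---bounding $n-t+u\ge n-t$ from below uniformly in $u$---is too crude: using $\{1-(t/n)+(u/n)+m_1\}^{-2}\le\{(n-t)/n+m_1\}^{-2}$ and then pairing with $T^{k-1}\phi$ produces $\|h\|_L\sim\{n/(n-t)\}^{3/2}$ times $\int_0^t u^{-d}\,du\sim t^{1-d}$, which overshoots the target by a factor $(n/(n-t))^{1/2}$ and blows up at $t=n-1$.

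What the paper actually does is pair with $T^{k-1}\phi$ in $m_1$ first, getting $(g_{t,u},T^{k-1}\phi)_L\le \pi^{k-1}\|\phi\|_L\|g_{t,u}\|_L$ with $\|g_{t,u}\|_L\sim\{1-(t/n)+(u/n)\}^{-3/2}$, and then apply Cauchy--Schwarz to the remaining $u$-integral:
\[
\int_0^t \frac{du}{u^{d}\{1-(t/n)+(u/n)\}^{3/2}}
\;\le\;
\Bigl(\int_0^t u^{-2d}\,du\Bigr)^{1/2}
\Bigl(\int_0^t \{1-(t/n)+(u/n)\}^{-3}\,du\Bigr)^{1/2}.
\]
The second factor evaluates exactly to $\bigl[t(2-t/n)/\{2(1-t/n)^2\}\bigr]^{1/2}$, and \emph{this} is the source of the single clean power $(n-t)^{-1}$; the first factor supplies the $(1-2d)^{-1/2}$. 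Note also that the $S_{1,k}$ contribution is only $O(n^{-d}(t+1)^{-d})$, not $O(n^{1-d}(t+1)^{-d})$ as you wrote; the prefactor $2n^{1-d}/(n-t)$ appears only after combining $S_{1,k}$ with $S_{2,k}$ via $1+(1+t)/(n-t)=(n+1)/(n-t)\le 2n/(n-t)$.
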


\begin{proof}
The proof is similar to that of Lemma \ref{lemma:Sestim321a}. 
We prove (\ref{eq:Sest768}) only for $k\ge 2$; the case $k=1$ can be treated in 
the same way.

Let $n \ge N_1$ and $t \in \{0,\dots, n-1\}$. 
Using Proposition \ref{prop:btilineq111} and (\ref{eq:Dintegral111}), 
we have
\[
\begin{aligned}
&(t+1)^{d} \sum_{s=0}^{n-1} S_{1,k}(n,s,t)
=  
\int_0^{n} ds \int_0^{\infty} d\ell 
\frac{\Vert \tilde{b}^k_{n,1,[\ell]} \Vert }{([s] + [\ell] + 2)^{1+d}} \\
&\quad=
n^2 \int_0^{1} ds \int_0^{\infty} d\ell 
\frac{\Vert \tilde{b}^k_{n,1,[n \ell]} \Vert }{( [n s] +[n \ell] + 2)^{1+d}} 
\le 
n^{-d} \int_0^{\infty} \phi(\ell) (n \Vert \tilde{b}^k_{n,1,[n \ell]} \Vert)  d\ell \\
&\quad\le
n^{-d} K_3  \left( \frac{r\sin(\pi d)}{\pi} \right)^{k-1}
\int_0^{\infty} \frac{1}{1+m_1} \left\{ \int_0^{\infty} D_k(m_1,\ell) \phi(\ell) d\ell \right\} dm_1 \\
&\quad= n^{-d} K_3 \left( \frac{r\sin(\pi d)}{\pi} \right)^{k-1} 
(\zeta_{}, T^{k-1} \phi)_L,
\end{aligned}
\]
where $\zeta$ is as in the proof of Lemma \ref{lemma:Sestim321a}. 
Using (\ref{eq:opnormT956}) and 
$\Vert \zeta\Vert_L=1$. 
Therefore, we obtain
\begin{equation}
\begin{aligned}
\sum_{s=0}^{n-1} S_{1,k}(n,s,t) 
&\leq n^{-d} (t+1)^{-d} K_3 \left( \frac{r\sin(\pi d)}{\pi} \right)^{k-1} \|\zeta_{}\|_L \|T\|^{k-1} \|\phi_{}\|_L \\
&= n^{-d} (t+1)^{-d} \|\phi_{}\|_L K_3 \{r\sin(\pi d)\}^{k-1}.
\end{aligned}
\label{eq:S1kss}
\end{equation}

We turn to the sum of $S_{2,k}(n,s,t)$. Since $S_{2,k}(n,s,0)=0$, 
let $n\in \{N_1, N_1+1, \dots\}$ and $t \in \{1,\dots, n-1\}$. 
Then, 
\[
\begin{aligned}
& \sum_{s=0}^{n-1} S_{2,k}(n,s,t) 
= \sum_{s=0}^{n-1} \sum_{u=0}^{t-1} \sum_{\ell = 0}^{\infty} 
\frac{1}{(s + \ell +2)^{1+d}(u+1)^d} 
\Vert \tilde{b}^k_{n,t-u,\ell} - \tilde{b}^k_{n,t + 1 - u,\ell} \Vert\\
&=
\int_0^{n} ds \int_0^{t} du \int_0^{\infty} d\ell 
\frac{1}{( [s] + [\ell]+2)^{1+d}([u] + 1)^d} 
\Vert \tilde{b}^k_{n,t-[u],[\ell]} - \tilde{b}^k_{n,t + 1 - [u],[\ell]} \Vert\\
&\leq
n^2 \int_0^{1} ds \int_0^{t} du \int_0^{\infty} d\ell 
\frac{1}{( [ns] + [n \ell]+ 2)^{1+d} u^d}
\Vert \tilde{b}^k_{n,t-[u],[n \ell]} - \tilde{b}^k_{n,t + 1 - [u],[n \ell]} \Vert\\
&\le 
n^{-(1+d)}  \int_0^{t} \frac{du}{u^d} \int_0^{\infty} 
\phi_{}(\ell) (n^2 \Vert \tilde{b}^k_{n,t-[u],[n \ell]} - \tilde{b}^k_{n,t + 1 - [u],[n \ell]} \Vert)  d\ell,
\end{aligned}
\]
where the inequality 
$( [ns] + [n \ell]+ 2)^{-(1+d)} \leq n^{-(1+d)} (s+\ell)^{-(1+d)}$ 
is used in the last inequality. 
Therefore, using Proposition \ref{prop:btilineq222} and (\ref{eq:Dintegral111}), 
we obtain
\[
\begin{aligned}
& (K_3)^{-1}n^{1+d} \left( \frac{r\sin(\pi d)}{\pi} \right)^{-(k-1)} \sum_{s=0}^{n-1} S_{2,k}(n,s,t) \\
&\le 
\int_0^{t} \frac{du}{u^d} \int_0^{\infty} d\ell 
\phi_{}(\ell)\int_0^{\infty} \frac{D_k(m_1,\ell)}{\{1 - (t/n) + (u/n) + m_1\}^2}  dm_1 \\
&=
\int_0^{t} \frac{du}{u^d} 
\int_0^{\infty}
\frac{T^{k-1} \phi_{} (m_1)}{\{1 - (t/n) + (u/n) + m_1\}^2}  dm_1 
=
\int_0^{t} \frac{1}{u^d} ( g_{t,u}, T^{k-1}\phi_{} )_L du,
\end{aligned}
\]
where 
$g_{t,u}(m_1) := \{1 - (t/n) + (u/n) + m_1\}^{-2}$. 
Using
\begin{equation*}
\|g_{t,u}\|_L^2 = \int_0^{\infty} \frac{1}{\{1 - (t/n) + (u/n) + m_1\}^4} dm_1
= \frac{1}{3\{1-(t/n)+(u/n) \}^3}
\end{equation*}
as well as (\ref{eq:opnormT956}) and $3^{-1}<1$, we have
\[
\begin{aligned}
&\sum_{s=0}^{n-1} S_{2,k}(n,s,t) 
\leq
n^{-(1+d)} K_3  \left( \frac{r\sin(\pi d)}{\pi} \right)^{k-1} \|T\|^{k-1} \|\phi_{}\|_L
\int_0^{t} \frac{1}{u^d} \|g_{t,u}\|_L du\\
&\leq
n^{-(1+d)} K_3 \|\phi_{}\|_L \{ r\sin(\pi d)\}^{k-1}
\int_0^{t} \frac{1}{u^d} \frac{1}{\{1-(t/n)+(u/n) \}^{3/2}} du.
\end{aligned}
\]
The Cauchy--Schwartz inequality yields
\[
\begin{aligned}
&\int_0^{t} \frac{1}{u^d} \frac{1}{\{1-(t/n)+(u/n) \}^{3/2}} du 
\leq
\left(  \int_0^{t} \frac{du}{u^{2d}} \right)^{1/2} 
\left[ \int_0^{t} \frac{du}{\{1-(t/n)+(u/n) \}^3} \right]^{1/2} \\
&= (1-2d) ^{-1/2} t^{(1/2) - d} \left[ \frac{t\{2-(t/n)\}}{2\{1-(t/n)\}^2} \right]^{1/2}
\leq  \frac{(1+t)^{1-d}}{(1-2d)^{1/2} \{1-(t/n)\}},
\end{aligned}
\]
where $t<t+1$ and $0<2-(t/n)<2$ are used in the last inequality. 
Therefore,
\begin{equation}
\sum_{s=0}^{n-1} S_{2,k}(n,s,t) 
\leq \frac{(t+1)^{1-d} K_3 \|\phi\|_L \{ r\sin(\pi d)\}^{k-1}}{n^d (n-t) (1-2d)^{1/2}}.
\label{eq:S2kss3}
\end{equation}

Combining (\ref{eq:S1kss}) and (\ref{eq:S2kss3}) and using $(1-2d)^{1/2} < 1$ and 
$n+1 \le 2n$, we obtain
\[
\begin{aligned}
&\sum_{s=0}^{n-1} \{S_{1,k}(n,s,t) + S_{2,k}(n,s,t) \} 
\leq \frac{K_3 \|\phi\|_L}{(1-2d)^{1/2}} \{ r\sin(\pi d)\}^{k-1} 
n^{-d}(t+1)^{-d} \left\{ 1 + (1 + t) (n-t)^{-1}\right\} \\
&= \frac{K_3 \|\phi\|_L \{ r\sin(\pi d)\}^{k-1}}{(1-2d)^{1/2} (t+1)^{d}}  n^{-d}  
\left( \frac{n+1}{n-t} \right) 
\le \frac{2 K_3 \|\phi\|_L \{ r\sin(\pi d)\}^{k-1} n^{1-d}}{(1-2d)^{1/2} (t+1)^{d}
 (n-t)}
\end{aligned}
\]
for $n\in \{N_1, N_1+1, \dots\}$ and $t \in \{0, \dots,n-1\}$. 
Thus, (\ref{eq:Sest768}) follows.
\end{proof}

Now we are ready to prove Theorem \ref{thm:Bound3}.

\begin{proof}[Proof of Theorem \ref{thm:Bound3}]
Let $K_1$ and $K_2$ be as in Proposition \ref{prop:aA-est123} 
and $K_3$ as in Proposition \ref{prop:beta123}. 
Take $r\in (1,\infty)$ such that $r \sin(\pi d)<1$ and choose $N_1\in \N$ so that 
(\ref{eq:betaestim327}) holds. We define $M\in (0,\infty)$ by
\begin{equation*}
M := \frac{2K_1 K_2 K_3 \Vert \phi\Vert_L }{(1-2d)^{1/2} \{1-r\sin(\pi d)\}}.
\end{equation*}
By Theorem \ref{thm:Tnbound} and Lemma \ref{lemma:Sestim321b}, 
we have, for $n\in \N$ and $t \in \{1, \dots,n\}$,
\[
\begin{aligned}
&C_{t,n}
\leq K_1 K_2 
\sum_{k=1}^{\infty} \sum_{s=1}^{n} \{ S_{1,2k-1}(n,n-s,t-1) + S_{2,2k-1}(n,n-s,t-1)\}\\
&\quad\quad+
K_1 K_2 
\sum_{k=1}^{\infty} \sum_{s=1}^{n} \{ S_{1,2k}(n,s-1,t-1) + S_{2,2k}(n,s-1,t-1)\}. 
\\
&=  K_1 K_2 \sum_{k=1}^{\infty} 
\sum_{s=0}^{n-1} \{ S_{1,k}(n,s,t-1) + S_{2,k}(n,s,t-1)\}\\
&\leq  \frac{2 K_1 K_2 K_3 \|\phi\|_L}{(1-2d)^{1/2}}
\left(  \sum_{k=1}^{\infty}
 \{ r\sin(\pi d)\}^{k-1}  \right)
\frac{n^{1-d}}{ t^{d} (n+1-t)} 
=  \frac{M n^{1-d}}{ t^{d} (n+1-t)}.
\end{aligned}
\]
Since $\{(n,t): 1\leq t \leq n \leq N_1\}$ is a finite set, 
(\ref{eq:Tnwss}) holds for some $M_3 \in (0,\infty$).
\end{proof}

%%%%%%%%%%%%%%%%%%%%%%%%%%%%%%%
%%%%%%%%  Section 8
%%%%%%%%%%%%%%%%%%%%%%%%%%%%%%

\section{Proofs of Theorems \ref{thm:Bound5} and \ref{thm:Bound6}} \label{sec:8}

We continue to assume that $w$ satisfies (F$_d$) with $d\in (0,1/2)$. 
Let $\tilde{w}$ be as in (\ref{eq:wtilde}). 
Recall the sequences $\{a_k\}, \{\tilde{a}_k\} \in \ell_1^{q\times q}$ from Section \ref{sec:4}. 
Then, we have
\begin{align}
(T_{\infty}(w)^{-1})^{s,t} 
&= \sum_{\ell=1}^{s\wedge t} \tilde{a}_{s-\ell}^* \tilde{a}_{t-\ell}, \quad 
s,t\in\N,  \label{eq:Tinftyinv973}  \\
(T_{\infty}(\tilde{w})^{-1})^{s,t} 
&= \sum_{\ell=1}^{s\wedge t} a_{s-\ell}^* a_{t-\ell}, \quad 
s,t\in\N,  \label{eq:Tinftyinvtilde}
\end{align}
where $s\wedge t := \min(s,t)$. 
See, e.g., \cite{I23}, Remark 2.1.

\begin{proposition}\label{prop:Tinfinv123}
We assume (F${}_d$) for $d\in (0,1/2)$. 
Then there exists $B_1 = B_1(w) \in (0,\infty)$, which depends only on $w$, 
such that
\begin{equation}
\sum_{s=1}^{\infty} \Vert (T_{\infty}(w)^{-1})^{s,t}\|
\le B_1,\quad t\in\N.
\label{eq:Tinfty1norm}
\end{equation}
\end{proposition}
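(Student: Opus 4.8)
The plan is to argue directly from the closed-form expression (\ref{eq:Tinftyinv973}) for the blocks of $T_{\infty}(w)^{-1}$ in terms of the backward autoregressive coefficients $\{\tilde{a}_k\}$, reducing the claim to a rearrangement of a double sum of nonnegative terms.

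First I would fix $t\in\N$, apply the triangle inequality and the submultiplicativity of the spectral norm to (\ref{eq:Tinftyinv973}), and obtain
\[
\Vert (T_{\infty}(w)^{-1})^{s,t}\Vert \le \sum_{\ell=1}^{s\wedge t} \Vert \tilde{a}_{s-\ell}\Vert\,\Vert \tilde{a}_{t-\ell}\Vert,\qquad s\in\N.
\]
Summing over $s\ge 1$ and interchanging the order of summation (legitimate by Tonelli, since all terms are nonnegative), the constraint $1\le\ell\le s\wedge t$ becomes $1\le\ell\le t$ together with $s\ge\ell$; after the substitution $j=s-\ell$ the inner sum over $s$ turns into $\sum_{j=0}^{\infty}\Vert \tilde{a}_j\Vert$, which no longer depends on $\ell$. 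Hence
\[
\sum_{s=1}^{\infty}\Vert (T_{\infty}(w)^{-1})^{s,t}\Vert
\le \Bigl(\sum_{j=0}^{\infty}\Vert \tilde{a}_j\Vert\Bigr)\sum_{\ell=1}^{t}\Vert \tilde{a}_{t-\ell}\Vert
\le \Bigl(\sum_{j=0}^{\infty}\Vert \tilde{a}_j\Vert\Bigr)^{2}.
\]

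To finish, I would invoke Proposition \ref{prop:aA-est123}(i), which yields $\Vert \tilde{a}_n\Vert\le K_1(n+1)^{-1-d}$ for $n\in\N$; since $d>0$ this is a summable bound, and together with the finiteness of $\Vert\tilde{a}_0\Vert$ it shows that $B:=\sum_{j=0}^{\infty}\Vert\tilde{a}_j\Vert<\infty$. Setting $B_1:=B^{2}$ — a quantity that depends only on $w$ (through $\tilde{h}$, hence through $g_{\sharp}$) — gives the asserted bound uniformly in $t$.

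There is no real obstacle here: the only point requiring mild care is the bookkeeping of the summation ranges in the interchange, and the observation that after the shift $j=s-\ell$ the inner sum is the \emph{full} $\ell_1$-norm of $\{\tilde{a}_j\}$ rather than a tail. (Equivalently, one could phrase this via the factorization $T_{\infty}(w)^{-1}=\overline{A}^{\,*}\overline{A}$ with $\overline{A}$ the block lower-triangular matrix built from the $\tilde{a}_k$'s, but the elementary double-sum estimate above is shorter.)
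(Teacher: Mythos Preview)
Your argument is correct and is essentially the same as the paper's: both start from the closed-form expression (\ref{eq:Tinftyinv973}), bound by $\sum_{\ell=1}^{s\wedge t}\Vert\tilde a_{s-\ell}\Vert\,\Vert\tilde a_{t-\ell}\Vert$, and rearrange the double sum to obtain $B_1=(\sum_{k\ge0}\Vert\tilde a_k\Vert)^2$. The only cosmetic difference is that the paper splits the sum over $s$ into the ranges $s\le t$ and $s>t$ before recombining, whereas you apply Tonelli directly; the resulting bound and constant are identical.
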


\begin{proof}
By (\ref{eq:Tinftyinv973}), $\sum_{s=1}^{\infty} \Vert (T_{\infty}(w)^{-1})^{s,t}\|$ 
is bounded by
\[
\begin{aligned}
&\sum_{s=1}^{\infty} \sum_{\ell=1}^{s\wedge t} 
\|\tilde{a}_{s-\ell}\| \|\tilde{a}_{t-\ell}\|
= \sum_{s=1}^{t} \sum_{\ell=1}^{s} 
\|\tilde{a}_{s-\ell}\| \|\tilde{a}_{t-\ell}\|
+
\sum_{s=t+1}^{\infty} \sum_{\ell=1}^{t} 
\|\tilde{a}_{s-\ell}\| \|\tilde{a}_{t-\ell}\|\\
&
= \sum_{\ell=1}^{t} \|\tilde{a}_{t-\ell}\| 
\sum_{s=\ell}^{t} \|\tilde{a}_{s-\ell}\| 
+
\sum_{\ell=1}^{t} \|\tilde{a}_{t-\ell}\|
\sum_{s=t+1}^{\infty} \|\tilde{a}_{s-\ell}\|
= \left(\sum_{\ell=0}^{t-1} \|\tilde{a}_{\ell}\| \right) 
\left(\sum_{s=0}^{\infty} \|\tilde{a}_s\| \right)
\end{aligned}
\]
for $t\in\N$, 
hence (\ref{eq:Tinfty1norm}) holds with $B_1 = ( \sum_{k=0}^{\infty} \|\tilde{a}_k\| )^2$.
\end{proof}

\begin{proposition}\label{prop:Tbound22}
We assume (F${}_d$) for $d\in (0,1/2)$. Let $0<\delta < r<1$. 
\begin{itemize}
\item[(i)] There exists $C_1 \in (0,\infty)$, which depends only on $w$, $\delta$ and $r$, such that
\begin{equation}  \label{eq:deltabound2}
\sum_{s=1}^{[n\delta]}\| (T_{\infty}(w)^{-1})^{s,t}\| \leq C_1 n^{-d}, \quad n \in \N, ~~t \in \{[r n]+1, \dots,n\}.
\end{equation}

\item[(ii)] There exists $C_2 \in (0,\infty)$, which depends only on $\tilde{w}$, $\delta$ and $r$, such that
\begin{equation}  \label{eq:deltabound222}
\sum_{s=1}^{[n\delta]}\| (T_{\infty}(\tilde{w})^{-1})^{n+1-s,n+1-t}\| \leq C_2 n^{-d}, \quad n \in \N, ~~t \in \{[r n]+1, \dots,n\}.
\end{equation}
\end{itemize}

\end{proposition}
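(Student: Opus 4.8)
The plan is to prove part~(i) directly from the closed-form expression \eqref{eq:Tinftyinv973} together with the decay estimate $\|\tilde a_k\|\le K_1(k+1)^{-1-d}$ from Proposition~\ref{prop:aA-est123}(i), and then deduce part~(ii) from part~(i) by a symmetry argument. For part~(i), fix $n\in\N$, $\delta<r<1$, and $t\in\{[rn]+1,\dots,n\}$, and recall that for such $t$ we have $t\ge [rn]+1> rn$, while $s\le [n\delta]\le n\delta<nr<t$, so $s\wedge t=s$ throughout the sum. Hence
\[
\sum_{s=1}^{[n\delta]}\bigl\|(T_\infty(w)^{-1})^{s,t}\bigr\|
\le \sum_{s=1}^{[n\delta]}\sum_{\ell=1}^{s}\|\tilde a_{s-\ell}\|\,\|\tilde a_{t-\ell}\|
= \sum_{s=1}^{[n\delta]}\sum_{j=0}^{s-1}\|\tilde a_j\|\,\|\tilde a_{t-s+j}\|,
\]
after the change of variable $j=s-\ell$. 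The key point is that in the second factor the index $t-s+j$ is at least $t-s\ge rn-n\delta=(r-\delta)n>0$, so $\|\tilde a_{t-s+j}\|\le K_1\bigl((r-\delta)n\bigr)^{-1-d}$ is uniformly small, of order $n^{-1-d}$. Bounding that factor by its maximum and summing the remaining $\sum_{j\ge 0}\|\tilde a_j\|<\infty$ (finite by Proposition~\ref{prop:aA-est123}(i)) over the $[n\delta]\le n\delta$ values of $s$, we obtain a bound of order $n\cdot n^{-1-d}=n^{-d}$, with constant $C_1$ depending only on $K_1$ (hence on $w$), $\delta$, and $r$. A mild sharpening — splitting the inner sum according to whether $t-s+j$ is $\le$ or $>$ some fixed fraction of $n$ — is available if one wants the cleanest constant, but is not needed for the stated $O(n^{-d})$ bound.

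For part~(ii), the observation is that $\tilde w$ again satisfies (F$_d$) for the same $d$ (as recalled in the text around \eqref{eq:wtilde}), with backward autoregressive coefficients equal to the forward coefficients $a_k$ of $w$, and \eqref{eq:Tinftyinvtilde} is exactly \eqref{eq:Tinftyinv973} written for $\tilde w$. Reindexing by $s'=n+1-s$ and $t'=n+1-t$, the condition $t\in\{[rn]+1,\dots,n\}$ becomes $t'\in\{1,\dots,n-[rn]\}$, i.e.\ $t'$ is small (of order at most $(1-r)n$), while $s'=n+1-s$ ranges over $\{n+1-[n\delta],\dots,n\}$, i.e.\ $s'$ is large. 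Thus $\sum_{s=1}^{[n\delta]}\|(T_\infty(\tilde w)^{-1})^{n+1-s,n+1-t}\|=\sum_{s'}\|(T_\infty(\tilde w)^{-1})^{s',t'}\|$ where the roles of "large index" and "small index" are swapped relative to part~(i); using the Hermitian symmetry $(T_\infty(\tilde w)^{-1})^{s',t'}=((T_\infty(\tilde w)^{-1})^{t',s'})^*$ (cf.\ \eqref{eq:Tinvsa333}) this equals a sum over the large variable of $\|(T_\infty(\tilde w)^{-1})^{t',s'}\|$, to which the same computation as in part~(i) applies verbatim with $a_k$ in place of $\tilde a_k$ and with $\delta,r$ replaced by $1-r,1-\delta$ (note $0<1-r<1-\delta<1$, so the hypothesis $0<\delta<r<1$ is exactly what is needed for the analogous inequalities). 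This yields \eqref{eq:deltabound222} with $C_2$ depending only on $\tilde w$, $\delta$, $r$.

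I do not anticipate a genuine obstacle here: both parts reduce to the elementary fact that the product $\|\tilde a_j\|\,\|\tilde a_{m}\|$ (resp.\ with $a$) has one factor that is summable and one factor whose index is forced to be of order $n$ and is therefore $O(n^{-1-d})$, so that summing over $O(n)$ values of the outer index $s$ gives $O(n^{-d})$. The only point requiring a little care is the bookkeeping of which indices are "large" and which are "small" after the reflection $s\mapsto n+1-s$, $t\mapsto n+1-t$ in part~(ii), and ensuring the hypothesis $0<\delta<r<1$ transforms correctly into $0<1-r<1-\delta<1$; this is what makes the two parts genuinely parallel rather than requiring a separate argument.
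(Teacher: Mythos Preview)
Your part (i) is correct and essentially identical to the paper's argument; the only cosmetic difference is that you bound the large-index factor pointwise by $K_1((r-\delta)n)^{-1-d}$ and then multiply by $[n\delta]$ copies of $\sum_j\|\tilde a_j\|$, whereas the paper interchanges the order of summation and bounds by the tail $\sum_{u\ge t-[n\delta]}\|\tilde a_u\|=O(n^{-d})$. Both yield the same $O(n^{-d})$.

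For part (ii) the paper takes a different, more direct route: it simply writes out $(T_\infty(\tilde w)^{-1})^{n+1-s,n+1-t}=\sum_{\ell=1}^{n+1-t}a_{n+1-s-\ell}^*a_{n+1-t-\ell}$ via \eqref{eq:Tinftyinvtilde}, observes that $n+1-s-\ell\ge t-[n\delta]\ge n(r-\delta)$ for all $\ell$ in range, and bounds exactly as in (i). Your symmetry-and-reindex approach also works, but the specific claim that after Hermitian symmetry one can apply part (i) ``verbatim with $\delta,r$ replaced by $1-r,1-\delta$'' is not quite right: after the reflection and the swap $(s',t')\mapsto(t',s')$, you are summing over the \emph{large} second index $s'$ with the \emph{small} first index $t'$ fixed, whereas in part (i) the outer sum was over the small first index. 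The inner sum now has fixed length $t'$ (not varying with $s'$), so the double sum is literally $\sum_{s'}\sum_{\ell=1}^{t'}\|a_{t'-\ell}\|\,\|a_{s'-\ell}\|$ with $s'-\ell\ge (r-\delta)n$; bounding as in your part (i) gives $[n\delta]\cdot(\sum_k\|a_k\|)\cdot K_1((r-\delta)n)^{-1-d}=O(n^{-d})$. So the argument goes through, but it is a parallel computation rather than a formal invocation of (i) with swapped parameters. The paper's direct computation avoids this bookkeeping entirely.
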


\begin{proof}
Let $s\in \{1, \dots, [n\delta]\}$ and $t \in \{[rn]+1, \dots, n\}$. Then, we have
\begin{equation}
t-s \ge t-[n\delta] \ge [rn]+1-[n\delta] \geq n(r-\delta) >0.
\label{eq:tsdifference}
\end{equation}

We first show (i). By (\ref{eq:Tinftyinv973}), (\ref{eq:tsdifference}) and Proposition \ref{prop:aA-est123}(i),
\[
\begin{aligned}
&\sum_{s=1}^{[n\delta]}\| (T_{\infty}(w)^{-1})^{s,t}\| 
\leq \sum_{s=1}^{[n\delta]} \sum_{\ell =1}^{s} \| \tilde{a}_{s-\ell}\| \| \tilde{a}_{t-\ell}\| 
= \sum_{\ell=1}^{[n\delta]}  \| \tilde{a}_{t-\ell}\| \sum_{s=\ell}^{[n\delta]} \| \tilde{a}_{s-\ell}\| \\
&\quad\quad\quad 
\leq \left\{  \sum_{k=0}^{\infty} \| \tilde{a}_{k}\| \right\} \sum_{u=t-[n\delta]}^{\infty}  \| \tilde{a}_{u}\|.\leq \tilde{C}_1 (t-[n\delta])^{-d}
\le \tilde{C}_1 (r-\delta)^{-d} n^{-d},
\end{aligned}
\]
where $\tilde{C}_1$ is a positive constant that depends only on $w$. 
Therefore, (\ref{eq:deltabound2}) holds with $C_1 = \tilde{C}_1 (r-\delta)^{-d}$.

Next, we show (ii). Since $n+1-s > n+1-t$ and
\[
n+1-s-\ell \ge n+1-[n\delta]- (n+1-t) = t- [n\delta] \ge n(r-\delta)
\]
for $\ell \in \{1, \dots, n+1-t\}$, we see from (\ref{eq:Tinftyinvtilde}) and 
Proposition \ref{prop:aA-est123}(i) that
\[
\begin{aligned}
&\sum_{s=1}^{[n\delta]} \| (T_\infty(\tilde{w})^{-1})^{n+1-s,n+1-t}\| 
\leq
 \sum_{\ell = 1}^{n+1-t} \|a_{n+1-t-\ell}\|  
 \sum_{s=1}^{[n\delta]}  \|a_{n+1-s-\ell}\|\\
&\quad\quad\quad \leq 
\left\{\sum_{k = 0}^{\infty} \|a_k\|\right\}  \sum_{u=t-[n\delta]}^{\infty}  \|a_{u}\| 
\leq \tilde{C}_2 (t-[n \delta])^{-d}
\leq \tilde{C}_2 (r-\delta)^{-d}n^{-d},
\end{aligned}
\]
where $\tilde{C}_2$ is a positive constant that depends only on $w$. 
Thus, (\ref{eq:deltabound222}) holds with $C_2 = \tilde{C}_2 (r-\delta)^{-d}$.
\end{proof}

Now, recall $\tilde{w}$ in (\ref{eq:wtilde}). Below shows the relationship between $T_n(w)^{-1}$ and $T_n(\tilde{w})^{-1}$.

\begin{proposition} \label{prop:time-reverse}
For $n \in \N$ and $s,t \in \{1, \dots, n\}$, we have 
$(T_n(w)^{-1})^{n+1-s,n+1-t} = (T_n(\tilde{w})^{-1})^{s,t}$.
\end{proposition}

\begin{proof}
Let $\tilde{\gamma}$ be the autocovariance function of $\{\tilde{X}_k\}$: 
$\tilde{\gamma}(k) 
= (2\pi)^{-1} \int_{-\pi}^{\pi} e^{-ik\theta} \tilde{w}(e^{i\theta})d\theta$. 
Since $\tilde{\gamma}(k)=\gamma(-k)$ for $k\in\Z$, 
we have, for $n \in \N$ and $s, t \in \{1,\dots,n\}$,
\[
\begin{aligned}
&\sum_{k=1}^n T_n(\tilde{w})^{s, k} (T_n(w)^{-1})^{n+1-k, n+1-t}
=\sum_{k=1}^n \tilde{\gamma}(s-k)(T_n(w)^{-1})^{n+1-k, n+1-t}\\
&=\sum_{k=1}^n \gamma(k-s)(T_n(w)^{-1})^{n+1-k, n+1-t}
=\sum_{\ell=1}^n \gamma(n+1-s-\ell)(T_n(w)^{-1})^{\ell, n+1-t}\\
&=\sum_{\ell=1}^n T_n(w)^{n+1-s,\ell}(T_n(w)^{-1})^{\ell, n+1-t}
=\delta_{n+1-s, n+1-t}I_q = \delta_{s, t}I_q.
\end{aligned}
\]
Thus, we get the desired result.
\end{proof}

Now we are ready to prove Theorems \ref{thm:Bound5} and \ref{thm:Bound6}.

\begin{proof}[Proof of Theorem \ref{thm:Bound5}]
Let $B_1$ be as in Proposition \ref{prop:Tinfinv123}. 
In this proof, we write $B_2$ and $B_3$ for positive constants which depend only on $w$. 

Since 
$t^{-d} \leq 1$ and 
$(n+1-t)^{-1} \leq (n/2)^{-1}$ for $t \in \{1, \dots, [(n+1)/2]\}$, 
it follows from Theorem \ref{thm:Bound3} that
\begin{equation}
\sum_{s=1}^{n} \| (T_n(w)^{-1})^{s,t} - (T_{\infty}(w)^{-1})^{s,t} \| \le B_2,
\quad n\in\N,\ t\in \{1, \dots, [(n+1)/2]\}.
\label{eq:diffest717}
\end{equation}
The estimates (\ref{eq:Tinfty1norm}) and (\ref{eq:diffest717}), 
together with triangle inequality, yield
\begin{equation}
\sum_{s=1}^{n} \Vert (T_n(w)^{-1})^{s,t}\| \leq B_1+B_2, 
\quad n\in \N, \ \ t \in \{1, \dots,[(n+1)/2]\}.
\label{eq:Tnwsum11}
\end{equation}

To estimate $\sum_{s=1}^{n} \Vert (T_n(w)^{-1})^{s,t}\|$ 
for $t > (n+1)/2$, we use the properties of the time-reversed process. 
Recall $\tilde{w}$ from (\ref{eq:wtilde}). 
Then, in the same way that (\ref{eq:Tnwsum11}) was obtained, we obtain
\begin{equation}
\sum_{s=1}^{n} \Vert (T_n(\tilde{w})^{-1})^{s,t}\| \leq B_3, 
\quad n\in \N, \ \ t \in \{1, \dots,[(n+1)/2]\}.
\label{eq:Tnwsum22}
\end{equation}
Next, from Proposition \ref{prop:time-reverse}, for $n \in N$ and $t \in \{1, \dots, n\}$,
\begin{equation*}
\sum_{s=1}^{n} \Vert (T_n(w)^{-1})^{s,n+1-t}\| 
= \sum_{s=1}^{n} \Vert (T_n(w)^{-1})^{n+1-s,n+1-t}\| 
= \sum_{s=1}^{n} \Vert (T_n(\tilde{w})^{-1})^{s,t}\|.
\end{equation*}
From this and (\ref{eq:Tnwsum22}), we see that
\begin{equation}
\sum_{s=1}^{n} \Vert (T_n(w)^{-1})^{s,n+1-t}\| \leq B_3, 
\quad n\in \N, \ \ t \in \{1, \dots,[(n+1)/2]\}.
\label{eq:Tnwsum33}
\end{equation}

Combining (\ref{eq:Tnwsum11}) and (\ref{eq:Tnwsum33}), we have 
$\sum_{s=1}^{n} \Vert (T_n(w)^{-1})^{s,t}\| \leq B_4$ for 
$n\in \N$ and $t\in \{1,\dots,n\}$, 
where $B_4:=\max(B_1+B_2, B_3)$. 
This and (\ref{eq:Tinfty1norm}) yield 
\begin{equation*}
C_{t,n} \leq
\sum_{s=1}^{n} \| (T_n(w)^{-1})^{s,t}\| +\sum_{s=1}^{n}\| (T_{\infty}(w)^{-1})^{s,t}\| 
\le B_1 + B_4
\end{equation*}
for $n\in \N$ and $ t\in \{1, \dots, n\}$. 
Thus (\ref{eq:col-sum3}) holds with $M_5 = B_1 + B_4$.
\end{proof}

\begin{proof}[Proof of Theorem \ref{thm:Bound6}]
Let $r := (\delta+1)/2$ so that $0< \delta < r < 1$. 
First, by Theorem \ref{thm:Bound4}, there exists $M_4(w,r) \in (0,\infty)$ such that
\begin{equation} \label{eq:deltabound1}
\sum_{s=1}^{[n\delta]} \Delta_n^{s,t} \leq C_{t,n} \leq M_4(w,r) n^{-d}, \qquad n \in \N,~~ t \in \{1, \dots, [r n]\}.
\end{equation}

Next, we assume $t \in \{[r n]+1, \dots,n\}$. Then, by using (\ref{eq:TnTtilde}), we have
\[
\begin{aligned}
\sum_{s=1}^{[n\delta]} \Delta_n^{s,t} 
&\leq 
\sum_{s=1}^{[n\delta]} \| (T_n(w)^{-1})^{s,t}\| +\sum_{s=1}^{[n\delta]}\| (T_{\infty}(w)^{-1})^{s,t}\| \\
&= \sum_{s=1}^{[n\delta]} \| (T_n(\tilde{w})^{-1})^{n+1-s,n+1-t}\| +\sum_{s=1}^{[n\delta]}\| (T_{\infty}(w)^{-1})^{s,t}\| \\
&\leq \sum_{s=1}^{[n\delta]} \| (T_n(\tilde{w})^{-1})^{n+1-s,n+1-t} - 
(T_\infty(\tilde{w})^{-1})^{n+1-s,n+1-t}\| \\
&\qquad +  \sum_{s=1}^{[n\delta]} \|(T_\infty(\tilde{w})^{-1})^{n+1-s,n+1-t}\|
+\sum_{s=1}^{[n\delta]}\| (T_{\infty}(w)^{-1})^{s,t}\| := I_1 +I_2 +I_3.
\end{aligned}
\]
First, we bound $I_1$. 
Since $\tilde{w}$ also satisfies (F$_d$) for the same $d\in (0,1/2)$ as $w$ 
and we have $n+1-t \leq n-[r n] \leq n(1-r)+1 < [n (1-\delta)]$ for large enough $n \in \N$, 
by applying Theorem \ref{thm:Bound4} to the pair $(\tilde{w},1 - \delta)$, we see that
\begin{equation} \label{eq:I1}
I_1
\leq M_4 (\tilde{w}, 1-\delta) n^{-d}, \qquad n \in \N,~~ t \in \{[r n]+1, \dots,n\},
\end{equation}
holds for some $M_4 (\tilde{w}, 1-\delta) \in (0,\infty)$. 
On the other hand, by Proposition \ref{prop:Tbound22}, $I_2$ and $I_3$ are bouned with
\begin{equation} \label{eq:I23}
I_2 \leq C_2 n^{-d} \quad \text{and} \qquad I_3 \leq C_1 n^{-d}
, \qquad n \in \N,~~ t \in \{[r n]+1, \dots,n\},
\end{equation}
respectively. 
Combining (\ref{eq:I1}) and (\ref{eq:I23}), we obtain
\begin{equation} 
\sum_{s=1}^{[n\delta]} \Delta_n^{s,t} 
\leq \{ M_4 (\tilde{w}, 1-\delta) + C_1 + C_2\}
n^{-d}, \qquad n \in \N,~~ t \in \{[r n]+1, \dots,n\}.
\label{eq:deltabound3}
\end{equation}

If we put $M_6 := \max( M_4(w,r) , M_4 (\tilde{w}, 1-\delta) + C_1 + C_2)$, 
then, by (\ref{eq:deltabound1}) and (\ref{eq:deltabound3}), 
(\ref{eq:col-sum4}) holds. 
\end{proof}

%%%%%%%%%%%%%%%%%%%%%%%%%%%%%%%%%%%%%%%%%%%%%%%%%%%%%
%%%%%%%          Section 9
%%%%%%%%%%%%%%%%%%%%%%%%%%%%%%%%%%%%%%%%%%%%%%%%%%%%

\section{Proofs of Theorems \ref{thm:B-type1} and \ref{thm:B-type2}}\label{sec:9}

For $\mathbf{y} = (y_1^\top ,y_2^\top, \dots)^\top \in \C^{\infty\times q}$ and 
$n\in\N$, we define
\[
I_n := \sum_{s=1}^{n}  \sum_{t=1}^n \Delta_n^{s,t} \|y_t\|,
\quad \mbox{and} \quad 
J_n := \sum_{s=1}^{n}  \sum_{t=n+1}^{\infty} \| (T_\infty(w)^{-1})^{s,t}\| \cdot \| y_t\|,
\]
where $\Delta_n^{s,t}$ is as in (\ref{eq:Delta}). 
By (\ref{eq:Tinvsa333}) and Proposition \ref{prop:Tinfinv123}, 
$J_n<\infty$ holds if 
$\sup_{t\in\N} \Vert y_t\Vert<\infty$, hence, in particular, if 
$\mathbf{y} \in \mathcal{A}_{\rho}^{\infty \times q}$ for $\rho \in (1-2d, \infty)$ or $\mathbf{y} \in \mathcal{B}^{\infty \times q}$.

For $d\in (0,1/2)$, we put
\begin{equation}
\xi(d) := \int_0^1 ds \int_1^{\infty} dt \int_0^s du 
\frac{1}{u^d (t-s+u)^{2+d}},
\label{eq:xi257}
\end{equation}
which can be calculated as follows:
\[
\begin{aligned}
\xi(d) 
&= \int_0^1 ds \int_1^{\infty} dt \int_0^s 
\frac{du}{u^d (t-s+u)^{2+d}}
=\frac{1}{1+d} \int_0^1 ds \int_0^s 
\frac{du}{u^d (1-s+u)^{1+d}}\\
&=\frac{1}{1+d} \int_0^1 \frac{du}{u^d} \int_u^1 
\frac{ds}{(1-s+u)^{1+d}}
= \frac{1}{(1-d^2)(1-2d)}.
\end{aligned}
\]
In particular, $\xi(d)<\infty$.

Recall the sequence $\{\tilde{a}_k\} \in \ell_1^{q\times q}$ from Section \ref{sec:4}. 
We also need the following proposition to prove Lemma \ref{lemma:I12nA} below.

\begin{proposition}\label{prop:atilde382}
We assume $(\mathrm{F}_{d})$ for $d\in (0,1/2)$. 
Then there exists $K_4\in (0,\infty)$ such that
\begin{equation}
\Vert \tilde{a}_{n+1} - \tilde{a}_n \Vert \le K_4 (n+2)^{-2-d}, 
\quad n\in\N.
\label{eq:atildediff634}
\end{equation}
\end{proposition}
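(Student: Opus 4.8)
The plan is to relate the difference $\tilde a_{n+1} - \tilde a_n$ to the Taylor coefficients of the function $(1-z)\tilde h(z)^{-1}$ and then exploit the known decay rate of the coefficients of a function of this type. Recall from (\ref{eq:Wold}) that $-\tilde h(z)^{-1} = \sum_{k\ge 0} z^k \tilde a_k$, so that
\begin{equation*}
-(1-z)\tilde h(z)^{-1} = \tilde a_0 + \sum_{k=1}^{\infty} (\tilde a_k - \tilde a_{k-1}) z^k,
\qquad z\in\D.
\end{equation*}
Thus it suffices to bound the $(n+1)$-st Taylor coefficient of the function $F(z) := (1-z)\tilde h(z)^{-1}$ by a constant times $(n+2)^{-2-d}$. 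First I would recall the structure of $\tilde h$: by definition $\tilde h(z) = \{h_\sharp(\bar z)\}^*$ and, by (\ref{eq:ratouter372}), $h_\sharp(z) = (1-z)^{-d} g_\sharp(z)$ with $g_\sharp$ satisfying (C). Hence $\tilde h(z)^{-1}$ has the form $(1-z)^{d} k(z)$ where $k$ is (the conjugate-reflection of) $g_\sharp^{-1}$, a rational matrix function whose entries are holomorphic on $\overline{\D}$ (since $\det g_\sharp$ has no zeros there and $g_\sharp$ has no poles there). Consequently $F(z) = (1-z)^{1+d} k(z)$ with $k$ holomorphic in a neighborhood of $\overline{\D}$.

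The key step is then the coefficient asymptotics for $(1-z)^{1+d} k(z)$. Writing $(1-z)^{1+d} = \sum_{j\ge 0} c_j z^j$ with $c_j = \binom{1+d}{j}(-1)^j$, a standard computation (e.g.\ via Stirling's formula, as already used implicitly in the $\ell_1$-type estimates of \cite{IKP2} quoted in Proposition \ref{prop:aA-est123}) gives $\|c_j\| = O(j^{-2-d})$ as $j\to\infty$, because the exponent $1+d$ lies in $(1,3/2)$. Since $k$ is holomorphic on a disk of radius strictly larger than $1$, its Taylor coefficients $k_j$ decay geometrically, $\|k_j\| \le C \rho^{-j}$ for some $\rho>1$. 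The $(n+1)$-st coefficient of $F$ is the convolution $\sum_{j=0}^{n+1} c_{n+1-j} k_j$, and a routine splitting of this sum (the bulk of the mass sits where $n+1-j$ is comparable to $n$, the geometric tail of $k$ being harmless) yields the bound $O((n+2)^{-2-d})$. Matching this with the coefficient identity above gives $\|\tilde a_{n+1} - \tilde a_n\| \le K_4 (n+2)^{-2-d}$ for all large $n$, and then for all $n\in\N$ after adjusting the constant over the finitely many small values.

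The main obstacle is making the convolution estimate clean: one must be careful that the $O(\cdot)$ constant in $\|c_j\| = O(j^{-2-d})$ is uniform and that the convolution of a $j^{-2-d}$-type sequence with a geometrically decaying sequence again behaves like $n^{-2-d}$ rather than, say, picking up a logarithm. This is handled by the elementary inequality that, for $\alpha>1$, $\sum_{j=0}^{n}(n+1-j)^{-\alpha}\rho^{-j} = O((n+1)^{-\alpha})$ when $\rho>1$ (split at $j \le n/2$ and $j > n/2$). Everything else — the factorization of $\tilde h^{-1}$, the holomorphy of $k$ on an enlarged disk, and the passage from coefficients of $(1-z)\tilde h^{-1}$ to the difference sequence — is bookkeeping that parallels the already-cited estimates from \cite{IKP2}. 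Alternatively, and perhaps more in the spirit of the paper, one could deduce (\ref{eq:atildediff634}) directly from (6.23)--(6.24) of \cite{IKP2} by the same argument that gave Proposition \ref{prop:aA-est123}(i), applied to the "one-step-differenced" generating function; I would present whichever of these two routes turns out to require the least new machinery.
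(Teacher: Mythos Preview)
Your argument is correct and complete. The factorisation $\tilde h(z)^{-1}=(1-z)^d k(z)$ with $k(z)=\{g_\sharp(\bar z)^{-1}\}^*$ holomorphic on a disk of radius $>1$ is right, the binomial asymptotic $(-1)^j\binom{1+d}{j}\sim \Gamma(-1-d)^{-1}j^{-2-d}$ gives the $O(j^{-2-d})$ decay for the coefficients of $(1-z)^{1+d}$, and your convolution splitting at $j=n/2$ legitimately produces the $O((n+2)^{-2-d})$ bound with no logarithmic loss.

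The paper takes a shorter but less self-contained route: it simply quotes \cite[Lemma 2.2]{I02}, which already records the sharper asymptotic
\[
n^{2+d}(\tilde a_n-\tilde a_{n-1})\;\longrightarrow\;-\frac{1}{\Gamma(-1-d)}\{g_\sharp(1)^*\}^{-1},
\]
and reads off the bound from that. The underlying mathematics is the same as yours---that cited lemma is proved exactly by the generating-function/convolution argument you outline---so the difference is only in packaging. Your version has the merit of being self-contained and making clear \emph{why} the exponent is $2+d$ (one extra factor of $(1-z)$ on top of the $(1-z)^d$ already present in $\tilde h^{-1}$); the paper's version is a two-line citation that additionally identifies the leading constant. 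Your closing remark that one could alternatively argue ``in the spirit of the paper'' by citing existing estimates is on target, though note the paper invokes \cite{I02} here rather than \cite{IKP2}.
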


\begin{proof}
Recall $g_{\sharp}$ from Section \ref{sec:4}. 
By \cite[Lemma 2.2]{I02}, 
\[
\left\Vert n^{2+d}(\tilde{a}_n - \tilde{a}_{n-1}) + \frac{1}{\Gamma(-1-d)}\{g_{\sharp}(1)^*\}^{-1} \right\Vert
= O(n^{-1}),
\quad n\to\infty
\]
In particular, 
$\lim_{n\to\infty} n^{2+d}\Vert \tilde{a}_n - \tilde{a}_{n-1} \Vert 
= \Vert \{g_{\sharp}(1)^*\}^{-1}\Vert / \Gamma(-1-d)$. 
This implies the desired assertion. 
\end{proof}

\begin{lemma}\label{lemma:I12nA}
We assume (F$_d$) for $d\in (0,1/2)$. 
Let $\mathbf{y} \in \mathcal{A}_{\rho}^{\infty \times q}$ for $\rho \in (1-2d, \infty)$. 
Then the following two assertions hold:
\begin{align}
I_n &=
\begin{cases}
O(n^{1-2d-\rho}) &(1-2d<\rho<1-d),\\
O(n^{-d}\log n) &(\rho=1-d),\\
O(n^{-d}) &(\rho>1-d),
\end{cases}
\quad n\to\infty,
\label{eq:Arhobound-1}
\\
J_n &= O(n^{1-2d-\rho}), \quad n\to\infty.
\label{eq:Arhobound-2}
\end{align} 
\end{lemma}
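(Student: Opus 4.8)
The plan is to estimate $I_n$ and $J_n$ separately, using the column-sum bounds of Theorem \ref{thm:1} for the former and the closed form \eqref{eq:Tinftyinv973} together with Proposition \ref{prop:aA-est123}(i) for the latter. Throughout, write $\|y_t\|\le c\,t^{-\rho}$ for a constant $c$ coming from $\mathbf{y}\in\mathcal{A}_\rho^{\infty\times q}$.

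For $I_n$, first fix $\delta\in(0,1)$ and split the inner sum over $t$ at $[\delta n]$:
\[
I_n=\sum_{t=1}^{n}\Bigl(\sum_{s=1}^n\Delta_n^{s,t}\Bigr)\|y_t\|=\sum_{t=1}^{n}C_{t,n}\|y_t\|
=\sum_{t=1}^{[\delta n]}C_{t,n}\|y_t\|+\sum_{t=[\delta n]+1}^{n}C_{t,n}\|y_t\|.
\]
For the first piece, Theorem \ref{thm:1} gives $C_{t,n}\le Mn^{-d}t^{-d}$, so this piece is $\le cMn^{-d}\sum_{t=1}^{[\delta n]}t^{-d-\rho}$; since $d+\rho>d+(1-2d)=1-d>0$, the behaviour of $\sum_{t\ge1}t^{-d-\rho}$ depends on whether $d+\rho>1$, $=1$, or $<1$, i.e.\ on whether $\rho>1-d$, $\rho=1-d$, or $\rho<1-d$. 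In the convergent case ($\rho>1-d$) we get $O(n^{-d})$; in the borderline case ($\rho=1-d$) the partial sum is $O(\log n)$, giving $O(n^{-d}\log n)$; in the divergent case ($1-2d<\rho<1-d$) the partial sum is $O(n^{1-d-\rho})$, giving $O(n^{-d}\cdot n^{1-d-\rho})=O(n^{1-2d-\rho})$. For the second piece, use $C_{t,n}\le Mn^{(1/2)-d}(n+1-t)^{-(1/2)-d}$ and $\|y_t\|\le c\,t^{-\rho}\le c([\delta n])^{-\rho}\asymp n^{-\rho}$ on this range, so the piece is $\lesssim n^{(1/2)-d-\rho}\sum_{t=[\delta n]+1}^{n}(n+1-t)^{-(1/2)-d}\lesssim n^{(1/2)-d-\rho}\cdot n^{(1/2)-d}=n^{1-2d-\rho}$, which is dominated by the first-piece bounds in every case (since $1-2d-\rho<-d$ when $\rho>1-d$, etc.). Combining the two pieces yields \eqref{eq:Arhobound-1}.

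For $J_n$, start from \eqref{eq:Tinvsa333} to write $\sum_{s=1}^\infty\|(T_\infty(w)^{-1})^{s,t}\|=\sum_{s=1}^\infty\|(T_\infty(w)^{-1})^{t,s}\|$, and then use \eqref{eq:Tinftyinv973}, which expresses $(T_\infty(w)^{-1})^{t,s}$ as $\sum_{\ell=1}^{t\wedge s}\tilde a_{t-\ell}^*\tilde a_{s-\ell}$. Exactly as in the proof of Proposition \ref{prop:Tinfinv123}, summing over $s\ge t+1$ (so $t\wedge s=t$) gives, for each $t$,
\[
\sum_{s=t+1}^{\infty}\|(T_\infty(w)^{-1})^{t,s}\|
\le\sum_{\ell=1}^{t}\|\tilde a_{t-\ell}\|\sum_{s=t+1}^{\infty}\|\tilde a_{s-\ell}\|
\le\Bigl(\sum_{k=0}^{\infty}\|\tilde a_k\|\Bigr)\sum_{u=t-t+1}^{\infty}\ \text{(shifted tail)},
\]
but in fact the relevant bound is cruder: $\sum_{s=1}^\infty\|(T_\infty(w)^{-1})^{s,t}\|\le B_1$ uniformly in $t$ by Proposition \ref{prop:Tinfinv123}, so naively $J_n\le B_1\sum_{t=n+1}^\infty\|y_t\|\lesssim\sum_{t>n}t^{-\rho}$, which is only $O(n^{1-\rho})$, not $O(n^{1-2d-\rho})$. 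To get the sharper exponent one must exploit that for $t>n\ge s$ one has $t\wedge s=s$, so $(T_\infty(w)^{-1})^{s,t}=\sum_{\ell=1}^{s}\tilde a_{s-\ell}^*\tilde a_{t-\ell}$, whence $\|(T_\infty(w)^{-1})^{s,t}\|\le(\sum_k\|\tilde a_k\|)\max_{1\le\ell\le s}\|\tilde a_{t-\ell}\|$ and, by Proposition \ref{prop:aA-est123}(i), $\|\tilde a_{t-\ell}\|\le K_1(t-\ell+1)^{-1-d}\le K_1(t-s+1)^{-1-d}$ since $\ell\le s$. Therefore
\[
J_n\le K_1\Bigl(\sum_{k}\|\tilde a_k\|\Bigr)\sum_{s=1}^{n}\sum_{t=n+1}^{\infty}(t-s+1)^{-1-d}\|y_t\|
\lesssim\sum_{t=n+1}^{\infty}t^{-\rho}\sum_{s=1}^{n}(t-s+1)^{-1-d}.
\]
For fixed $t>n$, $\sum_{s=1}^{n}(t-s+1)^{-1-d}\le\sum_{u\ge t-n+1}u^{-1-d}\lesssim(t-n)^{-d}$, so $J_n\lesssim\sum_{t>n}t^{-\rho}(t-n)^{-d}$. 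Bounding this last sum by an integral and substituting $t=n(1+x)$ gives $J_n\lesssim n^{1-\rho-d}\int_0^\infty(1+x)^{-\rho}x^{-d}\,dx\cdot$(correction), but the integral converges only when $\rho+d>1$; in general one splits at $t=2n$: for $n<t\le 2n$ use $t^{-\rho}\asymp n^{-\rho}$ and $\sum_{n<t\le2n}(t-n)^{-d}\lesssim n^{1-d}$, giving $O(n^{1-\rho-d})$ — wait, this is still not $O(n^{1-2d-\rho})$. Here I would instead use the \emph{two-sided} decay and split the double sum diagonally, grouping by $u=t-s$: writing $u=t-s\ge1$, the contribution is $\sum_{u\ge1}u^{-1-d}\sum_{s=1}^n\|y_{s+u}\|\mathbf 1_{s+u>n}$, and $\sum_{s=1}^n\|y_{s+u}\|\lesssim\sum_{k>n}k^{-\rho}\lesssim n^{1-\rho}$ when $u\le n$ while for $u>n$ the inner sum over the at most $n$ terms $\|y_{s+u}\|\le(u)^{-\rho}$ is $\lesssim n\,u^{-\rho}$; thus $J_n\lesssim n^{1-\rho}\sum_{u=1}^n u^{-1-d}+n\sum_{u>n}u^{-1-d-\rho}\lesssim n^{1-\rho}+n\cdot n^{-d-\rho}=O(n^{1-\rho})+O(n^{1-d-\rho})$. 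The main obstacle is precisely pinning down this $J_n$ bound to the claimed $O(n^{1-2d-\rho})$: the gain of the extra $n^{-d}$ must come from combining the $(t-\ell+1)^{-1-d}$ decay of $\tilde a$ with a second summation-by-parts using $\tilde A_k$ and Proposition \ref{prop:atilde382}, exactly in the style of \eqref{eq:key-estimate} and Proposition \ref{prop:sum-by-parts134} — i.e.\ one replaces $\sum_\ell\tilde a_{s-\ell}^*(\cdots)$ by a telescoped sum involving $\tilde A_{s}$, which decays like $s^{-d}$ and supplies the missing factor — and then bounds the resulting sum via the Hilbert-operator machinery of Section \ref{sec:6}; I would follow the argument of Lemma \ref{lemma:Sestim321b} mutatis mutandis. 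Once \eqref{eq:Arhobound-2} is in hand, note $1-2d-\rho<-d$ precisely when $\rho>1-d$ and $=-d$ when $\rho=1-d$, so in all three regimes of \eqref{eq:Arhobound-1} the term $J_n$ is absorbed, which is what will matter when these two lemmas are combined in the proof of Theorem \ref{thm:B-type1}.
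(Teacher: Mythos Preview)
Your treatment of $I_n$ is correct and matches the paper's argument line for line: split at $[\delta n]$, apply the two bounds of Theorem~\ref{thm:1}, and read off the three regimes of $\rho$ relative to $1-d$.

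For $J_n$ there is a genuine gap: you correctly discover that the naive bounds give only $O(n^{1-\rho})$ or $O(n^{1-d-\rho})$, and you correctly identify the fix --- summation by parts replacing $\tilde a$ by $\tilde A$, together with the second-difference bound of Proposition~\ref{prop:atilde382} --- but you do not carry it out. Your pointer to ``the Hilbert-operator machinery of Section~\ref{sec:6}'' and Lemma~\ref{lemma:Sestim321b} is a detour; the actual computation is elementary. For $s\le t$, summation by parts (exactly the manipulation of Proposition~\ref{prop:sum-by-parts134}, applied to $\sum_{u=0}^{s-1}\tilde a_u^*\tilde a_{t-s+u}$) gives
\[
(T_\infty(w)^{-1})^{s,t}=\tilde A_s^*\,\tilde a_{t-1}-\sum_{u=0}^{s-2}\tilde A_{u+1}^*\bigl(\tilde a_{t-s+u+1}-\tilde a_{t-s+u}\bigr),
\]
so $J_n\le J_{1,n}+J_{2,n}$ with the obvious two pieces. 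For $J_{1,n}$, Proposition~\ref{prop:aA-est123} yields $\sum_{s=1}^n\|\tilde A_s\|=O(n^{1-d})$ and $\sum_{t>n}\|\tilde a_{t-1}\|\,\|y_t\|=O\bigl(\sum_{t>n}t^{-1-d-\rho}\bigr)=O(n^{-d-\rho})$, hence $J_{1,n}=O(n^{1-2d-\rho})$. For $J_{2,n}$, bound $\|y_t\|\le R(n+1)^{-\rho}$ uniformly for $t>n$, use $\|\tilde A_{u+1}\|\lesssim(u+1)^{-d}$ and Proposition~\ref{prop:atilde382} for $\|\tilde a_{t-s+u+1}-\tilde a_{t-s+u}\|\lesssim(t-s+u+2)^{-2-d}$; after rescaling, the remaining triple sum is $n^{1-2d}$ times the finite elementary integral $\xi(d)$ of~\eqref{eq:xi257}. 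No operator norms enter: the missing factor $n^{-d}$ comes once from $\|\tilde A_s\|\lesssim s^{-d}$ in $J_{1,n}$ and once from the extra $-d$ in the exponent of the second-difference bound in $J_{2,n}$.
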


\begin{proof}
Since $\mathbf{y} \in \mathcal{A}_{\rho}^{\infty \times q}$ with $\rho \in (1-2d, \infty)$, 
there exists $R\in (0,\infty)$ such that
\begin{equation}
\|y_t\| \leq  R t^{-\rho}, \quad t \in \N.
\label{eq:yest242}
\end{equation}

We first prove (\ref{eq:Arhobound-1}). 
Recall $C_{t,n}$ from (\ref{eq:RCn-a}). Fix $\delta \in (0,1)$. 
Then, by (\ref{eq:yest242}) and Theorems \ref{thm:Bound3} and \ref{thm:Bound4}, 
we have $I_n = \sum_{t=1}^{n} C_{t,n} \|y_t\| = O(I_{1,n}) + O( I_{2,n})$ as $n\to\infty$, 
where 
$I_{1,n} := n^{-d} \sum_{t=1}^{[\delta n]} t^{-d-\rho}$ and 
$I_{2,n} := n^{(1/2)-d} \sum_{t=[\delta n]+1}^{n} (n+1-t)^{-(1/2)-d} t^{-\rho}$. 
It is straightforward to see that (\ref{eq:Arhobound-1}) with $I_n$ replaced by 
$I_{1,n}$ holds. 
On the other hand, since $1/t \leq 1/(\delta n)$ for 
$t \in \{[\delta n]+1, \dots,n\}$, $n^{-(1/2)+d} (\delta n)^{\rho} I_{2,n}$ 
is bounded by
\[
\sum_{t=[\delta n]+1}^{n}  (n+1-t)^{-(1/2)-d} \\
= \sum_{t=1}^{[\delta n]}  t^{-(1/2)-d} 
=O(n^{(1/2)-d}), 
\]
hence $I_{2,n} = O(n^{1-2d-\rho})$ as $n\to\infty$.
Combining, we obtain (\ref{eq:Arhobound-1}).

Next, we prove (\ref{eq:Arhobound-2}). Recall $\tilde{A}_k$ from (\ref{eq:tildeA656}). 
As in the proof of Proposition \ref{prop:sum-by-parts134}, 
by (\ref{eq:Tinftyinv973}), Proposition \ref{prop:aA-est123} (iii) and summation by parts, 
$(T_{\infty}(w)^{-1})^{s,t}$ is equal to 
\[
\sum_{\ell=1}^s \tilde{a}_{s-\ell}^* \tilde{a}_{t-\ell}
= \sum_{u=0}^{s-1} \tilde{a}_u^* \tilde{a}_{t-s+u}
= 
\tilde{A}_s^* \tilde{a}_{t-1}
- \sum_{u=0}^{s-2} \tilde{A}_{u+1}^* \left( \tilde{a}_{t-s+u+1} - \tilde{a}_{t-s+u}\right)
\]
for $s\le t$, 
hence 
$J_n \le J_{1,n} + J_{2,n}$ for $n\in\N$, 
where
\[
J_{1,n}:= \left( \sum_{s=1}^n \Vert \tilde{A}_s \Vert\right) 
\sum_{t=n+1}^{\infty} \Vert \tilde{a}_{t-1} \Vert \cdot \Vert y_t \Vert 
\ \ \mbox{and}\ \ 
J_{2,n}:= \sum_{s=1}^n \sum_{t=n+1}^{\infty} \sum_{u=0}^{s-2}
\Vert \tilde{A}_{u+1}\Vert \cdot \Vert \tilde{a}_{t-s+u+1} - \tilde{a}_{t-s+u} \Vert 
\cdot \Vert y_t \Vert.
\]

By Proposition \ref{prop:aA-est123} and (\ref{eq:yest242}), 
we have $J_{1,n}
=O( \sum_{s=1}^n s^{-d})O( \sum_{t=n+1}^{\infty} t^{-1-d-\rho})
=O(n^{1-2d-\rho})$ as $n\to\infty$. 
On the other hand, 
by (\ref{eq:yest242}) and Propositions \ref{prop:aA-est123} 
and \ref{prop:atilde382}, 
\[
\begin{aligned}
&\frac{J_{2,n}(n+1)^{\rho}}{K_2K_4R}
\le 
\sum_{s=0}^{n-1} \sum_{t=n}^{\infty} \sum_{u=0}^{s-1} \frac{1}{(u+1)^d(t-s+u+2)^{2+d}}\\
&= 
\int_0^n ds \int_n^{\infty} dt \int_0^{[s]} \frac{du}{([u]+1)^d([t]-[s]+[u]+2)^{2+d}}\\
&=n^3 
\int_0^1 ds \int_1^{\infty} dt \int_0^{[ns]/n} \frac{du}{([nu]+1)^d([nt]-[ns]+[nu]+2)^{2+d}}
\le n^{1-2d} \xi(d),
\end{aligned}
\]
hence 
$J_{2,n} = O(n^{1-2d-\rho})$ as $n\to\infty$. 
Combining, we obtain (\ref{eq:Arhobound-2}).
\end{proof}

\begin{lemma}\label{lemma:I12nB}
We assume (F$_d$) for $d\in (0,1/2)$. Let $\mathbf{y} \in \mathcal{B}^{\infty \times q}$. 
Then, for any $\kappa \in (0,d/(1-d))$, 
the following two assertions hold: as $n\to\infty$,
\begin{equation}
I_n = O (n^{-d+\kappa(1-d)} ) + O\left( \sum_{t=[n^{\kappa}]+1}^{n} \|y_t\|\right)
\quad \mbox{and}\quad 
J_n = O\left( \sum_{t=n+1}^{\infty} \|y_t\|\right), \quad n \to \infty.
\label{eq:Bbound-2}
\end{equation} 
\end{lemma}

\begin{proof}
In this proof, $C$ denotes a generic positive constant that can vary from line to line.

The second assertion in (\ref{eq:Bbound-2}) follows from Proposition \ref{prop:Tinfinv123}. 
Since $\kappa <1$, we have, for large enough $n\in\N$, $n^{\kappa} < n/2$, 
hence, by Theorem \ref{thm:Bound4} with $\delta = 1/2$, 
$C_{t,n} \leq C n^{-d} t^{-d}$ holds for $t \in \{1, \dots, [n^{\kappa}]\}$. 
Moreover, $\sup_{t \in \N} \|y_t\| < \infty$. Therefore, by Theorem \ref{thm:Bound5},
\[
\begin{aligned}
I_n &= \sum_{t=1}^{n} C_{t,n} \|y_t\|
\leq
C \sum_{t=1}^{[n^{\kappa}]} n^{-d} t ^{-d} \|y_t\| 
+ C \sum_{t=[n^{\kappa}]+1}^{n} \|y_t\|\\
&\leq C n^{-d} \sum_{t=1}^{[n^{\kappa}]} t ^{-d}
+ C \sum_{t=[n^{\kappa}]+1}^{n} \|y_t\|
=O (n^{-d+\kappa(1-d)} ) + O\left( \sum_{t=[n^{\kappa}]+1}^{n} \|y_t\|\right)
\end{aligned}
\]
as $n \to \infty$. Thus the first assertion in (\ref{eq:Bbound-2}) follows. 
\end{proof}

Now we are ready to prove Theorems \ref{thm:B-type1} and \ref{thm:B-type2}.

\begin{proof}[Proofs of Theorems \ref{thm:B-type1} and \ref{thm:B-type2}]
The solutions 
$\mathbf{z}_n = (z_{1,n}^\top, \dots, z_{n,n}^\top)^\top \in \C^{nq\times q}$ 
of (\ref{eq:tsysn123}) 
and 
$\mathbf{z} = (z_1^\top, z_2^\top, \dots)^\top \in \C^{\infty\times q} $
of (\ref{eq:tsysinf123}) 
are given by
\[
z_{s,n} =  \sum_{t=1}^{n} (T_{n}(w)^{-1})^{s,t} y_t, \quad s \in \{1, \dots, n\}
\quad \mbox{and}\quad 
z_s = \sum_{t=1}^{\infty} (T_\infty(w)^{-1})^{s,t} y_t, \quad s \in \N,
\]
respectively. We thus have
$
\|z_s- z_{s,n}\|
\le 
\sum_{t=1}^{n} \Delta_n^{s,t} \|y_t\| + 
\sum_{t=n+1}^{\infty} \| (T_\infty(w)^{-1})^{s,t}\| \cdot \| y_t\|
$
 for $s \in \{1,\dots,n\}$, 
hence 
$\sum_{s=1}^{n} \|z_s- z_{s,n}\| \leq I_n + J_n$. 
Therefore, Theorems \ref{thm:B-type1} and \ref{thm:B-type2} follow from 
Lemmas \ref{lemma:I12nA} and \ref{lemma:I12nB}, respectively.
\end{proof}

\section{Proof of Theorem \ref{thm:Omega}}\label{sec:10}
Since $T_n(w)^{-1}$  and $T_n(\tilde{w})^{-1}$ are Hermitian, so is $\Omega_{n,\delta}(w)$. This shows (i).

To prove (ii), we define the column sums
\begin{equation*}
D_{t,n} := \sum_{s=1}^{n} \Vert (T_n(w)^{-1})^{s,t} - (\Omega_{n,\delta}(w))^{s,t}\|, \qquad 
t \in \{1, \dots ,n\} \quad n\in \N.
\end{equation*} We will bound $D_{t,n}$ for each $t \in \{1, \dots, n\}$. 
Recall $H_{\delta}(n)$ and $T_{\delta}(n)$ from (\ref{eq:AdeltaBdelta}).

\vspace{1em}

\noindent {\bf Case 1:} $t \in H_{\delta}(n)$. \hspace{0.1em} In this case, we have
\[
\begin{aligned}
D_{t,n} &\leq \sum_{s \in H_{\delta}(n)} \Vert (T_n(w)^{-1})^{s,t} - (T_\infty(w)^{-1})^{s,t}\|
+ \frac{1}{2} \sum_{s \in H_{\delta}(n)^{\text{c}}} \Vert (T_n(w)^{-1})^{s,t} - (T_\infty(w)^{-1})^{s,t}\| \\
&\qquad\qquad 
+\frac{1}{2} \sum_{s \in H_{\delta}(n)^{\text{c}}} \Vert (T_n(w)^{-1})^{s,t} - (T_\infty(\tilde{w})^{-1})^{n+1-s,n+1-t}\|
=: E_{1,1} + E_{1,2} + E_{1,3}.
\end{aligned}
\]
We bound each term above. First, by Theorem \ref{thm:Bound4}, we have
\begin{equation} \label{eq:E1bound01}
E_{1,1} + E_{1,2} \leq \sum_{s=1}^{n} \Vert (T_n(w)^{-1})^{s,t} - (T_\infty(w)^{-1})^{s,t}\|
\leq M_4 n^{-d}, \quad n \in \N, ~~ t \in H_\delta (n).
\end{equation}
Next, we use (\ref{eq:TnTtilde}) to obtain
\[
\begin{aligned}
E_{1,3} &= \frac{1}{2} \sum_{s \in H_{\delta}(n)^{\text{c}}} \Vert (T_n(\tilde{w})^{-1})^{n+1-s,n+1-t} - (T_\infty(\tilde{w})^{-1})^{n+1-s,n+1-t}\| \\
&= \frac{1}{2} \sum_{s \in H_\delta(n)} \Vert (T_n(\tilde{w})^{-1})^{s,n+1-t} - (T_\infty(\tilde{w})^{-1})^{s,n+1-t}\|.
\end{aligned}
\]
From this and Theorem \ref{thm:Bound6} applied to $\tilde{w}$, we have
\begin{equation}\label{eq:E1bound02}
2E_{1,3} \leq \tilde{M}_6 n^{-d}, \qquad n \in \N, ~~ t \in \{1, \dots, n\}.
\end{equation}
Combining (\ref{eq:E1bound01}) and (\ref{eq:E1bound02}), we see that $D_{t,n}$ is bounded with
\begin{equation} \label{eq:Dtnbound1}
D_{t,n} \leq \{M_4 + (\tilde{M}_6/2)\} n^{-d}, \qquad n \in \N, ~~ t \in H_\delta (n).
\end{equation}

\vspace{1em}

\noindent {\bf Case 2:} $t \in T_{\delta}(n) = \{n-[\delta n]+1, \dots,n\}$. \hspace{0.1em} 
In a similar way as above, we can show
\begin{equation}\label{eq:Dtnbound2}
D_{t,n} \leq \{\tilde{M}_4 + (M_6/2)\} n^{-d}, \qquad n \in \N, ~~ t \in T_\delta (n).
\end{equation}

\vspace{1em}

\noindent {\bf Case 3:} $t \in I_{\delta}(n):= \{[\delta n]+1, \dots, n-[\delta n]\}$. \hspace{0.1em} 
By using (\ref{eq:TnTtilde}), we have
\[
\begin{aligned}
2D_{t,n} 
&\leq \sum_{s =1}^{n} \| (T_n(w)^{-1})^{s,t} - (T_\infty(w)^{-1})^{s,t}\|
+ \sum_{s =1}^{n} \| (T_n(w)^{-1})^{s,t} - (T_\infty (\tilde{w})^{-1})^{n+1-s,n+1-t}\| \\
&= C_{t,n} + \widetilde{C}_{n+1-t,n},
\end{aligned}
\]
where $C_{t,n}$ is defined as in (\ref{eq:RCn-a}) and $\widetilde{C}_{t,n}$ is defined 
similarly, replacing $w$ with $\tilde{w}$. 
We note that for large enough $n \in \N$, $t \in I_{\delta}(n)$ implies $t \in H_{\tilde{\delta}}(n)$ and $n+1-t \in H_{\tilde{\delta}}(n)$, where $\tilde{\delta} = 1-(\delta/2)$. 
Therefore, by using Theorem \ref{thm:Bound4} for both $(w,\tilde{\delta})$ and 
$(\tilde{w},\tilde{\delta})$, we have, for large enough $n\in \N$, 
\begin{equation} \label{eq:Dtnbound3}
D_{t,n} \leq (1/2)\{C_{t,n}+ \widetilde{C}_{n+1-t,n}\} \leq \{  (M_4/2) + (\tilde{M}_4/2)\}n^{-d}, \qquad t \in I_{\delta}(n).
\end{equation}

Since $\delta$ is fixed, combining (\ref{eq:Dtnbound1}), (\ref{eq:Dtnbound2}), and (\ref{eq:Dtnbound3}), 
\begin{equation} \label{eq:TOmegabound}
\Vert (T_n(w)^{-1})^{s,t} - (\Omega_{n,\delta}(w))^{s,t}\|_{1,\text{block}}^{q\times q} =
\max_{1\leq t \leq n} D_{t,n} = O(n^{-d}), \quad n \to \infty.
\end{equation}
Moreover, since $T_n(w)^{-1}$ and $\Omega_{n,\delta}(w)$ are Hermitian, the above is also true with $\| \cdot\|_{\infty, \text{block}}^{q\times q}$ norm. Thus, we get the desired results. \hfill $\Box$

\section*{Acknowledgements}
Junho Yang gratefully acknowledges the support of Taiwan's National Science and Technology Council (grant 110-2118-M-001-014-MY3).

\bibliographystyle{plainnat}
\bibliography{bib_toeplitz}

\begin{thebibliography}{24}
\providecommand{\natexlab}[1]{#1}
\providecommand{\url}[1]{\texttt{#1}}
\expandafter\ifx\csname urlstyle\endcsname\relax
  \providecommand{\doi}[1]{doi: #1}\else
  \providecommand{\doi}{doi: \begingroup \urlstyle{rm}\Url}\fi

\bibitem[Baxter(1962)]{p:bax-62}
G.~Baxter.
\newblock An asymptotic result for the finite predictor.
\newblock \emph{Math. Scand.}, 10:\penalty0 137--144, 1962.

\bibitem[Beran et~al.(2013)Beran, Feng, Ghosh, and Kulik]{b:ber-13}
J.~Beran, Y.~Feng, S.~Ghosh, and R.~Kulik.
\newblock \emph{Long-{M}emory {P}rocesses}.
\newblock Springer, New York City, NY, 2013.

\bibitem[Brockwell and Davis(2006)]{b:bro-dav-06}
P.~J. Brockwell and R.~A. Davis.
\newblock \emph{Time series: theory and methods}.
\newblock Springer Series in Statistics. Springer, New York, NY, 2006.
\newblock Second edition.

\bibitem[Cheng and Pourahmadi(1993)]{CP}
R.~Cheng and M.~Pourahmadi.
\newblock Baxter's inequality and convergence of finite predictors of
  multivariate stochastic processess.
\newblock \emph{Probab. Theory Related Fields}, 95\penalty0 (1):\penalty0
  115--124, 1993.

\bibitem[Dahlhaus(1989)]{p:dah-89}
R.~Dahlhaus.
\newblock Efficient parameter estimation for self-similar processes.
\newblock \emph{Ann. Statist.}, 17\penalty0 (4):\penalty0 1749--1766, 1989.

\bibitem[Fox and Taqqu(1987)]{p:fox-87}
R.~Fox and M.~S. Taqqu.
\newblock Central limit theorems for quadratic forms in random variables having
  long-range dependence.
\newblock \emph{Probab. theory Relat. Fields}, 74\penalty0 (2):\penalty0
  213--240, 1987.

\bibitem[Galbraith and Galbraith(1974)]{p:gal-74}
R.~F. Galbraith and J.~I. Galbraith.
\newblock On the inverses of some patterned matrices arising in the theory of
  stationary time series.
\newblock \emph{J. Appl. Probab.}, 11\penalty0 (1):\penalty0 63--71, 1974.

\bibitem[Gray(2006)]{p:gray-06}
R.~M. Gray.
\newblock Toeplitz and circulant matrices: {A} review.
\newblock \emph{Foundations and Trends{\textregistered} in Communications and
  Information Theory}, 2\penalty0 (3):\penalty0 155--239, 2006.

\bibitem[Grenander and Szeg{\"o}(1958)]{b:gre-58}
U.~Grenander and G.~Szeg{\"o}.
\newblock \emph{Toeplitz {F}orms and {T}heir {A}pplications}.
\newblock University of California Press, Berkeley, 1958.

\bibitem[Hannan and Deistler(2012)]{HD}
E.~J. Hannan and M.~Deistler.
\newblock \emph{The statistical theory of linear systems}.
\newblock SIAM, 2012.

\bibitem[Hardy et~al.(1959)Hardy, Littlewood, and P{\'o}lya]{p:har-59}
G.~H. Hardy, J.~E. Littlewood, and G.~P{\'o}lya.
\newblock \emph{Inequalities}.
\newblock Cambridge university press, Cambridge, MA, 1959.
\newblock Second edition.

\bibitem[Ing et~al.(2016)Ing, Chiou, and Guo]{p:ing-16}
C.-K. Ing, H.-T. Chiou, and M.~Guo.
\newblock Estimation of inverse autocovariance matrices for long memory
  processes.
\newblock \emph{Bernoulli}, 22\penalty0 (3):\penalty0 1301--1330, 2016.

\bibitem[Inoue(2000)]{I00}
A.~Inoue.
\newblock Asymptotics for the partial autocorrelation function of a stationary
  process.
\newblock \emph{J. Anal. Math.}, 81:\penalty0 65--109, 2000.

\bibitem[Inoue(2002)]{I02}
A.~Inoue.
\newblock Asymptotic behavior for partial autocorrelation functions of
  fractional {ARIMA} processes.
\newblock \emph{Ann. Appl. Probab.}, 12\penalty0 (4):\penalty0 1471--1491,
  2002.

\bibitem[Inoue(2008)]{I08}
A.~Inoue.
\newblock {AR} and {MA} representation of partial autocorrelation functions,
  with applications.
\newblock \emph{Probab. theory Relat. Fields}, 140\penalty0 (3-4):\penalty0
  523--551, 2008.

\bibitem[Inoue(2023)]{I23}
A.~Inoue.
\newblock Explicit formulas for the inverses of {T}oeplitz matrices, with
  applications.
\newblock \emph{Probab. theory Relat. Fields}, 185\penalty0 (1-2):\penalty0
  513--552, 2023.

\bibitem[Inoue and Kasahara(2006)]{p:ino-kas-06}
A.~Inoue and Y.~Kasahara.
\newblock Explicit representation of finite predictor coefficients and its
  applications.
\newblock \emph{Ann. Statist.}, 34\penalty0 (2):\penalty0 973--993, 2006.

\bibitem[Inoue et~al.(2018)Inoue, Kasahara, and Pourahmadi]{IKP2}
A.~Inoue, Y.~Kasahara, and M.~Pourahmadi.
\newblock Baxter’s inequality for finite predictor coefficients of
  multivariate long-memory stationary processes.
\newblock \emph{Bernoulli}, 24\penalty0 (2):\penalty0 1202--1232, 2018.

\bibitem[Katsnelson and Kirstein(1997)]{KK}
V.~E. Katsnelson and B.~Kirstein.
\newblock On the theory of matrix-valued functions belonging to the smirnov
  class.
\newblock In \emph{Topics in interpolation theory}, pages 299--350. Springer,
  1997.

\bibitem[Kreiss et~al.(2011)Kreiss, Paparoditis, and Politis]{p:kre-11}
J.-P. Kreiss, E.~Paparoditis, and D.~N. Politis.
\newblock On the range of validity of the autoregressive sieve bootstrap.
\newblock \emph{Ann. Statist.}, 39\penalty0 (4):\penalty0 2103--2130, 2011.

\bibitem[Pourahmadi(1989)]{p:pou-89}
M.~Pourahmadi.
\newblock On the convergence of finite linear predictors of stationary
  processes.
\newblock \emph{J. Multivariate Anal.}, 30\penalty0 (2):\penalty0 167--180,
  1989.

\bibitem[Shaman(1975)]{p:sha-75}
P.~Shaman.
\newblock An approximate inverse for the covariance matrix of moving average
  and autoregressive processes.
\newblock \emph{Ann. Statist.}, 3\penalty0 (2):\penalty0 532--538, 1975.

\bibitem[Wiener(1949)]{b:wie-49}
N.~Wiener.
\newblock \emph{Extrapolation, {I}nterpolation, and {S}moothing of {S}tationary
  {T}ime {S}eries}.
\newblock MIT Press, Cambridge, MA, 1949.

\bibitem[Yang(2024)]{p:yang-24}
J.~Yang.
\newblock On the asymptotic behavior of a finite section of the optimal causal
  filter.
\newblock \emph{Bernoulli (To appear)}, 2024.

\end{thebibliography}
\end{document}